\newtheorem{theo}{Theorem}[section]
\newtheorem{cor}[theo]{Corollary}
\newtheorem{lem}[theo]{Lemma}
\newtheorem{prop}[theo]{Proposition}
\theoremstyle{remark}
\newtheorem{dfn}[theo]{\bf Definition}
\newtheorem{obs}[theo]{\bf Remark}
\newcommand\R{\text{I\!R}}
\newcommand\e{\epsilon}
\newcommand\de{\delta}
\newcommand\be{\beta}
\newcommand{\fr}{\partial}
\newcommand{\equ}{\eqref}
\newcommand{\grad}{\nabla}
\newcommand{\ml}{\mathcal}
\newcommand{\DD}{\ml{D}}
\newcommand{\sm}{\setminus}
\newcommand{\la}{\lambda}
\newcommand{\st}{such that }
\newcommand{\dem}{\bf Proof:}
\newcommand\lap{\Delta}
\newcommand\lab{\Delta_g}
\newcommand\ti{\tilde}
\newcommand{\lf}{\left}
\newcommand{\rg}{\right}
\newcommand\ds{\displaystyle}
\newcommand{\bebs}{\begin{equation*}\begin{split}}
\newcommand{\ee}{\end{equation*}}
\newcommand{\esp}{\end{split}}
\DeclareMathAlphabet{\mathpzc}{OT1}{pcz}{m}{it}
\begin{document}
\title{Singular mean field equations on compact Riemann surfaces}
\author{Pierpaolo Esposito\thanks{Dipartimento di Matematica, Universit\`a degli
Studi ``Roma Tre", Largo S. Leonardo Murialdo 1, 00146 Roma,
Italy, e-mail: esposito@mat.uniroma3.it. Author supported by
Prin project ``Critical Point Theory and Perturbative Methods for Nonlinear Differential Equations" and FIRB-IDEAS project ``Analysis and Beyond". } \qquad and \qquad Pablo
Figueroa\thanks{Departamento de Matem\'atica, Pontificia
Universidad Cat\'olica de Chile, Avenida Vicu\~na Mackenna 4860,
Macul, Santiago, Chile. E-mail: pfigueros@mat.puc.cl. Author
supported by grants Becas de Pasant\'ias Doctorales en el
Extranjero BECAS-CHILE and Fondecyt Postdoctorado 3120039,
Chile.}}
\date{\today}
\maketitle

\begin{abstract}
\noindent For a general class of elliptic PDE's  in mean field form on compact Riemann surfaces with exponential nonlinearity, we address the question of the existence of solutions with concentrated nonlinear term, which, in view of the applications, are physically of definite interest. In the model, we also include the possible presence of singular sources in the form of Dirac masses, which makes the problem more difficult to attack.
\end{abstract}

\emph{Keywords}:
\\[0.1cm]

\emph{2000 AMS Subject Classification}:

\section{Introduction}
\noindent Let us consider the problem
\begin{equation}\label{mfeot}
-\lab u=\lambda\lf(\frac{k \,e^{u}}{\int_S k \,e^{u}dv_g} -
\frac{1}{|S|}\rg)
\end{equation}
on a compact, orientable Riemann surface $(S,g)$, where $\lambda>0$, $k$ is a
smooth function and $|S|$ is the area of $S$. Here,
$\Delta_g$ is the Laplace-Beltrami operator and $dv_g$ is the area
element in $(S,g)$.

\medskip \noindent Equation (\ref{mfeot}) and its variants arise in many different
contexts. The Nirenberg problem concerns the existence on
$\mathbb{S}^2$ of metrics --conformal to the standard round metric
$g_0$-- with Gaussian curvature $k$, and corresponds to equation
\eqref{mfeot} with $\lambda=8\pi$. Indeed, a solution $u$ of
\eqref{mfeot} on $(\mathbb{S}^2,g_0)$ with $\lambda=8\pi$ provides
a metric $\frac{4\pi e^u}{\int_{\mathbb{S}^2} k e^u dv_{g_0}}
g_0$, conformal to $g_0$, with Gaussian curvature $k$. For a
general compact Riemann surface, the prescribed Gaussian curvature
problem is referred to as the Kazdan-Warner problem. Since there
are plenty of results in literature, let us just quote the ones
due to Kazdan and Warner \cite{KW}, Chang and Yang \cite{ChY} and
Chang, Gursky, Yang \cite{ChGY}. For bounded domains of $\R^2$, a
variant of \eqref{mfeot} with Dirichlet boundary condition arises
in fluid mechanics as the equation for the stream function of a
turbulent Euler flow with vortices of same orientation. By a
statistical mechanics approach a rigorous derivation of it can be
given as the mean-field limit of the Onsager's vortex theory, as
shown by Kiessling \cite{ChK,K} and Caglioti, Lions, Marchioro,
Pulvirenti \cite{CLMP1, CLMP2}, and it is referred to as the
``mean field equation". In all these contexts, the function $k$ is
typically positive.

\medskip \noindent Notice that (\ref{mfeot}) contains also the singular mean-field
equation
\begin{equation}
-\Delta_g v=\la\lf({h e^{v}\over\int_{S}h e^{v} dv_g}-
\frac{1}{|S|}\rg)+ {4\pi N\over|S|} -4\pi\sum_{j=1}^l
n_j\de_{p_j}\qquad\text{in $S$}
\label{singularEq}\end{equation}
as a special case, where $h>0$, $p_j\in S$, $j=1,\dots,l$, are distinct points, $n_j>0$ and $N=\displaystyle \sum_{j=1}^l n_j$. Indeed, introducing
the Green function $G(x,p)$ with pole at $p \in S$ as the solution
of
\begin{equation} \label{green}
\left\{ \begin{array}{ll} -\Delta_g G(\cdot,p)= \delta_{p}-\frac{1}{|S|} &\text{in $S$}\\
\int_S G(x,p)dv_g=0, &
\end{array} \right.
\end{equation}
the function $u(x)=v(x)+4\pi \displaystyle
\sum_{j=1}^l n_j G(x,p_j)$ does solve (\ref{mfeot}) with $k(x)=h(x)  e^{-4 \pi \sum_{j=1}^l n_j G(x,p_j)}$.
Here, the function $k$ is no longer positive, but is still nonnegative with zero set $\{p_1,\dots,p_l\}.$ On a flat torus $T$, singular mean-field equations with integer multiplicities $\{n_1,\dots,n_l\} \subset \mathbb{N}$ arise in the study of the asymptotics for non-topological (stationary) condensates in the relativistic abelian Chern-Simons-Higgs model as the Chern-Simons parameter tends to zero, as shown by Nolasco and
Tarantello \cite{NT}. In the context of Euler flows, the presence of singular sources model the interaction of the fluid running on the given surface $S$ with sinks of given vorticities and opposite orientation w.r.t. all the vortices present in the fluid.

\medskip \noindent Observe that \eqref{mfeot} admits a variational structure,
in the sense that weak solutions for \eqref{mfeot} are critical
points of the following energy functional
\begin{equation}\label{energy}
J_\lambda(u)={1\over2}\int_S|\grad u|_g^2 dv_g -
\lambda\log\lf(\int_S k e^{u} dv_g\rg),\:\: u\in \bar H,
\end{equation}
where $\bar H=\{u \in H^1(S): \int_S u dv_g=0\}$. For  $\lambda
< 8\pi$, $J_{\lambda}$ is bounded from below and the infimum of
$J_{\lambda}$ is achieved by the well-known Moser-Trudinger
inequality.

\medskip \noindent Let us focus first on the regular case $k>0$. For $k=1$ Struwe and Tarantello \cite{ST} were able to obtain non-trivial solutions of
\eqref{mfeot} for $8\pi < \lambda < 4\pi^2$ on the square flat torus $T$. In the case of compact Riemann surfaces with genus $g\ge 1$ the existence of solutions for \eqref{mfeot} with $8\pi < \lambda < 16 \pi$ was shown by Ding, Jost, Li, Wang \cite{DJLW} still by a variational approach.  The case $S= \mathbb{S}^2$ of zero genus was considered by Lin \cite{Lin} who proved nonvanishing of the Leray-Schauder degree $d_\lambda$ associated to \eqref{mfeot}
for $8\pi <\lambda < 16 \pi$ (and $d_\lambda=0$ for $16\pi < \lambda <24 \pi$).\\
Since the solutions set of \eqref{mfeot} is bounded in $C^{2,\alpha}(S)$, $\alpha \in (0,1)$, as long as $\lambda$ is far from the critical parameter's range $8\pi \mathbb{N}$, the degree $d_\lambda$ is well-defined and constant for all $\lambda \in (8\pi m,8\pi (m+1))$, $m \in \mathbb{N}$. As observed by Y.Y. Li \cite{Li}, its value should just depend on $m$ and the topology of $S$. The program for computing $d_\lambda$, initiated in \cite{Li}, was completely settled by Chen and Lin \cite{CL} showing that
$$d_\lambda=\left(\begin{array}{cc} m-\chi(S) \\ m \end{array} \right),$$
where $\chi(S)=2-2g$ is the Euler characteristic of $S$ (see also the variational approach later developed by Malchiodi \cite{Mal}). For $S\not=  \mathbb{S}^2$, the degree $d_\lambda$ is always non-trivial yielding to a solution of \eqref{mfeot} for all $\lambda \notin 8\pi \mathbb{N}$. While, as already partially proved by Lin \cite{Lin}, for $\mathbb{S}^2$ there holds $d_\lambda=0$ for all $\lambda>16 \pi$ with $\lambda \notin 8\pi \mathbb{N}$, and no existence statements can be deduced. A complete
positive answer to the existence issue for \eqref{mfeot} has been provided by Djadli \cite{Dja} for all $\lambda \notin 8\pi \mathbb{N}$ by means of a variational approach of min-max type, inspired by the result of Djadli and Malchiodi \cite{DjMa} concerning the fourth-order Paneitz operator in conformal geometry. Multiplicity results have been provided by De Marchis \cite{DeM1,DeM2}.

\medskip \noindent Solutions of \eqref{mfeot} are no longer a pre-compact set when $\lambda \to 8\pi \mathbb{N}$: blow-up in $L^\infty-$norm along with the concentration of the measure $\lambda \frac{ke^u}{\int_S ke^u dv_g}$ as a sum of Dirac masses possibly arise for sequences of solutions as $\lambda \to 8\pi \mathbb{N}$. Since $d_\lambda$ can change just when $\lambda$ crosses the values $8\pi m$, $m \in \mathbb{N}$,  it is crucial to have a precise asymptotic knowledge of blow-up solutions $u_\lambda$ and uniquely characterize them as $\lambda \to 8 \pi m$. The most refined asymptotic analysis is given by Chen and Lin \cite{CL0}: in particular, as $\lambda \to 8 \pi m$ $u_\lambda$ has $m$ well-separated maximum points (up to a subsequence) which converge to a critical point in $S^m \setminus \Delta$ of
\begin{equation}\label{fim}
\varphi_m (\xi)=\frac{1}{4\pi}\sum_{j=1}^m \log k(\xi_j )
+\sum_{j=1}^m H(\xi_j,\xi_j) +\sum_{l\not= j} G(\xi_l,\xi_j),
\end{equation}
where $H(x,\xi)$ is the regular part of $G(x,\xi)$ and $\Delta=\{\xi \in S^m:\,\xi_i=\xi_j \hbox{ for }i\not=j\}$ is the diagonal set in $S^m$. Let us notice that a critical point $\xi$ of $\varphi_m$ does satisfy 
$$\nabla \Big[\log k(x)+8\pi H(x,\xi_i) +8\pi \sum_{j\not= i} G(x,\xi_j)\Big] \Big|_{x=\xi_i}=0$$
for all $i=1,\dots,m$. In \cite{CL} blow-up solutions are constructed and their contribution to the degree is computed, so to determine (by local uniqueness of blow-up solutions) the jump in the values of $d_\lambda$ across $\lambda=8\pi m$. Since the degree $d_\lambda$ does not depend on $k$, it is possible to choose a positive function $k$ so that all the c.p.'s of $\varphi_m$ are non-degenerate, and then in \cite{CL} the authors simply address the existence of blow-up sequences of solutions for \eqref{mfeot} which concentrate at non-degenerate c.p.'s of $\varphi_m$ as $\lambda \to 8\pi m$.

\medskip \noindent The aim of the present paper is twofold. On one hand, we are interested in the construction of blow-up solutions with a general potential $k$ for which the corresponding $\varphi_m$ can possibly have degenerate but ``stable" c.p.'s. On the other hand, we are interested to the singular mean-field equation or, equivalently, to (\ref{mfeot}) with a nonnegative potential $k$ which vanishes somewhere. 

\medskip \noindent Let us focus now on the singular case. The first asymptotic analysis has been carried out by Bartolucci and Tarantello \cite{BT}, with an application in the electroweak theory following \cite{DJLW}. The asymptotic analysis has been refined later in \cite{BCLT,CL1}, with the on-going project by Chen and Lin \cite{CL2} of computing the Leray-Schauder degree $d_\lambda$, $\lambda \notin \Lambda$, where
$$\Lambda=8\pi \mathbb{N}+\{8\pi \sum_{j \in J}(1+n_j):\: J \subset \{1,\dots,l\}\}$$
is the correponding critical set of parameters where compactness might fail, see \cite{BT}. For $n_j \geq 1$ the degree $d_\lambda$ has been computed by Chen, Lin and Wang \cite{CLW} for $\lambda \in (8\pi,16\pi)$, revealing the special role played by $\mathbb{S}^2$, the sphere being the only surface for which the degree can vanish (precisely, it vanishes only for $l=1$). The critical regime $\lambda=8\pi$ has been considered in \cite{DJLW1,NT0} for a general surface. However, as we will explain below, the problem on the torus with total multiplicity $N=2$ becomes more degenerate. In this case, existence/non-existence issues have been discussed in \cite{CLW} for a rectangular torus (along with the computation of $d_{8\pi}$) and in \cite{LiWa} for the general case, physically relevant issues in connection with non-topological $2-$condensates in the Chern-Simons-Higgs model \cite{NT}. Existence results have been recently obtained by means of a variational approach of min-max type, inspired by \cite{DJLW,DjMa}, confirming the special role of $\mathbb{S}^2$ (see also the discussion in \cite{BLT,T1}). For $\lambda \notin \Lambda$, the singular problem \eqref{singularEq} is solvable for $S \not=\mathbb{S}^2$ \cite{BDM} (see also \cite{BD} for an application in the electroweak theory). The case of the sphere has been first considered by Malchiodi and Ruiz \cite{MR} for $n_j \leq 1$ and $\lambda \in (8\pi,16\pi)\setminus \Lambda$: the crucial assumption to have existence for \eqref{singularEq} is that $\# \:J\not=1$, where $J=\{j=1,\dots,l: \lambda<8\pi(1+n_j)\}$. The result has been extended by Bartolucci and Malchiodi \cite{BM} to general $n_j$'s and $\lambda$ under the condition $l\geq 2$ and $\lambda<8\pi \min\{1+n_j:j=1,\dots,l \}$, corresponding to the situation $\# J=l$.

\medskip \noindent In some of the above-mentioned papers, the regular/singular mean field equation has been also considered on a bounded domain $\Omega \subset \mathbb{R}^2$ with homogeneous Dirichlet b.c. Since $\int_S ke^{u_\lambda}dv_g \to +\infty$ along any non-compact sequence of solutions $u_\lambda$ for \eqref{mfeot}, through the setting $\rho= \frac{\lambda}{\int_S k e^{u}dv_g}$ problem (\ref{mfeot}) is naturally related (but not equivalent) to
$-\lab u=\rho \left(k e^u-\frac{1}{|S|}\int_S ke^u dv_g \right)$ with $\rho \to 0^+$, which has been recently studied by the second author in \cite{F}. Blow-up solutions for the corresponding Dirichlet problem
$$\left\{ \begin{array}{ll}-\Delta u=\rho k e^{u}& \hbox{in }\Omega\\
u=0 &\hbox{on }\partial \Omega \end{array} \right. $$
on a bounded domain $\Omega \subset \mathbb{R}^2$ have been constructed at c.p.'s of $\varphi_m$ which are non-degenerate \cite{bp} or, more generally, ``stable" \cite{DeKM,EGP}. A ``stable" critical value for $\varphi_m$ has been constructed by del Pino, Kowalczyk and Musso \cite{DeKM} for the regular problem on a non-simply connected domain and for the singular problem with $l=1$. The latter case has been extended to the flat torus \cite{F}, and a similar result is still in order for multiple singular sources as shown by D'Aprile \cite{Dap}, under suitable relations between $m$ and the $n_j$'s.

\medskip \noindent Setting
\begin{equation}\label{ro}
\rho_j(x)=k(x)e^{ 8\pi H(x,\xi_j)+ 8\pi \sum_{l\ne j}
G(x,\xi_l)},
\end{equation}
for $\xi \in S^m \setminus \Delta$ we introduce the notation
\begin{equation}\label{v}
A(\xi)=4\pi\sum_{j=1}^m\left[\Delta_g
\rho_j(\xi_j)-2K(\xi_j)\rho_j(\xi_j)\right],
\end{equation}
where $K$ is the Gaussian curvature of $(S,g)$. Letting $\tilde S=\{k>0\}$, our first main result is
\begin{theo}\label{main1}
Let $\mathcal{D} \subset \subset \tilde S^m \setminus \Delta$ be a stable critical set of $\varphi_m$. Assume that $A(\xi)>0$ ($<0$ resp.) for all $\xi \in \mathcal{D}$. Then, for all $\lambda$ in a small right (left resp.) neighborhood of $8 \pi m$ there is a solution $u_\lambda$ of \eqref{mfeot} so that (along sub-sequences)
\begin{equation}\label{cism}
\frac{\lambda k\,e^{u_\lambda}}{\int_S
k\,e^{u_\lambda}dv_g}\rightharpoonup 8\pi
\sum_{j=1}^{m}\delta_{q_j}
\end{equation}
as $\lambda \to 8\pi m$ in the sense of measures in $S$, for some $q=(q_1,\dots,q_m) \in \mathcal{D}$.
\end{theo}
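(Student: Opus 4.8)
The plan is to construct $u_\lambda$ by a finite-dimensional Lyapunov--Schmidt reduction. For $m$ points $\xi=(\xi_1,\dots,\xi_m)$ ranging near $\ml{D}$ and scales $\delta_1,\dots,\delta_m>0$, I would use the ansatz $W_{\delta,\xi}=\sum_{j=1}^m PU_{\delta_j,\xi_j}$, where in isothermal coordinates around $\xi_j$, in which $g=e^{\psi_j}(dx_1^2+dx_2^2)$ with $\psi_j(\xi_j)=0$, $\nabla\psi_j(\xi_j)=0$, the standard Liouville profile is $U_{\delta_j,\xi_j}(x)=\log\frac{8\delta_j^2}{(\delta_j^2+|x-\xi_j|^2)^2}$ and $PU_{\delta_j,\xi_j}\in\bar H$ solves $-\lab PU_{\delta_j,\xi_j}=e^{\psi_j}\chi_j e^{U_{\delta_j,\xi_j}}-\frac1{|S|}\int_S e^{\psi_j}\chi_j e^{U_{\delta_j,\xi_j}}\,dv_g$ for a fixed cut-off $\chi_j$ near $\xi_j$, so that away from $\xi_j$ one has $PU_{\delta_j,\xi_j}=U_{\delta_j,\xi_j}-\log(8\delta_j^2)+8\pi H(\cdot,\xi_j)+o(1)$. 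A refined version of the ansatz (adding to $PU_{\delta_j,\xi_j}$ the $O(\delta_j^2)$ correction that solves the linearized equation with the Taylor/curvature remainder of $\rho_j e^{\psi_j}$ as datum) makes the ansatz error of order $\delta^{2-\sigma}$ for small $\sigma>0$. The bubbles interact through the Green function, and the $m$ dilation equations force, at leading order, $\delta_j^2=\rho_j(\xi_j)\,\delta^2\,(1+o(1))$ with a single residual scale $\delta\to 0$, where $\rho_j$ is as in \eqref{ro}; the size of $\delta$ is fixed below in terms of $\lambda-8\pi m$. Since $\ml{D}\subset\subset\tilde S^m\setminus\Delta$, $k$ is bounded away from $0$ near each $\xi_j$, and any zeros of $k$ enter only through the globally smooth quantities $H$, $G$, $\varphi_m$; in the singular situation \eqref{singularEq}, $\log k(\xi_j)=\log h(\xi_j)-4\pi\sum_i n_i G(\xi_j,p_i)$ contributes a smooth term to $\varphi_m$.

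Writing $u=W_{\delta,\xi}+\phi$ with $\phi\in\bar H$, the equation becomes $L_{\delta,\xi}\phi=N_{\delta,\xi}(\phi)+E_{\delta,\xi}$, where $E_{\delta,\xi}$ is the ansatz error, $N_{\delta,\xi}$ the superquadratic part, and $L_{\delta,\xi}\phi=-\lab\phi-\lambda\frac{ke^{W_{\delta,\xi}}}{\int_S ke^{W_{\delta,\xi}}dv_g}\big(\phi-\overline\phi\big)$ with $\overline\phi=\big(\int_S ke^{W_{\delta,\xi}}dv_g\big)^{-1}\int_S ke^{W_{\delta,\xi}}\phi\,dv_g$. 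The analytic core is the invertibility of $L_{\delta,\xi}$ on the subspace orthogonal to the $3m$-dimensional approximate kernel generated by the projected dilation modes $Z_{0j}=\partial_{\delta_j}U_{\delta_j,\xi_j}$ and translation modes $Z_{ij}=\partial_{(\xi_j)_i}U_{\delta_j,\xi_j}$, $i=1,2$: after rescaling, near each $\xi_j$ the operator is a perturbation of $-\Delta-e^{U}$ on $\R^2$, whose kernel is exactly $\mathrm{span}\{Z_0,Z_1,Z_2\}$ by the nondegeneracy of the Liouville equation, while the rank-one nonlocal term and the discrepancy between $\lambda\frac{ke^{W_{\delta,\xi}}}{\int_S ke^{W_{\delta,\xi}}}$ and the sum of conformally weighted bubbles $\sum_j e^{\psi_j}e^{U_{\delta_j,\xi_j}}$ are compact and lower order. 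This gives a uniform bound $\|\phi\|\le C|\log\delta|\,\|h\|$ for the inverse, and a contraction-mapping argument then yields, for every admissible $(\delta,\xi)$, a unique small $\phi=\phi_{\delta,\xi}$ solving the projected problem modulo the approximate kernel, smooth in $(\delta,\xi)$ and negligible for what follows.

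It remains to adjust the $3m$ parameters so that $W_{\delta,\xi}+\phi_{\delta,\xi}$ is an exact solution. By the variational structure \eqref{energy}, this is equivalent to finding critical points of the reduced functional $\ml{F}_\lambda(\delta,\xi)=J_\lambda(W_{\delta,\xi}+\phi_{\delta,\xi})$, whose expansion (after using $\delta_j^2=\rho_j(\xi_j)\delta^2(1+o(1))$) I expect to take the form
\begin{equation*}
\ml{F}_\lambda(\delta,\xi)=c_0(\lambda)+2(8\pi m-\lambda)\log\frac1\delta-32\pi^2\,\varphi_m(\xi)+c_1\,\delta^2 A(\xi)+\text{l.o.t.},
\end{equation*}
with $\varphi_m$ and $A$ as in \eqref{fim}, \eqref{v} and a universal constant $c_1<0$ (possibly with a harmless $|\log\delta|$ factor multiplying $\delta^2 A(\xi)$); the $A(\xi)$ term comes from expanding $\int_S ke^{W_{\delta,\xi}}dv_g$ one order further, the Gaussian curvature appearing through $dv_g$ and $\lab$ in the conformal coordinates via $\Delta\psi_j(\xi_j)=-2K(\xi_j)$, which is exactly what turns the local Laplacian of $\rho_j e^{\psi_j}$ into the combination $\lab\rho_j(\xi_j)-2K(\xi_j)\rho_j(\xi_j)$. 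The scale equation $\partial_\delta\ml{F}_\lambda=0$ then reads $c_1\delta^2 A(\xi)=8\pi m-\lambda+\text{l.o.t.}$, i.e. $\delta^2=\dfrac{8\pi m-\lambda}{c_1 A(\xi)}(1+o(1))$, which is solvable with $\delta(\lambda,\xi)\to 0^+$ precisely when $\lambda-8\pi m$ and $A(\xi)$ share the same sign — hence for $\lambda$ in a small right (resp. left) neighborhood of $8\pi m$ when $A>0$ (resp. $A<0$) on $\ml{D}$. Substituting $\delta=\delta(\lambda,\xi)$, the remaining $2m$ equations in $\xi$ become a $C^1$-small perturbation of $\grad\varphi_m(\xi)=0$ on $\ml{D}$; since $\ml{D}$ is a stable critical set of $\varphi_m$ (a property unaffected by the overall sign and persistent under $C^1$-small perturbations), this perturbed system has a solution $q=q(\lambda)\in\ml{D}$, and $u_\lambda:=W_{\delta(\lambda,q),q}+\phi_{\delta(\lambda,q),q}$ is the desired solution. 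Finally \eqref{cism} follows from $\|\phi_{\delta,\xi}\|\to 0$ and the explicit profile of $W_{\delta,\xi}$, each $PU_{\delta_j,\xi_j}$ making $\lambda\frac{ke^{u_\lambda}}{\int_S ke^{u_\lambda}dv_g}$ concentrate the mass $8\pi$ onto $q_j$.

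The step I expect to be most delicate is the sharp expansion of $\ml{F}_\lambda$ up to the order where $A(\xi)$ emerges: one must carefully track the curvature corrections in the local expansions of the metric, of $\lab$ and of the Green function, handle the cross-interaction of the $m$ bubbles, and use the refined correction to the ansatz so that the precise combination in \eqref{v} and the value (and sign) of $c_1$ survive all cancellations. A secondary difficulty is that $\ml{D}$ need only be a stable critical \emph{set}, so the $\xi$-reduction cannot be closed by the implicit function theorem; instead one invokes the defining stability property (e.g. a nonvanishing local degree of $\grad\varphi_m$, or a min-max characterization) after checking that $\xi\mapsto\delta(\lambda,\xi)$ and all remainders are $C^1$-small uniformly on $\ml{D}$.
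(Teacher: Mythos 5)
Your overall strategy coincides with the paper's: the ansatz $W=\sum_j PU_{\delta_j,\xi_j}$ with $\delta_j^2=\rho_j(\xi_j)\delta^2$, a reduced energy of the form $-32\pi^2\varphi_m(\xi)+2(\lambda-8\pi m)\log\delta+A(\xi)\delta^2\log\delta+O(\delta^2)$, the balance $\lambda-8\pi m\approx A(\xi)\,\delta^2|\log\delta|$ which is solvable for $\delta(\lambda,\xi)\to0^+$ exactly when $\lambda-8\pi m$ and $A$ have the same sign, and the stability of $\ml{D}$ to close the equations in $\xi$. The genuine gap is in your linear theory. You assert that near each bubble the linearization is a perturbation of $\Delta+e^{U}$ on $\R^2$, with kernel spanned by $Z_0,Z_1,Z_2$, the ``rank-one nonlocal term'' being compact and lower order. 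This is precisely what fails in the mean-field setting and is the new feature the paper emphasizes: the average term does not disappear in the blow-up limit, the relevant limit operator is $\hat L(\phi)=\Delta\phi+\frac{8}{(1+|y|^2)^2}\big(\phi-\frac1\pi\int_{\R^2}\frac{\phi(z)}{(1+|z|^2)^2}dz\big)$, and its bounded kernel contains the constants in addition to $Y_0,Y_1,Y_2$. Concretely, a function which is essentially constant near each bubble is orthogonal to all of your $3m$ geometric modes, so your claimed invertibility argument does not exclude it; this is the unknown limit $c_0$ of the averages $\int_S ke^{W}\phi\, dv_g/\int_S ke^{W}dv_g$ in the paper's Appendix A, which is killed only through a Green-representation argument, and the individual dilation components $a_{0j}$ are then removed by testing against the log-corrected functions $PZ_j$ following \cite{EGP}. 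Your proposal also has an internal inconsistency in the parameter count: you slave $\delta_j^2=\rho_j(\xi_j)\delta^2(1+o(1))$ to one residual scale, yet speak of adjusting $3m$ parameters against a $3m$-dimensional kernel. Either you keep $m$ independent scales, in which case you must actually solve the $m$ dilation equations, which are degenerate at leading order along the common-scale direction and only become solvable at order $\delta^2\log\delta$ through $A(\xi)$; or you fix $\delta_j^2=\delta^2\rho_j(\xi_j)$ exactly as the paper does, in which case only $2m+1$ orthogonality conditions (against $PZ=\sum_j PZ_{0j}$ and $PZ_{ij}$, $i=1,2$) are available and the harder a priori estimate of Proposition \ref{p2} is unavoidable.

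Two smaller points. The stability of $\ml{D}$ in the sense of Definition \ref{stable} yields a critical point of the perturbed reduced functional in a prescribed closed neighborhood $U$ of $\ml{D}$, with $\xi_\lambda\to q\in\ml{D}$ only along subsequences, not a critical point lying in $\ml{D}$ itself; and to verify the required $C^1$-closeness to $-32\pi^2\varphi_m$ after substituting $\delta=\delta(\lambda,\xi)$ you need the reduced-energy expansion in the $C^2$-sense with respect to $\delta$ (plus control of $\partial_{\de\xi}E_\lambda$) so that $\partial_\xi\delta(\lambda,\xi)$ is small, together with a cut-off extension of $\delta(\lambda,\xi)$ outside the set where $|\nabla\varphi_m|_g$ is small, where $\partial_{\de\de}E_\lambda$ need not have a sign. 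You flag this difficulty at the end, but it is an essential step of the argument rather than a routine check.
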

\noindent Along with (\ref{cism}) notice that there always hold that $u_\lambda-\log \int_S ke^{u_\lambda}dv_g \to -\infty$ in $C_{\hbox{loc}}(S\setminus \{q_1,\dots,q_m\})$ and
$$\sup_{U_j} \left(u_\lambda-\log \int_S ke^{u_\lambda}dv_g \right)\to +\infty$$
as $\lambda \to 8\pi m$, for any neighborhood $U_j$ of $q_j$ in $S$, $j=1,\dots,m$. The notion of stability we are using here is the one introduced in \cite{Li0}:
\begin{dfn}
\label{stable} A critical set $\mathcal{D} \subset \subset \tilde S^m \setminus \Delta$ of $\varphi_m$ is stable if for any closed neighborhood $U$ of $\mathcal{D}$ in  $\tilde S^m \setminus \Delta$ there exists $\delta>0$ such that, if
$\|G-\varphi_m\|_{C^1(U)}\leq \delta$, then $G$ has at least one critical point in $U$. In particular, the minimal/maximal set of $\varphi_m$ is stable (if $\varphi_m$ is not constant) as well as any isolated c.p. of $\varphi_m$ with non-trivial local degree.
\end{dfn}

\medskip \noindent  Since $A(\xi)$ can be re-written as
\begin{eqnarray} \label{ppp}
A(\xi)&=& 4\pi \sum_{j=1}^m \rho_j(\xi_j) [\Delta_g \log \rho_j(\xi_j)+|\nabla \log \rho_j(\xi_j)|^2_g -2K(\xi_j)]\nonumber \\
&=& 4\pi \sum_{j=1}^m \rho_j(\xi_j) [\Delta_g \log k(\xi_j)+\frac{8\pi m}{|S|}+|\nabla \log \rho_j(\xi_j)|^2_g -2K(\xi_j)],
\end{eqnarray}
for a c.p. of $\varphi_m$ we have that
$$A(\xi)= 4\pi \sum_{j=1}^m \rho_j(\xi_j) [\Delta_g \log k(\xi_j)+\frac{8\pi m}{|S|} -2K(\xi_j)]$$
in view of $\nabla \rho_j(\xi_j)=0$ for all $j=1,\dots,m$. Since for $k>0$ the function $\varphi_m$ always attains its minimum value in $S^m \setminus \Delta$ and the minimal set is clearly stable, as a first by-product we have (see also \cite{CL}):
\begin{cor} \label{appl1}
Assume $k>0$. Let $m \in \mathbb{N}$ be so that either $1\leq m< \inf_S \frac{|S|}{8\pi}[2K-\Delta_g \log k]$ or $m> \sup_S \frac{|S|}{8\pi}[2K-\Delta_g \log k]$. Then there exist solutions $u_\lambda$ of (\ref{mfeot}) which concentrate at $m$ points $q_1,\dots, q_m$ in the sense \eqref{cism} as $\lambda \to 8\pi m$, where $q=(q_1,\dots,q_m)$ is a minimum point of $\varphi_m$ in $S^m \setminus \Delta$.
\end{cor}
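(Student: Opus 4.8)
\medskip\noindent The plan is to deduce the statement directly from Theorem \ref{main1}, the only task being to exhibit a stable critical set $\mathcal D\subset\subset\tilde S^m\setminus\Delta$ of $\varphi_m$ on which $A$ has a definite sign; the natural candidate is the set where $\varphi_m$ attains its minimum over $S^m\setminus\Delta$. Since $k>0$ we have $\tilde S=S$ and, by \eqref{ro}, $\rho_j>0$ on all of $S$ for every $j$. We first check that $\varphi_m$ attains its minimum on $S^m\setminus\Delta$ at a compact set $\mathcal D$ with $\mathcal D\subset\subset S^m\setminus\Delta$: this rests on the standard logarithmic asymptotics of the Green's function of \eqref{green}, which forces $G(\xi_l,\xi_j)\to+\infty$ as $\xi_l\to\xi_j$, while the remaining terms $\frac1{4\pi}\log k(\xi_j)$ and $H(\xi_j,\xi_j)$ stay bounded on the compact surface $S$ and $G(\xi_l,\xi_j)$ with $l\ne j$ stays bounded below; hence $\varphi_m(\xi)\to+\infty$ as $\xi\to\Delta$ and $\varphi_m$ is bounded below, so by compactness of $S^m$ the minimal set $\mathcal D$ is nonempty, compact and bounded away from $\Delta$. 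For $m\ge2$, $\varphi_m$ is unbounded near $\Delta$, hence non-constant, so $\mathcal D$ is a stable critical set in view of Definition \ref{stable}; for $m=1$ one has $\Delta=\emptyset$ and the same conclusion holds (in the degenerate case $\varphi_1\equiv$ const one may simply take $\mathcal D=S$, trivially a stable critical set since every $C^1$ function on compact $S$ has a critical point).

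\medskip\noindent It remains to check the sign of $A$ on $\mathcal D$. Every $\xi\in\mathcal D$ is a critical point of $\varphi_m$, so $\nabla\rho_j(\xi_j)=0$ for all $j$ and \eqref{ppp} gives
$$A(\xi)=4\pi\sum_{j=1}^m\rho_j(\xi_j)\Big[\Delta_g\log k(\xi_j)+\frac{8\pi m}{|S|}-2K(\xi_j)\Big].$$
Since $\rho_j(\xi_j)>0$, the sign of $A$ on $\mathcal D$ coincides with the sign of $\Delta_g\log k+\frac{8\pi m}{|S|}-2K$ on $S$: the assumption $m<\inf_S\frac{|S|}{8\pi}\big[2K-\Delta_g\log k\big]$ makes this quantity strictly negative on all of $S$, so $A<0$ on $\mathcal D$, whereas $m>\sup_S\frac{|S|}{8\pi}\big[2K-\Delta_g\log k\big]$ makes it strictly positive, so $A>0$ on $\mathcal D$. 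In either case Theorem \ref{main1} applies with this $\mathcal D$ and yields, for $\lambda$ in a suitable one-sided neighborhood of $8\pi m$, a solution $u_\lambda$ of \eqref{mfeot} satisfying \eqref{cism} (along subsequences) for some $q\in\mathcal D$, i.e.\ concentrating at a minimum point of $\varphi_m$ in $S^m\setminus\Delta$.

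\medskip\noindent Given Theorem \ref{main1} and the reduction \eqref{ppp}, the proof is essentially a verification of hypotheses and I do not expect a genuine obstacle; the single point that needs care is the coercivity of $\varphi_m$ near the diagonal keeping $\mathcal D$ compactly inside $S^m\setminus\Delta$, which is precisely where the logarithmic blow-up of the Green's function enters, the remainder being bookkeeping with the sign of $A$.
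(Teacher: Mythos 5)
Your argument is correct and follows essentially the paper's own route: the corollary is deduced from Theorem \ref{main1} by observing that for $k>0$ the minimum of $\varphi_m$ is attained on a compact subset of $S^m\setminus\Delta$ (coercivity near the diagonal coming from $G\to+\infty$), that the minimal set is stable, and that at any critical point $A(\xi)=4\pi\sum_{j=1}^m\rho_j(\xi_j)\big[\Delta_g\log k(\xi_j)+\tfrac{8\pi m}{|S|}-2K(\xi_j)\big]$, whose sign is fixed on all of $S$ by the hypotheses on $m$. The only point where you go beyond the paper is the degenerate case $\varphi_1\equiv\mathrm{const}$, which you settle by taking $\mathcal D=S$ (consistent with Definition \ref{stable} and Theorem \ref{main1} as stated), whereas the paper simply flags that situation as problematic and does not treat it; this does not affect the non-degenerate cases.
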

\noindent When the surface $(S,g)$ has constant Gaussian curvature, by the Gauss-Bonnet formula we have that $K=\frac{2 \pi \chi(S)}{|S|}$. For $k=1$, Corollary \ref{appl1} then provides the existence of blow-up solutions $u_\lambda$ concentrating at $m$ points as $\lambda \to 8\pi m$ for all $m\geq 2$, where $\lambda$ belongs to a small right neighborhood of $8\pi m$. The case $m=1$ is problematic since $\varphi_1$ is a constant function.

\medskip \noindent Concerning the singular problem \eqref{singularEq}, in general the function $\varphi_m$ has neither maximum nor minimum points, and it is then natural to search for saddle critical points. The min-max scheme introduced in \cite{Dap} works in the Euclidean context as well as in the case of a surface \cite{DapE}. In particular, on $\mathbb{S}^2$ the function $\varphi_m$ has a ``stable" critical value of min-max type as soon as $l\geq 2$ and
\begin{equation} \label{conditionsphere}
8\pi m \notin 8\pi \mathbb{N}+8\pi(1+n_j) \:\:\forall\: j=1,\dots,l,\quad \#J\geq 2,
\end{equation}
where $J=\{j=1,\dots,l: 8\pi m<8\pi(1+n_j)\}$. In the construction, each singular source $p_i$ has to be coupled with some $p_j \not= p_i$ in order to deform $\mathbb{S}^2\setminus\{p_i,p_j\}$ onto a circle running around $p_i$, and the condition $l\geq 2$ is crucial. Notice that the min-max scheme provides a critical point $q$ of $\varphi_m$ so that $\{q \}$ is a stable critical set according to Definition \ref{stable}. Morover, since $\#J\geq 2$ yields to $2m<2+N$, for $k(x)=e^{-4 \pi \sum_{j=1}^l n_j G(x,p_j)}$ we have that
$$A(q)=\frac{16 \pi^2 }{|\mathbb{S}^2|} \sum_{j=1}^m \rho_j(q_j) [-N+2m -2]<0$$
in view of $K=\frac{4\pi}{|\mathbb{S}^2|}$. As a second by-product of Theorem \ref{main1}, we have:
\begin{cor} \label{appl2}
Let $h=1$ and $l\geq 2$. Assume that $S$ is topologically a sphere and that $m$ satisfies \eqref{conditionsphere}. Then, for all $\lambda$ in a small left neighborhood of $8 \pi m$ there is a solution $u_\lambda$ of \eqref{singularEq} which concentrates at $m$ points $q_1,\dots, q_m$ in the sense \eqref{cism} as $\lambda \to 8\pi m$.
\end{cor}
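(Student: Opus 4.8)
The plan is to deduce Corollary \ref{appl2} from Theorem \ref{main1} through the desingularization recalled in the Introduction. With $h\equiv1$ put $k(x)=e^{-4\pi\sum_{j=1}^l n_j G(x,p_j)}$; this potential is smooth and nonnegative on $S$, vanishing exactly at the points $p_1,\dots,p_l$, so $\tilde S=S\setminus\{p_1,\dots,p_l\}$. If $u$ solves \eqref{mfeot} with this $k$, then $v:=u-4\pi\sum_{j=1}^l n_j G(\cdot,p_j)$ solves \eqref{singularEq}, because $-\lab G(\cdot,p_j)=\delta_{p_j}-\frac1{|S|}$ and $ke^{u}=e^{v}$; moreover the normalized measure in \eqref{cism} is unchanged under this substitution, since $\frac{\la k e^{u}}{\int_S k e^{u}dv_g}=\frac{\la e^{v}}{\int_S e^{v}dv_g}$. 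Hence it suffices to produce, for $\la$ in a small left neighbourhood of $8\pi m$, a solution of \eqref{mfeot} for this $k$ that concentrates in the sense of \eqref{cism} at $m$ points of $\tilde S$.

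To apply Theorem \ref{main1} one needs a stable critical set $\mathcal{D}\subset\subset\tilde S^m\setminus\Delta$ of $\varphi_m$ on which $A<0$. For the existence of $\mathcal{D}$ I rely on the min-max construction of \cite{Dap,DapE}: as recalled just before the statement, when $S$ is topologically a sphere, $l\geq 2$ and \eqref{conditionsphere} holds, that scheme furnishes a critical point $q\in\tilde S^m\setminus\Delta$ of $\varphi_m$ such that the singleton $\mathcal{D}:=\{q\}$ is a stable critical set in the sense of Definition \ref{stable}. (The barriers $\varphi_m\to+\infty$ as $\xi\to\Delta$ and $\varphi_m\to-\infty$ as some $\xi_j\to p_i$ confine the min-max value to the interior of $\tilde S^m\setminus\Delta$, while the coupling of each $p_i$ with some $p_j\neq p_i$ --- possible thanks to $\#J\geq 2$ --- is what makes the relevant linking non-trivial.) For the sign of $A$ I use \eqref{ppp}: at a critical point $\nabla\rho_j(q_j)=0$, and away from the poles $\lab\log k=-4\pi\sum_{j}n_j\lab G(\cdot,p_j)=-\frac{4\pi N}{|S|}$, so recalling $K\equiv\frac{4\pi}{|\mathbb{S}^2|}$ one gets
$$A(q)=\frac{16\pi^2}{|\mathbb{S}^2|}\sum_{j=1}^m\rho_j(q_j)\,[\,2m-2-N\,]<0,$$
since $\rho_j(q_j)>0$ and $\#J\geq 2$ forces $n_j>m-1$ for (at least) two indices, hence $N>2(m-1)$.

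With $\mathcal{D}=\{q\}$ and $A(q)<0$ at hand, Theorem \ref{main1} (the ``$<0$'' alternative) yields, for every $\la$ in a small left neighbourhood of $8\pi m$, a solution $w_\la$ of \eqref{mfeot} with $\frac{\la k e^{w_\la}}{\int_S k e^{w_\la}dv_g}\rightharpoonup 8\pi\sum_{j=1}^m\delta_{q_j}$ as $\la\to 8\pi m$ along a subsequence, all the $q_j$ lying in $\tilde S$, so that $q_j\neq p_i$ for every $i$. Setting $u_\la:=w_\la-4\pi\sum_{j=1}^l n_j G(\cdot,p_j)$ then produces the claimed family of solutions of \eqref{singularEq}, and the identity $ke^{w_\la}=e^{u_\la}$ shows that the concentration \eqref{cism} holds verbatim for $u_\la$.

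Once Theorem \ref{main1} is granted, the only genuinely non-routine ingredient is the existence of the stable critical set, which is borrowed entirely from \cite{Dap,DapE}; the desingularization and the evaluation of the sign of $A(q)$ are elementary. I would therefore expect the write-up to be short, the main care being to make sure that the min-max critical point produced in \cite{DapE} does stay away both from the diagonal and from the singular set, so that $\{q\}$ genuinely qualifies --- in the sense of Definition \ref{stable} --- as a stable critical set of $\varphi_m$ lying in $\tilde S^m\setminus\Delta$, exactly as Theorem \ref{main1} requires.
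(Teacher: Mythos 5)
Your proposal is correct and follows essentially the same route as the paper: the corollary is deduced from Theorem \ref{main1} by desingularizing with $k=e^{-4\pi\sum_j n_j G(\cdot,p_j)}$, invoking the min-max construction of \cite{Dap,DapE} (relying on $l\geq2$ and \eqref{conditionsphere}) to obtain a stable critical point $\{q\}$ of $\varphi_m$ in $\tilde S^m\setminus\Delta$, and computing $A(q)=\frac{16\pi^2}{|\mathbb{S}^2|}\sum_j\rho_j(q_j)[2m-2-N]<0$ from \eqref{ppp} since $\#J\geq2$ gives $N>2(m-1)$. The sign computation, the translation $u=v+4\pi\sum_j n_j G(\cdot,p_j)$ and the identification of the concentrating measures all match the paper's argument.
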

\noindent Theorem \ref{main1} and Corollary \ref{appl1} are the perturbative counter-parts of global existence results already available in literature, obtained via degree theory or a variational approach.  However, the behavior of such solutions as $\lambda \to 8\pi m$ is not known whereas the ones we construct exhibit blow-up phenomena, a property that has a definite interest in its own. More important, Corollary \ref{appl2} gives completely new results for the case of $\mathbb{S}^2$, by showing that in a perturbative regime the condition $\#J\geq 2$ in \cite{MR} is sufficient for the existence in the general case, beyond the results in \cite{BM}. Moreover, in \cite{DapE} the cases $\#J=0,1$ are also treated.

\medskip \noindent There are cases for which $A(\xi)$ can vanish. By invariance under rotations, it is easily seen that on $\mathbb{S}^2$ the function $H(\xi,\xi)$ is constant, and then the c.p.'s of $\varphi_1$ and $k$ do coincide. Since in particular $\nabla H(x,\xi)\Big|_{x=\xi}=0$, by (\ref{ppp}) for $S=\mathbb{S}^2$ and $m=1$ the coefficient $A(\xi)$ writes as
\begin{equation} \label{casopartA}
A(\xi)=4\pi k(\xi)e^{8\pi H(\xi,\xi)}[\Delta_g \log k(\xi)+|\nabla \log k(\xi)|^2_g]=4\pi e^{8\pi H(\xi,\xi)} \Delta_g  k(\xi),
\end{equation}
and might vanish at some c.p. of $k$. Another typical example is the singular mean-field equation \eqref{singularEq} on the flat torus $T$ with $h=1$ and even total multiplicity $N$: since $k=e^{-4\pi \sum_{j=1}^l n_j G(x,p_j)}$ and $K\equiv 0$, by (\ref{ppp}) the coefficient
$A(\xi)$ writes for $m=\frac{N}{2}$ as
$$A(\xi)=4\pi \sum_{j=1}^m \rho_j(\xi_j) |\nabla \log \rho_j(\xi_j)|^2_g=(4\pi)^3 \sum_{j=1}^m \rho_j(\xi_j) |\nabla_{\xi_j} \varphi_m(\xi)|^2_g \geq 0,$$
and vanishes exactly at the c.p.'s of $\varphi_m$. In all these
situations, a more refined analysis is necessary.

\medskip \noindent Introduce the following quantity
\begin{eqnarray} \label{B}
B(\xi)&=& -2\pi \sum_{j=1}^m [\Delta_g  \rho_j(\xi_j) -2 K(\xi_j) \rho_j(\xi_j)] \log \rho_j(\xi_j) -\frac{A(\xi)}{2}\\
&&+\lim_{r \to 0}\left[ 8 \int_{S \setminus \cup_{j=1}^m B_r(\xi_j)}   ke^{8\pi \sum_{j=1}^m G(x,\xi_j)} dv_g-\frac{8\pi}{r^2} \sum_{j=1}^m \rho_j(\xi_j)-A(\xi) \log \frac{1}{r} \right],
\nonumber \end{eqnarray}
where $B_r(\xi)$ denotes the pre-image of $B_r(0)$ through the isothermal coordinate system at $\xi$. The quantity $B(\xi)$ has been first used and derived by Chang, Chen and Lin \cite{ChChL} in the study of  the mean field equation on bounded domains (see also \cite{CLW,LiYa} for the case of the torus). We have the following general result, of which Theorem \ref{main1} is just a special case:
\begin{theo} \label{main2}
Let $\mathcal{D} \subset \subset \tilde S^m \setminus \Delta$ be a stable critical set of $\varphi_m$. Assume that
\begin{equation} \label{cond}
\hbox{either }A(\xi) >0 \:(<0 \hbox{ resp.)} \qquad \hbox{or} \qquad A(\xi)=0, \:B(\xi)>0 \: (<0 \hbox{ resp.)}
\end{equation}
do hold in a closed neighborhood $U$ of $\mathcal{D}$ in $\tilde S^m \setminus \Delta$. Then, for all $\lambda$ in a small right (left resp.) neighborhood of $8 \pi m$ there is a solution $u_\lambda$ of \eqref{mfeot} which concentrates (along sub-sequences) at $m$ points $q_1,\dots, q_m$ in the sense (\ref{cism}) as $\lambda \to 8\pi m$, for some $q \in \mathcal{D}$. 
\end{theo}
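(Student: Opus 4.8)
The plan is to construct solutions via a Lyapunov--Schmidt (finite-dimensional) reduction around an explicit approximate solution, following the by-now classical scheme for singular Liouville-type equations on surfaces. First I would reformulate \eqref{mfeot} in terms of the shifted unknown $v=u-\log\int_S k e^u\,dv_g$ and set $\rho=\lambda/\int_S k e^u\,dv_g$, so that the problem becomes $-\lab v=\rho(k e^{v}-\tfrac1{|S|})$ with the constraint $\rho\int_S k e^{v}\,dv_g=\lambda$; the parameter $\rho\to 0^+$ as $\lambda\to 8\pi m$. For $\xi=(\xi_1,\dots,\xi_m)\in\mathcal D$ and concentration parameters $\delta_j>0$ I would define the ansatz as a sum of projected standard bubbles $U_{\delta_j,\xi_j}$ --- the pull-back under the isothermal chart at $\xi_j$ of $\log\frac{8\delta_j^2}{(\delta_j^2+|y|^2)^2}$ --- corrected by $8\pi H(\cdot,\xi_j)+8\pi\sum_{l\ne j}G(\cdot,\xi_l)$ and by a further additive correction term $w_j$ chosen (solving a linear ODE/elliptic problem in the single-bubble variable) so that the error in $L^\infty$ or in a weighted Sobolev norm is of the right order in $\delta_j$ after the coefficient $A(\xi)$ (and, when $A\equiv 0$, $B(\xi)$) has been extracted.

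Next I would carry out the invertibility step. The linearized operator at the single bubble $U_{\delta,\xi}$ on $\R^2$ has, by the classification results of Baraket--Pacard and Chen--Lin, a kernel spanned by the three functions coming from translations and dilations; after projecting out the components tangent to these kernels (equivalently, working in a subspace orthogonal to $\partial_{\xi_j}U_{\delta_j,\xi_j}$ and $\partial_{\delta_j}U_{\delta_j,\xi_j}$), the operator becomes uniformly invertible with norm blowing up like $|\log\delta|$, where $\delta=\max_j\delta_j$. A contraction mapping argument in the orthogonal subspace then produces, for each $(\delta,\xi)$, a genuine error term $\phi=\phi_{\delta,\xi}$ of size $o(\delta)$ (or $o(\delta^2)$ once $A,B$ are incorporated) solving the projected equation. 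The reduced functional $F(\delta,\xi)=J_\lambda$ evaluated at the ansatz plus $\phi$, together with the constraint relating $\rho,\lambda,\delta$, must then be expanded: the leading term is $m\cdot(\text{const}) + 8\pi\,\varphi_m(\xi)\cdot(\text{something in }\lambda-8\pi m)$ plus a term whose sign is governed by $A(\xi)$, and --- in the degenerate case $A\equiv 0$ on $U$ --- by $B(\xi)$, which is exactly why \eqref{cond} appears. Solving $\partial_{\delta_j}F=0$ determines $\delta_j$ as an explicit function of $\lambda$ (this is where the sign condition on $A$ or $B$ forces $\lambda$ to lie on the correct, right or left, side of $8\pi m$, since one needs $\delta_j\to 0^+$), and reduces the whole problem to finding a critical point of a $C^1$-small perturbation of $\varphi_m$ on $\mathcal D$.

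The last step uses stability of $\mathcal D$ in the sense of Definition~\ref{stable}: since after the reduction the function to be critically-pointed is $\varphi_m$ plus an error that is $o(1)$ in $C^1(U)$ as $\lambda\to 8\pi m$, Definition~\ref{stable} guarantees a critical point $q=q(\lambda)\in U$, and since $\mathcal D\subset\subset\tilde S^m\setminus\Delta$ one checks $q(\lambda)\in\mathcal D$ (hence $q(\lambda)$ stays away from both the diagonal and the zero set of $k$) for $\lambda$ close enough to $8\pi m$; passing to a subsequence gives $q\in\mathcal D$. Unwinding the changes of variables, the corresponding $u_\lambda$ solves \eqref{mfeot} and, from the bubble profile, $\frac{\lambda k e^{u_\lambda}}{\int_S k e^{u_\lambda}\,dv_g}\rightharpoonup 8\pi\sum_j\delta_{q_j}$. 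I expect the main obstacle to be the \emph{second-order expansion of the energy}: one must compute $J_\lambda$ at the corrected ansatz accurately enough to see the term carrying $A(\xi)$, and in the degenerate case push the expansion one order further to isolate $B(\xi)$ --- this requires the precise choice of the correction terms $w_j$, a careful treatment of the interaction between bubbles through $G$ and $H$, and the delicate limit defining $B(\xi)$ in \eqref{B}, where the divergent pieces $\tfrac{8\pi}{r^2}\sum_j\rho_j(\xi_j)$ and $A(\xi)\log\tfrac1r$ must cancel exactly against the boundary contributions coming from the isothermal-coordinate annuli. Controlling the $\phi$-dependent remainder in this finer expansion (showing it does not spoil the $B(\xi)$ term) is the technically heaviest point; the presence of the Dirac sources enters only through the nonnegative potential $k$ vanishing at the $p_j$'s, which is harmless since $\mathcal D\subset\subset\tilde S^m\setminus\Delta$ keeps the bubbles away from $\{k=0\}$.
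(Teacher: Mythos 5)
Your plan has the same architecture as the paper's proof: projected bubbles in isothermal coordinates, a Lyapunov--Schmidt reduction with $L^\infty$/weighted-norm estimates whose inverse loses a factor $|\log\delta|$, an expansion of the reduced energy that isolates $A(\xi)$ and, in the degenerate case, $B(\xi)$, then solving the scaling equation to determine the concentration rate (this is where the sign in \eqref{cond} selects the right or left side of $8\pi m$), and finally the stability of $\mathcal{D}$ applied to a $C^1$-small perturbation of $\varphi_m$. The one step that, as written, would not deliver the theorem under its stated hypotheses is your treatment of the dilation parameters: you keep $m$ independent parameters $\delta_j$, impose orthogonality against each $\partial_{\delta_j}U_{\delta_j,\xi_j}$, and solve the $m$ equations $\partial_{\delta_j}F=0$ ``using the sign of $A$ or $B$''. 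But $A(\xi)$ and $B(\xi)$ are aggregate quantities (sums over $j$), and the individual scaling equations would be governed by the individual coefficients $4\pi\left[\Delta_g\rho_j(\xi_j)-2K(\xi_j)\rho_j(\xi_j)\right]$ (and their $B_j$-analogues), whose signs the theorem does not control. The paper avoids this by coupling the scales a priori, $\delta_j^2=\delta^2\rho_j(\xi_j)$ as in \eqref{repla0}, so that only \emph{one} extra parameter $\delta$ enters the reduction, with a \emph{single} extra orthogonality condition against the combined dilation mode $PZ$ (this is also the natural count, since in $\bar H$ the limit operator has the constants in its kernel, which is the new feature compared with the Dirichlet case); then $\partial_\delta E_\lambda=0$ sees exactly $2(\lambda-8\pi m)/\delta+2A(\xi)\delta\log\delta+(A(\xi)-2B(\xi))\delta+\dots$, and the hypothesis \eqref{cond} suffices. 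If you insist on $m$ free scales you must first solve the ``relative-scale'' directions (which lock, non-degenerately, at $\delta_j^2\propto\rho_j(\xi_j)$) before the overall scale feels $A$ and $B$; your sketch skips this.

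Two further points need tightening. First, the reduced energy is $-8\pi m-\lambda\log(\pi m)-32\pi^2\varphi_m(\xi)+2(\lambda-8\pi m)\log\delta+A(\xi)\delta^2\log\delta-B(\xi)\delta^2+o(\delta^2)+r_\lambda$, so $\varphi_m$ enters at order one with a fixed coefficient, not multiplied by a factor depending on $\lambda-8\pi m$; your description of the $\xi$-dependence should be corrected accordingly, although the stability argument is unaffected. Second, Definition \ref{stable} requires a functional $C^1$-close to $\varphi_m$ on the whole closed neighborhood $U$, so $\delta(\lambda,\xi)$ must be produced (and be $C^1$) for all $\xi\in U$, not only on $\mathcal{D}$; since the $\phi$-remainder is only $O(\delta^2|\log\delta|\,|\nabla\varphi_m(\xi)|_g^2)$, the scaling equation is solvable in the needed uniform way only where $|\nabla\varphi_m(\xi)|_g$ is small, and one must extend $\delta(\lambda,\cdot)$ to the rest of $U$ by a cutoff (as the paper does), checking afterwards that the critical point found in $U$ actually lies in the region where the genuine $\delta(\lambda,\xi)$ was used and converges, up to subsequences, to a point of $\mathcal{D}$ after shrinking $U$ so that $\nabla\varphi_m\neq0$ on $U\setminus\mathcal{D}$. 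This is also precisely where the assumption that \eqref{cond} holds on all of $U$, and not just on $\mathcal{D}$, is consumed; you correctly identified the control of the $\phi$-remainder against the $B(\xi)\delta^2$ term as the heaviest technical point, and the mechanism that saves it is exactly the factor $|\nabla\varphi_m(\xi)|_g^2$ in that remainder.
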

\noindent To deal a with stable critical set $\DD$ in the sense above, we need to require condition (\ref{cond}) on a neighborhood of $\DD$. In case we strengthen the stability assumption, we can relax the assumption (\ref{cond}) to hold just on $\DD$. As an instructive example, in Remark \ref{minmax}-(i) we present the case of a non-degenerate local minimum/maximum point.

\medskip \noindent We can now discuss the two previous examples for which
the coefficient $A(\xi)$ vanishes. For $S=\mathbb{S}^2$ and $m=1$,
there holds $\varphi_1=\frac{1}{4\pi} \log k+\hbox{const.}$ since $H(\xi,\xi)=\hbox{const.}$. In view of \eqref{casopartA}, assume that $\Delta_g k \geq
0$ in a small neighborhood $U$ of the minimal set
$\mathcal{D}=\{\xi \in S: \varphi_1(\xi)=\min_S \varphi_1\}$ so to have $A(\xi)\geq 0$ in $U$. We  just need to show that
$B(\xi)>0$ in $U$ so to use Theorem \ref{main2} with $\mathcal{D}$, which is clearly a stable critical set of
$\varphi_1$ as soon as $k$ is not a constant function. Up to take $U$ smaller, it is
clearly enough to show that $B(\xi)>0$ for all $\xi \in
\mathcal{D}$ with $A(\xi)=0$. Up to a rotation, we can assume that $\xi$ is the
south pole $P$ of $\mathbb{S}^2$. The stereographic projection
$\pi: (x,y,z) \to (\frac{2x}{1-z},\frac{2y}{1-z})$ through the
north pole is an isometry between $(\mathbb{S}^2 \setminus
\{\hbox{north pole}\},g_0)$ and
$(\mathbb{R}^2,\frac{16}{(4+u^2+v^2)^2} \delta_{\hbox{eucl}})$.
Since it is easily seen that
$\hbox{dist}\,(\pi^{-1}(u,v),P)=|(u,v)|$ and
$$G(\pi^{-1}(u,v),P)=-\frac{1}{2\pi} \log |(u,v)|+\frac{1}{4\pi} \log (4+u^2+v^2)+c_0,$$ in the coordinate system $\pi$ in terms of $\tilde k(u,v)=k(\pi^{-1}(u,v))$ we can write that
$$B(P)= 128 e^{c_0}  \lim_{r \to 0}  \int_{\mathbb{R}^2 \setminus B_r(0)} \frac{\tilde k(u,v)-\tilde k(0,0)}{(u^2+v^2)^2}dudv>0$$
in view of $k\geq k(P)$, $k\not= k(P)$. Similarly, we can treat  the case in which $\Delta_g k \leq 0$ does hold in a small neighborhood $U$ of the maximal set $\mathcal{D}$.\\
In the case of the flat torus $T$ with $N$ even,
$m=\frac{N}{2}$ and $k=e^{u_0}$, $u_0=-4\pi \sum_{j=1}^l n_j
G(x,p_j)$, at a c.p. $\xi$ of $\varphi_m$ the coefficient
$$B(\xi)= \lim_{r \to 0}\left[ 8 \int_{T \setminus \cup_{j=1}^m B_r(\xi_j)}   e^{u_0+8\pi \sum_{j=1}^m G(x,\xi_j)} dx-\frac{8\pi}{r^2} \sum_{j=1}^m \rho_j(\xi_j)  \right]$$
can be re-written in the following way:
\begin{itemize}
\item if $N=2$, $m=1$
$$B(\xi)= 8 e^{u_0(\xi)+8\pi H(\xi,\xi)} \left[ \int_T   \frac{e^{u_0(x)-u_0(\xi)+8\pi H(x,\xi)-8\pi H(\xi,\xi) }-1}{|x-\xi|^4} dx - \int_{\mathbb{R}^2\setminus  T} \frac{dx}{|x-\xi|^4}\right],$$
where the integral on $T$ is conditionally convergent in view of $\nabla(u_0(x)+8\pi H(x,\xi))\Big|_{x=\xi}=0$ and $\Delta (u_0(x)+8\pi H(x,\xi))\equiv 0$;
\item if $N\geq 4$ even, $m=\frac{N}{2}$
$$  B(\xi)=8  \sum_{j=1}^m \left[\int_{T_j } \frac{\rho_j(x)-\rho_j(\xi_j)}{|x-\xi_j|^4}dx-\rho_j(\xi_j) \int_{\mathbb{R}^2 \setminus T_j}\frac{dx}{|x-\xi_j|^4}\right],$$
where $T$ has been splitted into disjoint sets $T_1,\dots,T_m$ so that $B_r(\xi_j) \subset T_j$ for $r$ small and all $j$.
\end{itemize}
When $T$ is a rectangle, $l=1$ and $n_1=2$, the constant $B(\xi)$ has been used by Chen, Lin and Wang \cite{CLW} in the computation of the degree $d_{8\pi}$. The function $\varphi_1=\frac{u_0}{4\pi}+\hbox{const.}$ has exactly three non-degenerate critical points $\xi_1,\, \xi_2$ (saddle points) and $\xi_3$ (maximum point) with $B(\xi_1),\,B(\xi_2)>0$ and $B(\xi_3)<0$. By Theorem \ref{main2} and Remark \ref{minmax}-(i) we deduce the existence of
\begin{itemize}
\item two distinct families of solutions, for $\lambda$ in a small right neighborhood of $8\pi$, concentrating at $\xi_1$ and $\xi_2$ as $\lambda \to 8\pi $;
\item one family of solutions, for $\lambda$ in a small left neighborhood of $8\pi$, concentrating at $\xi_3$ as $\lambda \to 8\pi $.
\end{itemize}
Moreover, $B(\xi)$ has been recently used in the construction of non-topological condensates for the relativistic abelian Chern-Simons-Higgs model as the Chern-Simons parameter tends to zero, see \cite{LiYa}. Unfortunately, when $N\geq 4$ there are no examples where the sign of $B(\xi)$ can be determined.

\medskip \noindent To explain more clearly such a connection, recall that in the relativistic abelian Chern-Simon-Higgs model the $N$ vortex-condensates are gauge-periodic stationary matter configurations with finite-energy that, in the self-dual regime, express in terms of solutions for
\begin{equation} \label{CSoriginal}  -\Delta w=\frac{1}{\epsilon^2}e^w(1-e^w)-4\pi\sum_{j=1}^l
n_j \delta_{p_j}
\end{equation}
in a flat torus $T$. We refer to \cite{D} for a complete account on the model and to \cite{Tbook} for the analytical results concerning it. The quantity $2\epsilon >0$ is the Chern-Simons
parameter, $p_j\in S$, $j=1,\dots,l$, are distinct points and $n_j\in \mathbb{N}$. Physically, $\epsilon$ is
very small and two classes of solutions are relevant: either $e^w
\to 1$ as $\epsilon \to 0^+$ (``topological'' type) or $e^w \to 0$
as $\epsilon \to 0^+$ (``non-topological'' type). Topological solutions were first found by Caffarelli and Yang \cite{CY}. However, non-topological
condensates represent the main feature of the Chern-Simons-Higgs
model which were absent in the classical (Maxwell-Higgs) vortex
theory, whose existence was established by Tarantello \cite{T}. Through the change $w \to w-u_0$, $u_0=-4\pi \sum_{j=1}^l n_j G(x,p_j)$, the self-dual equation (\ref{CSoriginal}) reads
equivalently as
\begin{equation} \label{CS}  -\Delta w=\frac{1}{\epsilon^2}ke^w(1-ke^w)-\frac{4\pi N}{|T|}
\end{equation}
with $k=e^{u_0}$. Setting $c=\frac{1}{|T|}
\int_T w dx$ and $u=w-c \in \bar H$, an integration of
(\ref{CS}) provides a relation between $c$ and $u$ (see \cite{T}):
$$  e^c \int_T ke^u dx-e^{2c} \int_T k^2 e^{2u}dx=4\pi N \epsilon^2.$$
Hence, necessarily
$$u\in\ml{A}_\e=\bigg\{u\in \bar H \;\bigg|\; \bigg(\int_T k e^u\bigg)^2-16\pi N \epsilon^2 \int_T k^2
e^{2u}\ge 0\bigg\}$$ and then $c=c_\pm(u)$ with
$$e^{c_\pm(u)}=\frac{8\pi N \epsilon^2}{\int_T k e^u \mp \sqrt{(\int_T k e^u)^2-16\pi N \epsilon^2 \int_T k^2
e^{2u}}}.$$ For solutions of ``non-topological'' type it is
natural to choose $c_-(u)$, and then equation (\ref{CS}) reads in
terms of $u \in \ml{A}_\e$ as
\begin{eqnarray} \label{CSMF}  -\Delta u&=&4\pi
N\left(\frac{k e^u}{\int_T k e^u}-\frac{1}{|T|}\right) \nonumber \\
&&+ \frac{64 \pi^2N^2 \epsilon^2 \int_T k^2 e^{2u}}{\Big(\int_T k
e^u+\sqrt{(\int_T k e^u)^2-16\pi N\epsilon^2\int_T k^2
e^{2u}}\Big)^2}\left(\frac{ke^u}{\int_T k e^u}-\frac{k^2
e^{2u}}{\int_T k^2 e^{2u}}\right). \end{eqnarray} When $N$ is even
and $m=\frac{N}{2}$, equation (\ref{CSMF}) is a perturbation of
(\ref{mfeot})$_{\lambda=8 \pi m}$ as $\epsilon \to 0^+$. The
parallel becomes clear if we re-consider (\ref{mfeot}) itself as a
perturbation of (\ref{mfeot})$_{\lambda=8\pi m}$ as $\lambda \to
8\pi m$. As far as (\ref{mfeot}) is concerned,  the sign of the
perturbation can be chosen since it depends on $\lambda-8\pi m$.
For  (\ref{CSMF}) the sign of the perturbation is given and is like the case $\lambda<8\pi m$ in which we need to require (\ref{cond}) with the negative sign $<0$. Even if we always have the wrong sign $A(\xi)\geq 0$, the coefficient $A(\xi)$ behaves like $|\nabla \varphi_m(\xi)|_g^2:=\displaystyle \sum_{j=1}^m |\nabla_{\xi_j} \varphi_m(\xi)|_g^2$ and, near a critical set $\DD$ of $\varphi_m$, is very small. The condition $B(\xi)<0$ on $\DD$ will then be enough, as stated in the following:
\begin{theo} \label{main3}
Assume $N$ even. Let $\mathcal{D} \subset \subset (T \setminus \{p_1,\dots,p_l\})^m \setminus \Delta$ be a stable critical set of
$$\varphi_m (\xi)=\frac{1}{4\pi}\sum_{j=1}^m u_0(\xi_j )+\sum_{l\not= j} G(\xi_l,\xi_j).$$
Assume that $B(\xi)<0$ does hold in $\mathcal{D}$. Then, for all $\epsilon$ small there is a solution $w_\epsilon$ of \eqref{CSoriginal} which concentrates at $m$ points $q_1,\dots, q_m$, with $q=(q_1,\dots,q_m) \in \mathcal{D}$, as $\epsilon \to 0$ in the sense of measures:
$$\frac{1}{\epsilon^2}e^{w_\epsilon}(1-e^{w_\epsilon}) \rightharpoonup 8\pi
\sum_{j=1}^{m}\delta_{q_j}.$$
Correspondingly, there exist non-topological $N$ vortex-condensates of gauge potential $A_\epsilon$ and Higgs field $\phi_\epsilon$ for which the magnetic field $(F_{12})_\epsilon$ is very concentrated at the $m$ points $q_1,\dots,q_m$ (external to the so-called vortex-set $\{p_1,\dots, p_l \}$) as $\epsilon \to 0$.
\end{theo}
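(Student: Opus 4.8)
\medskip\noindent The plan is to reduce \eqref{CSoriginal} to the perturbed mean field equation \eqref{CSMF} and then to run, for the latter, the very same finite-dimensional Lyapunov--Schmidt reduction used in the proof of Theorem~\ref{main2}. Recall from the Introduction that, with $k=e^{u_0}$, $u_0=-4\pi\sum_{j=1}^l n_j G(\cdot,p_j)$, and $c=\frac{1}{|T|}\int_T w\,dx$, $u=w-c$, a non-topological solution $w$ of \eqref{CSoriginal} corresponds to $u\in\mathcal{A}_\epsilon\cap\bar H$ solving \eqref{CSMF} with $c=c_-(u)$, and that \eqref{CSMF} has the form
$$-\Delta u=4\pi N\Big(\frac{ke^u}{\int_T ke^u}-\frac{1}{|T|}\Big)+\epsilon^2\,E_\epsilon(u),$$
where $4\pi N=8\pi m$ and $\epsilon^2 E_\epsilon(u)$ is the explicit remainder in \eqref{CSMF}, which on a concentrating ansatz carries a \emph{definite} sign, mimicking the case $\lambda<8\pi m$ of Theorem~\ref{main2}. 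So the whole construction is the one of the sub-critical side $\lambda\uparrow 8\pi m$, with the role of $\lambda-8\pi m$ taken over by a sign-definite $\epsilon^2$-quantity.

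\medskip\noindent First I would fix a closed neighborhood $U$ of $\mathcal{D}$ in $(T\setminus\{p_1,\dots,p_l\})^m\setminus\Delta$ and, for $\xi=(\xi_1,\dots,\xi_m)\in U$ and small scaling parameters $\mu=(\mu_1,\dots,\mu_m)$ of order $\epsilon$, build the approximate solution $W_{\mu,\xi}=\sum_{j=1}^m PU_{\mu_j,\xi_j}$, where $U_{\mu_j,\xi_j}=\log\frac{8\mu_j^2}{(\mu_j^2+|x-\xi_j|^2)^2}$ is the standard Liouville bubble read in isothermal coordinates at $\xi_j$ and $PU_{\mu_j,\xi_j}$ is its correction, built out of $G(\cdot,\xi_j)$ and its regular part, so as to land in $\bar H$. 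Next I would estimate, in a suitable weighted norm, the error obtained by inserting $W_{\mu,\xi}$ into \eqref{CSMF}, invert the linearized operator on the orthogonal complement of the approximate kernel spanned by the bubbles' dilation and translation directions, and solve for the correction $\phi=\phi_{\mu,\xi}$ by a contraction argument, with the usual quantitative bounds. A feature proper to this problem is the constraint $u\in\mathcal{A}_\epsilon$: one has to check a posteriori that $u_\epsilon=W_{\mu,\xi}+\phi_{\mu,\xi}$ satisfies $(\int_T ke^{u_\epsilon})^2\ge 16\pi N\epsilon^2\int_T k^2 e^{2u_\epsilon}$ with room to spare, so that $c_-(u_\epsilon)$ is well defined and $w_\epsilon=u_\epsilon+c_-(u_\epsilon)+u_0$ is a genuine solution of \eqref{CSoriginal}; this holds because along the ansatz $\int_T ke^{u}$ stays bounded away from $0$ and $\infty$ while $\epsilon^2\int_T k^2 e^{2u}$ is kept under control for the correct choice of scaling.

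\medskip\noindent The core is the expansion of the reduced functional $F_\epsilon(\mu,\xi)$ associated to the construction: exactly as in Theorem~\ref{main2}, its $\xi$-derivative reproduces at leading order $\nabla\varphi_m(\xi)$ up to a non-vanishing factor, while the equations $\partial_{\mu_j}F_\epsilon=0$ are the bifurcation equations whose solvability is governed by the sign of $A(\xi)$ or, when $A(\xi)=0$, of $B(\xi)$. The decisive point, stressed in the Introduction, is that on the flat torus with $k=e^{u_0}$
$$A(\xi)=(4\pi)^3\sum_{j=1}^m\rho_j(\xi_j)\,|\nabla_{\xi_j}\varphi_m(\xi)|_g^2\ \ge\ 0$$
is comparable to $|\nabla\varphi_m(\xi)|_g^2$, hence \emph{negligible} on a small enough neighborhood of the critical set relative to the scale at which the $\epsilon^2$-perturbation operates: one first eliminates $\mu$ by solving, for each $\xi$ near $\mathcal{D}$, the $\mu$-equations using only $B(\xi)<0$ and the sign-definite perturbation, and the surviving finite-dimensional problem in $\xi$ becomes that of finding a critical point of a $C^1$-small perturbation of $\varphi_m$ on $U$, which exists by the stability of $\mathcal{D}$ (Definition~\ref{stable}). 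This produces (up to subsequences) $q\in\mathcal{D}$ and a solution $w_\epsilon$ of \eqref{CSoriginal}; the measure $\frac{1}{\epsilon^2}e^{w_\epsilon}(1-e^{w_\epsilon})$ converges to $8\pi\sum_{j=1}^m\delta_{q_j}$ because its total mass equals $4\pi N=8\pi m$ by integration of \eqref{CSoriginal} and the ansatz localizes a mass $8\pi$ around each $q_j$, while $e^{w_\epsilon}\to 0$ in $C_{\hbox{loc}}(T\setminus\{q_1,\dots,q_m\})$. Undoing the self-dual substitution finally yields the gauge potential $A_\epsilon$ and the Higgs field $\phi_\epsilon$, for which $(F_{12})_\epsilon=\frac{1}{2\epsilon^2}e^{w_\epsilon}(1-e^{w_\epsilon})$ concentrates at $q_1,\dots,q_m$.

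\medskip\noindent I expect the main obstacle to be precisely this last mechanism made quantitative: one must pin down the exact orders of the three competing quantities $A(\xi)\log\frac1{\mu_j}$, $B(\xi)$ and the $\epsilon^2$-contribution, \emph{uniformly} for $\xi$ in a full neighborhood of $\mathcal{D}$ and not just on $\mathcal{D}$, so that the scaling parameters can genuinely be eliminated and the reduced problem decouples into the $\varphi_m$-problem --- the subtle part being that $A\ge 0$ has the ``wrong'' sign and is beaten only by its smallness near $\mathcal{D}$. Secondary technical difficulties are the sharp expansion, on the bubble ansatz, of the perturbation term in \eqref{CSMF}, needed to fix the sign and the order of its contribution to the $\mu$-equations; the verification that $u_\epsilon\in\mathcal{A}_\epsilon$ for the chosen scaling; and the passage back to the gauge-theoretic variables $(A_\epsilon,\phi_\epsilon)$, where one checks that the concentration of $(F_{12})_\epsilon$ away from the vortex set $\{p_1,\dots,p_l\}$ is preserved.
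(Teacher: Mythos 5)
Your proposal is correct and follows essentially the paper's own route: pass to \eqref{CSMF}, run the same Lyapunov--Schmidt reduction on the bubble ansatz, expand the reduced energy (the paper obtains the extra sign-definite term $\tilde B(\xi)\,\epsilon^2/\delta^2$ with $\tilde B(\xi)>0$), eliminate the scaling parameter using $B(\xi)<0$ together with the smallness $A(\xi)=O(|\nabla\varphi_m(\xi)|_g^2)$ near $\mathcal{D}$ (Remark \ref{minmax}-(ii)), and conclude by the stability of $\mathcal{D}$. The only cosmetic differences are that the paper keeps a single dilation parameter $\delta$ (through $\delta_j^2=\delta^2\rho_j(\xi_j)$) instead of $m$ independent ones, and the balance $-B(\xi)\delta^2+\tilde B(\xi)\epsilon^2/\delta^2$ fixes the concentration scale at $\delta\sim\sqrt{\epsilon}$ rather than order $\epsilon$.
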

\noindent Theorem \ref{main3} slightly improves the result in \cite{LiYa} (see \cite{DEFM,DEM,LiYa0} for concentration at the vortices) where they just deal with isolated c.p.'s of $\varphi_m$ with non-trivial local degree. Our ``stability" assumption is more general as already explained in Definition \ref{stable}. Even if $\varphi_1$ has always the maximal set as a ``stable" critical set, a general existence result for $1-$point concentration does not follow since we don't know whether the coefficient $B(\xi)<0$ or not (apart from the case $l=1$, $n_1=2$, $T$ a rectangle).

\bigskip
\section{Approximation of the solution}
\noindent To construct approximating solutions of \eqref{mfeot},
the main idea is to use as ``basic cells'' the functions
\begin{equation*}
u_{\delta,\xi}(x)=u_0 \Big(\frac{|x-\xi|}{\delta}\Big)-2\log
\delta \qquad \de>0,\: \xi\in\R^2,\end{equation*} where
$$ u_0(r)=\log\frac{8}{(1+r^2)^2}.$$
They are all the solutions of
\begin{equation*}
\left\{ \begin{array}{ll}\Delta u+e^{u}=0 &\text{in $\R^2$}\\
\int_{\R^2} e^u <\infty, & \end{array} \right.
\end{equation*}
and do satisfy the following concentration property:
$$e^{u_{\delta,\xi}}\rightharpoonup 8\pi\delta_\xi
\quad\text{in measure sense}$$ as $\delta \to 0$. We will use now
isothermal coordinates to pull-back $u_{\delta,\xi}$ in $S$.

\medskip \noindent Let us recall that every Riemann surface $(S,g)$ is locally
conformally flat, and the local coordinates in which $g$ is
conformal to the Euclidean metric are referred to as isothermal
coordinates (see for example the simple existence proof provided
by Chern \cite{Chern}). For every $\xi \in S$ it amounts to find a
local chart $y_\xi$, with $y_\xi(\xi)=0$, from a neighborhood of
$\xi$ onto $B_{2r_0}(0)$ (the choice of $r_0$ is independent of
$\xi$) in which $g=e^{\hat \varphi_\xi(y_\xi(x))}dx$, where $\hat
\varphi_\xi \in C^\infty(B_{2r_0}(0),\mathbb{R})$. In particular,
$\hat \varphi_\xi$ relates with the Gaussian curvature $K$ of
$(S,g)$ through the relation:
\begin{equation} \label{equationvarphi}
\Delta \hat \varphi_\xi(y) =-2K(y_\xi^{-1}(y)) e^{\hat
\varphi_\xi(y)} \qquad \hbox{ for }y \in B_{2r_0}(0).
\end{equation}
We can also assume that $y_\xi$, $\hat \varphi_\xi$ depends
smoothly in $\xi$ and that $\hat \varphi_\xi(0)=0$, $\nabla \hat
\varphi_\xi(0)=0$.

\medskip \noindent  We now pull-back $u_{\delta,0}$ in $\xi \in S$, for $\delta>0$, by simply
setting
$$U_{\delta,\xi}(x)=u_{\delta,0}(y_\xi(x))=\log \frac{8\delta^2}{(\delta^2+|y_\xi(x)|^2)^2}$$ for $x
\in y_\xi^{-1}(B_{2r_0}(0))$. Letting $\chi\in
C_0^\infty(B_{2r_0}(0))$ be a radial cut-off function so that
$0\le\chi\le 1$, $\chi\equiv 1$ in $B_{r_0}(0)$, we introduce the
function $PU_{\de,\xi}$ as the unique solution of
\begin{equation}\label{ePu}
\left\{ \begin{array}{ll} -\Delta_g PU_{\de,\xi} (x)=\chi_\xi(x)
e^{-\varphi_\xi(x)} e^{U_{\de,\xi}(x)}-\frac{1}{|S|}\int_S
\chi_\xi e^{-\varphi_\xi} e^{U_{\de,\xi}} dv_g &\text{in }S\\
\int_S PU_{\de,\xi} dv_g=0,
\end{array}\right.
\end{equation}
where $\chi_\xi(x)=\chi(|y_\xi(x)|)$ and $\varphi_\xi(x)=\hat
\varphi_\xi(y_\xi(x))$. Notice that the R.H.S. in (\ref{ePu}) has
zero average and smoothly depends in $x$, and then (\ref{ePu}) is
uniquely solvable by a smooth solution $PU_{\de,\xi}$.

\medskip \noindent Let us recall the transformation law for $\Delta_g$ under
conformal changes: if $\tilde g=e^{\varphi} g$, then
\begin{equation} \label{laplacian} \Delta_{\tilde g}=e^{-\varphi} \Delta_g.\end{equation}
Decompose now the Green function $G(x,\xi)$, $\xi \in S$, as
$$G(x,\xi)=-\frac{1}{2\pi} \chi_\xi(x) \log |y_\xi(x)|+H(x,\xi),$$
and by (\ref{green}) then deduce that
\begin{equation*}
\left\{ \begin{array}{ll} -\Delta_g H= - \frac{1}{2\pi} \lab
\chi_\xi  \,\log |y_\xi(x)| -\frac{1}{\pi}\langle \grad
\chi_\xi,\grad \log
|y_\xi(x)| \rangle_g-\frac{1}{|S|} &\text{in $S$}\\
\int_S H(\cdot,\xi)\, dv_g=\frac{1}{2\pi} \int_S \chi_\xi \log
|y_\xi(\cdot)| dv_g.&
\end{array} \right.
\end{equation*}
We have used that
$$\Delta_g \log |y_\xi(x)|= e^{-\hat \varphi_\xi(y)}
\Delta \log|y| \Big|_{y=y_\xi(x)}=2\pi \delta_\xi$$ in view of
(\ref{laplacian}).

\medskip \noindent For $r\leq 2r_0$ define
$B_r(\xi)=y_\xi^{-1}(B_r(0))$, $A_{r}(\xi)=B_{r}(\xi) \sm
B_{r/2}(\xi)$, and set
$$f_\xi= {\lab\chi_\xi \over |y_\xi(x)|^2} +2\Big\langle \grad\chi_\xi,\grad |y_\xi(x)|^{-2} \Big\rangle_g+{2\over |S|}
\int_{\mathbb{R}^2} {\chi'(|y|)\over |y|^3}\, dy.$$
By (\ref{Psi}) it follows that
$$\int_S f_\xi dv_g=\frac{1}{2\delta^2} \int_S \Delta_g \Psi_{\delta,\xi} dv_g+O(\delta^2)=O(\delta^2)$$
as $\delta \to 0$, where $\Psi_{\delta,\xi} \in H^1(S)$ is defined in \eqref{psidx}. Thus, $\int_S f_\xi dv_g=0$, and then
$F_\xi$ is well defined as the unique solution of
\begin{equation}\label{d2t} \left\{ \begin{array}{ll}-\lab
F_\xi=f_\xi &\text{in }S\\
\int_S F_\xi dv_g=0.&
\end{array}\right. \end{equation}
We have the following asymptotic expansion of $PU_{\de,\xi}$ as
$\delta \to 0$:
\begin{lem}\label{ewfxi}
The function $PU_{\delta,\xi}$ satisfies
$$PU_{\delta,\xi}=\chi_\xi \lf[U_{\delta,\xi}-\log(8\delta^2)\rg]+
8\pi H(x,\xi)+\alpha_{\delta,\xi}-2\delta^2 F_\xi+O(\delta^4|\log
\delta|)$$
uniformly in $S$, where $F_\xi$ is given in
\eqref{d2t} and
$$\alpha_{\delta,\xi}=-{4\pi\over|S|} \delta^2 \log \delta +2{\delta^2\over|S|}\lf(\int_{\mathbb{R}^2}
\chi(|y|) \frac{e^{\hat \varphi_\xi(y)}-1}{|y|^2}dy+ \pi- \int_{\mathbb{R}^2} {\chi'(|y|) \log |y|\over
|y| } dy \rg).$$
In particular, there holds
$$PU_{\delta,\xi}=8\pi G(x,\xi)-2{\de^2 \chi_\xi \over
|y_\xi(x)|^2}+\alpha_{\delta,\xi}-2 \delta^2 F_\xi+O(\delta^4|\log \delta|)$$
locally uniformly in $S \sm\{\xi\}$.
\end{lem}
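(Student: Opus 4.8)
\medskip\noindent The plan is to subtract the claimed expansion from $PU_{\delta,\xi}$ and estimate the remainder by elliptic regularity. Set
\[
R_{\delta,\xi}:=PU_{\delta,\xi}-\Big(\chi_\xi\big[U_{\delta,\xi}-\log(8\delta^2)\big]+8\pi H(\cdot,\xi)+\alpha_{\delta,\xi}-2\delta^2 F_\xi\Big)\in C^\infty(S),
\]
so that the assertion amounts to $\|R_{\delta,\xi}\|_{L^\infty(S)}=O(\delta^4|\log\delta|)$. Since $\Delta_g R_{\delta,\xi}$ has zero mean over the compact surface $S$, standard elliptic estimates for $-\Delta_g$ give $\big\|R_{\delta,\xi}-\tfrac1{|S|}\int_S R_{\delta,\xi}\,dv_g\big\|_{L^\infty(S)}\le C\,\|\Delta_g R_{\delta,\xi}\|_{L^\infty(S)}$, so it is enough to prove \emph{(a)} $\|\Delta_g R_{\delta,\xi}\|_{L^\infty(S)}=O(\delta^4)$ and \emph{(b)} $\int_S R_{\delta,\xi}\,dv_g=O(\delta^4|\log\delta|)$. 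Granting this, the last displayed identity of the Lemma follows at once: on a compact subset of $S\setminus\{\xi\}$ the quantity $|y_\xi|$ is bounded away from $0$, hence $\chi_\xi[U_{\delta,\xi}-\log(8\delta^2)]=-4\chi_\xi\log|y_\xi|-2\delta^2\chi_\xi|y_\xi|^{-2}+O(\delta^4)$, and adding $8\pi H(\cdot,\xi)=8\pi G(\cdot,\xi)+4\chi_\xi\log|y_\xi|$ collapses the expansion to $8\pi G(x,\xi)-2\delta^2\chi_\xi|y_\xi|^{-2}+\alpha_{\delta,\xi}-2\delta^2F_\xi+O(\delta^4|\log\delta|)$.

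\medskip\noindent For \emph{(a)} I would compute $-\Delta_g$ of the approximating function term by term. Pulling $U_{\delta,\xi}$ back through $y_\xi$ and using $\Delta u_{\delta,0}+e^{u_{\delta,0}}=0$ together with \eqref{laplacian} gives $-\Delta_g U_{\delta,\xi}=e^{-\varphi_\xi}e^{U_{\delta,\xi}}$. In $B_{r_0}(\xi)$ the cut-off is constant, so there $-\Delta_g(\chi_\xi[U_{\delta,\xi}-\log(8\delta^2)])=\chi_\xi e^{-\varphi_\xi}e^{U_{\delta,\xi}}$ exactly, and outside $B_{2r_0}(\xi)$ both sides vanish. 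On the annulus $A_{2r_0}(\xi)$, where $|y_\xi|\ge r_0$, I would Taylor-expand in $C^2$ the identity $U_{\delta,\xi}-\log(8\delta^2)=-2\log(\delta^2+|y_\xi|^2)=-4\log|y_\xi|-2\delta^2|y_\xi|^{-2}+O(\delta^4)$, insert it into $\Delta_g(\chi_\xi\,v)=\chi_\xi\Delta_g v+2\langle\nabla\chi_\xi,\nabla v\rangle_g+v\,\Delta_g\chi_\xi$, and add $8\pi H$ using the equation for $H$ recorded before the statement; the $\log|y_\xi|$ and $\langle\nabla\chi_\xi,\nabla\log|y_\xi|\rangle_g$ contributions cancel identically, leaving
\[
-\Delta_g\big(\chi_\xi[U_{\delta,\xi}-\log(8\delta^2)]+8\pi H\big)=\chi_\xi e^{-\varphi_\xi}e^{U_{\delta,\xi}}-\frac{8\pi}{|S|}+2\delta^2 f_\xi-\frac{4\delta^2}{|S|}\int_{\mathbb{R}^2}\frac{\chi'(|y|)}{|y|^3}\,dy+O(\delta^4);
\]
adding $-\Delta_g(-2\delta^2F_\xi)=-2\delta^2f_\xi$ then removes the $f_\xi$-term. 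Finally the total mass is computed by pulling back, the conformal factors cancelling: $\int_S\chi_\xi e^{-\varphi_\xi}e^{U_{\delta,\xi}}\,dv_g=\int_{\mathbb{R}^2}\chi(|y|)\frac{8\delta^2}{(\delta^2+|y|^2)^2}\,dy$, and the substitution $y=\delta z$ followed by one integration by parts against $\chi$ gives $8\pi+4\delta^2\int_{\mathbb{R}^2}\tfrac{\chi'(|y|)}{|y|^3}\,dy+O(\delta^4)$. Comparing with \eqref{ePu} yields $-\Delta_g R_{\delta,\xi}=O(\delta^4)$ uniformly, which is \emph{(a)}.

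\medskip\noindent For \emph{(b)}, from $\int_S PU_{\delta,\xi}\,dv_g=\int_S F_\xi\,dv_g=0$ we get $\int_S R_{\delta,\xi}\,dv_g=-\int_S\chi_\xi[U_{\delta,\xi}-\log(8\delta^2)]\,dv_g-8\pi\int_S H(\cdot,\xi)\,dv_g-\alpha_{\delta,\xi}|S|$. Using $\int_S H(\cdot,\xi)\,dv_g=\tfrac1{2\pi}\int_S\chi_\xi\log|y_\xi|\,dv_g$ and pulling back, the first two terms combine into $2\int_{\mathbb{R}^2}\chi(|y|)\log(1+\delta^2/|y|^2)\,e^{\hat\varphi_\xi(y)}\,dy$. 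I would evaluate this by writing $e^{\hat\varphi_\xi}=1+(e^{\hat\varphi_\xi}-1)$: for the $1$-part, the rescaling $y=\delta z$ and an integration by parts against $\chi$ produce the leading term $-2\pi\delta^2\log\delta$, the constant $\delta^2\big(\pi-\int_{\mathbb{R}^2}\tfrac{\chi'(|y|)\log|y|}{|y|}\,dy\big)$, and an $O(\delta^4)$ remainder; for the $(e^{\hat\varphi_\xi}-1)$-part, since $e^{\hat\varphi_\xi}-1$ vanishes to second order at $0$ (recall $\hat\varphi_\xi(0)=0$, $\nabla\hat\varphi_\xi(0)=0$) and $\chi$ kills the tail, one obtains $\delta^2\int_{\mathbb{R}^2}\chi(|y|)\tfrac{e^{\hat\varphi_\xi(y)}-1}{|y|^2}\,dy+O(\delta^4|\log\delta|)$, the logarithm in the error coming from $\int_{\delta\le|y|\le2r_0}|y|^{-4}\cdot|y|^2\,dy$. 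Summing the two contributions shows $2\int_{\mathbb{R}^2}\chi\log(1+\delta^2/|y|^2)e^{\hat\varphi_\xi}\,dy=\alpha_{\delta,\xi}|S|+O(\delta^4|\log\delta|)$, which is precisely \emph{(b)}.

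\medskip\noindent The main obstacle is the sharp evaluation in step \emph{(b)}: one must pin down simultaneously the $\delta^2\log\delta$-coefficient and the entire $O(\delta^2)$-constant of $2\int_{\mathbb{R}^2}\chi\log(1+\delta^2/|y|^2)e^{\hat\varphi_\xi}\,dy$ so that they cancel $\alpha_{\delta,\xi}|S|$ down to order $O(\delta^4|\log\delta|)$, which forces one to treat the transition scale $|y|\sim\delta$ — where the expansion of $\log(1+\delta^2/|y|^2)$ breaks down — on its own, and it is exactly there that the loss from $O(\delta^4)$ to $O(\delta^4|\log\delta|)$ occurs. The computations in \emph{(a)} (the exact cancellation of the $\log|y_\xi|$ and $f_\xi$ terms, and matching the $\delta^2$-coefficient of the mass with the one produced by $H$) are routine but need to be carried out with care.
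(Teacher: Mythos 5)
Your proposal is correct and follows essentially the same route as the paper: the paper sets $\Psi_{\de,\xi}=PU_{\de,\xi}-\chi_\xi\hat U_{\de,\xi}-8\pi H(\cdot,\xi)$, shows $-\Delta_g\Psi_{\de,\xi}=-2\de^2 f_\xi+O(\de^4)$, invokes elliptic regularity, and then computes $\int_S\Psi_{\de,\xi}\,dv_g$ to produce exactly your $-4\pi\de^2\log\de+2\de^2[\,\cdots]$ constant $\alpha_{\de,\xi}|S|$ with the same $O(\de^4|\log\de|)$ loss at the scale $|y|\sim\de$. Your only difference is packaging (subtracting $\alpha_{\de,\xi}-2\de^2F_\xi$ at the outset rather than identifying them a posteriori), and the Laplacian and mean-value computations you outline, including the cancellation against $f_\xi$ and the mass $8\pi+4\de^2\int\chi'|y|^{-3}dy$, coincide with the paper's.
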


\begin{proof}[\dem] Let us define
\begin{equation}\label{psidx}
\Psi_{\de,\xi}(x)= PU_{\de,\xi}(x)-\chi_\xi \hat U_{\delta,\xi}-8\pi H(x,\xi),
\end{equation}
where $\hat
U_{\de,\xi}=U_{\de,\xi}-\log(8\delta^2)$, for which there holds
\begin{equation*}
\int_S \Psi_{\de,\xi} dv_g= -\int_{S} \lf[\chi_\xi\hat
U_{\de,\xi}+ 8\pi H(x,\xi)\rg]\, dv_g=- \int_{S}
\chi_\xi\log{|y_\xi(x)|^4 \over(\delta^2 + |y_\xi(x)|^2)^2}
\,dv_g.
\end{equation*}
Since $\hat U_{\de,\xi}$ satisfies in $B_{2r_0}(\xi)$
$$-\Delta_g \hat U_{\de,\xi}=-e^{-\hat \varphi_\xi(y)} \Delta
u_{\delta,0} \Big|_{y=y_\xi(x)}= e^{-\varphi_\xi
}e^{U_{\de,\xi}}$$ in view of (\ref{laplacian}), by the equation
of $H(x,\xi)$ now we have that
\begin{eqnarray*}
-\Delta_g \Psi_{\de,\xi} &=& 2\lf\langle\grad\chi_\xi,\grad\hat
U_{\de,\xi}+4 \grad \log |y_\xi(x)|
\rg\rangle_g+\lap_g\chi_\xi\lf(\hat U_{\de,\xi}+4\log
|y_\xi(x)|\rg)\\
&&+{1\over|S|}\lf(8\pi- \int_S \chi_\xi e^{-\varphi_\xi}
e^{U_{\de,\xi}}\,dv_g \rg).
\end{eqnarray*}
Also, we have that in $A_{2r_0}(\xi)$
$$\hat U_{\de,\xi}+4\log |y_\xi(x)|=\log\frac{|y_\xi(x)|^4}{(\de^2+|y_\xi(x)|^2)^2}=-2{\de^2\over |y_\xi(x)|^2}+O(\de^4)$$
and
$$\nabla(\hat U_{\de,\xi}+4\log |y_\xi(x)|)=-2 \de^2 \nabla |y_\xi(x)|^{-2}+O(\de^4),$$
and there holds
\bebs \int_S \chi_\xi e^{-\varphi_\xi}
e^{U_{\de,\xi}}dv_g&=\int_{B_{2r_0}(0)\setminus
B_{r_0}(0)}\chi(|y|) {8\de^2\over
|y|^4}\,dy+O(\de^4)+\int_{B_{r_0}(0)}{8\de^2\over(\de^2+|y|^2)^2}\,dy\\
&= 8\pi-8\de^2\lf({\pi\over r_0^2}-\int_{B_{2r_0}(0)\setminus
B_{r_0}(0)}\frac{\chi(|y|)} {|y|^4} dy\rg)+O(\de^4)\\
&=8\pi+4 \de^2 \int_{\mathbb{R}^2} \frac{\chi'(|y|)}{|y|^3}
dy+O(\de^4)
\end{split}\ee in
view of $dv_g=e^{\hat \varphi_\xi} dy$ in the coordinate system
$y_\xi$ and
\begin{eqnarray*}
\int_{B_{2r_0}(0)\setminus B_{r_0}(0)}\frac{\chi(|y|)}{|y|^4}
dy=2\pi\int_{r_0}^{2r_0} \frac{\chi(r)}{r^3}dr=
\frac{\pi}{r_0^2}+\frac{1}{2}\int_{B_{2r_0}(0)\setminus
B_{r_0}(0)} \frac{\chi'(|y|)}{|y|^3}dy.
\end{eqnarray*}
By the definition of $f_\xi$ we then have that
\begin{equation} \label{Psi}
-\Delta_g \Psi_{\de,\xi}=-2\de^2 f_\xi+O(\de^4)\qquad \hbox{ in }
S. \end{equation}
Since $\int_S F_\xi dv_g=0$, by elliptic
regularity theory we get that
$$\Psi_{\de,\xi}=-2\delta^2 F_\xi+\frac{1}{|S|}\int_S
\Psi_{\delta,\xi}dv_g+O(\delta^4),$$ in view of \eqref{d2t}. On
the other hand, we have that
\begin{eqnarray*}
\int_S \Psi_{\de,\xi} dv_g&=& -\int_{S} \chi_\xi
\log\frac{|y_\xi(x)|^4}{(\delta^2 + |y_\xi(x)|^2)^2}\,dv_g=2
\int_{B_{r_0}(0)}
\log\frac{\delta^2 + |y|^2}{|y|^2}e^{\hat \varphi_\xi(y)}dy\\
&&+\int_{B_{2r_0}(0)\setminus B_{r_0}(0)}
\chi(|y|)\lf(2{\de^2\over |y|^2}+O(\de^4)\rg)e^{\hat
\varphi_\xi(y)}dy\\
&&=2\int_{B_{r_0}(0)} \log\frac{\delta^2 + |y|^2}{|y|^2}e^{\hat
\varphi_\xi(y)}dy+2\de^2\int_{\mathbb{R}^2\setminus B_{r_0}(0)} \chi(|y|) {e^{\hat \varphi_\xi(y)}-1 \over
|y|^2 }  dy\\
&&-4\pi \delta^2 \log r_0- 2\delta^2 \int_{\mathbb{R}^2} {\chi'(|y|) \log |y|\over
|y| } dy+O(\delta^4)
\end{eqnarray*}
in view of
\begin{equation} \label{chiovery2}
\int_{B_{2r}(0)\setminus B_{r}(0)}  {\chi(|y|) \over
|y|^2 }  dy=2\pi \int_{r}^{2r}  {\chi(t) \over
t }  dt= -2\pi \log r- \int_{B_{2r}(0)\setminus B_{r}(0)} {\chi'(|y|) \log |y|\over
|y| } dy
\end{equation}
for $r \leq r_0$. Since
\begin{eqnarray*}
\int_{B_{r_0}(0)} \log\frac{\delta^2 + |y|^2}{|y|^2}dy&=&\delta^2
\int_{B_{r_0/\delta}(0)}\log\frac{1 + |z|^2}{|z|^2}dz=2\pi
\delta^2 \int_0^\infty \Big[\log\frac{1 +
r^2}{r^2}-\frac{1}{r^2+1}\Big] r
dr\\
&&+\pi \delta^2 \log\Big(\frac{r_0^2}{\delta^2}+1\Big)+O(\delta^4)
\end{eqnarray*}
where $y=\delta z$, by $e^{\hat \varphi_\xi(y)}=1+O(|y|^2)$ we can
write that
\begin{eqnarray*}
&&2\int_{B_{r_0}(0)} \log\frac{\delta^2 + |y|^2}{|y|^2}e^{\hat
\varphi_\xi(y)}dy=2\int_{B_{r_0}(0)} \log\Big(
\frac{\delta^2}{|y|^2}+1\Big)(e^{\hat \varphi_\xi(y)}-1)dy-4 \pi
\delta^2 \log
\delta\\
&&+ 4\pi \delta^2\bigg[\log r_0+ \int_0^\infty \Big(\log\frac{1 +
r^2}{r^2}-\frac{1}{r^2+1}\Big) r dr\bigg]+O(\delta^4)\\
&&=2\delta^2 \int_{B_{r_0}(0)} \frac{e^{\hat
\varphi_\xi(y)}-1}{|y|^2}dy-4 \pi \delta^2 \log \delta + 4\pi
\delta^2\bigg[\log r_0+ \int_0^\infty \Big(\log\frac{1 +
r^2}{r^2}-\frac{1}{r^2+1}\Big) r dr\bigg]\\
&&+O(\delta^4|\log \delta|)
\end{eqnarray*}
by using that
\begin{eqnarray*}
&&\int_{B_{r_0}(0)} \bigg[\log\Big(
\frac{\delta^2}{|y|^2}+1\Big)-{\de^2\over |y|^2}\bigg](e^{\hat \varphi_\xi(y)}-1)dy=O\bigg(\delta^4 \int_{B_{\frac{r_0}{\delta}}(0)} \bigg| \log\Big(
\frac{1}{|y|^2}+1\Big)-{1\over |y|^2}\bigg|\,|y|^2\,dy\bigg)\\
&&=O\bigg(\delta^4 \int_{B_{\frac{r_0}{\delta}}(0)\setminus B_1(0)} \bigg| \log\Big(
\frac{1}{|y|^2}+1\Big)-{1\over |y|^2}\bigg|\,|y|^2\,dy\bigg)+O(\delta^4)=O(\delta^4|\log \delta|)
\end{eqnarray*}
in view of $\log(\frac{1}{|y|^2}+1)-{1\over |y|^2}=O({1\over |y|^4})$ as $|y|\to +\infty$. In conclusion, we get that
\begin{eqnarray*}
\int_S \Psi_{\de,\xi} dv_g= -4 \pi \delta^2 \log
\delta +2\delta^2 \left[\int_{\mathbb{R}^2}
\chi(|y|) \frac{e^{\hat \varphi_\xi(y)}-1}{|y|^2}dy+ \pi- \int_{\mathbb{R}^2} {\chi'(|y|) \log |y|\over
|y| } dy\right]+O(\delta^4 |\log \delta|)
\end{eqnarray*}
in view of $\int_0^\infty (\log\frac{1 +r^2}{r^2}-\frac{1}{r^2+1}) r dr=\frac{1}{2}.$ This completes the
proof.
\end{proof}

\noindent The ansatz will be constructed as follows. Given $m \in
\mathbb{N}$, let us consider distinct points $\xi_j \in \tilde S$
(i.e. $\xi_j \in S$ with $k(\xi_j)>0$) and $\delta_j>0$,
$j=1,\dots,m$. In order to have a good approximation, we will
assume that
\begin{equation}\label{repla0}
\delta_j^2=\delta^2 \rho_j(\xi_j)\quad \forall \,j=1,\dots,m,
\end{equation}
and
\begin{equation}\label{repla1}
\exists\, C>1\,:\,|\lambda-8\pi m |\le C \de^2|\log \de|,
\end{equation}
where $\de>0$ and $\rho_j$ is as in (\ref{ro}). Up to take $r_0$
smaller, we assume that the points $\xi_j$'s are well separated
and $k(\xi_j)$ is uniformly far from zero, namely, we choose
$\xi=(\xi_1,\dots,\xi_m)\in\Xi$, where
\begin{equation*}
\Xi=\{(\xi_1,\dots,\xi_m) \in S^m \mid d_g(\xi_i,\xi_j)\geq
4r_0\hbox{ and }k(\xi_j)\ge r_0
\:\:\forall\:i,j=1,\dots,m,\:i\not=j\}.
\end{equation*}
Denote $U_j:= U_{\delta_j,\xi_j}$ and $W_j=PU_j$, $j=1,\dots,m$,
where $P$ is the projection operator defined by \eqref{ePu}. Thus,
our approximating solution is $W(x)= \displaystyle \sum_{j=1}^m
W_j(x)$, parametrized by $(\de,\xi) \in (0,\infty) \times \Xi$.
Notice that for $r_0$ small enough we have that
$\ml{D}\subset\Xi\subset \ti S^m\sm\Delta$. We will look for a
solution $u$ of \eqref{mfeot} in the form $u=W+\phi$, for some
small remainder term $\phi$. In terms of $\phi$, the problem
\eqref{mfeot} is equivalent to find $\phi\in \bar H$ so that
\begin{equation}\label{ephi}
L(\phi)=-[R+N(\phi)] \qquad\text{ in $S$},
\end{equation}
where the linear operator $L$ is defined as
\begin{equation}\label{ol}
L(\phi) = \Delta_g \phi + \lambda {ke^{W}\over\int_S
ke^{W}dv_g}\lf(\phi - {\int_{S} ke^{W}\phi dv_g \over\int_S
ke^{W}dv_g} \rg),
\end{equation}
the nonlinear part $N$ is given by
\begin{equation}\label{nlt}
N(\phi)=\lambda \lf({ke^{W+\phi}\over\int_S
ke^{W+\phi}dv_g}-{ke^{W}\phi\over\int_S
ke^{W}dv_g}+\frac{ke^{W}\int_{S} ke^{W}\phi dv_g}{\lf(\int_S ke^W
dv_g\rg)^2}-{ke^{W}\over\int_S ke^{W}dv_g }\rg)
\end{equation}
and the approximation rate of $W$ is encoded in
\begin{equation}\label{R} R=\Delta_g W+\lambda
\lf({ke^{W}\over\int_S ke^{W}dv_g} - {1\over |S|}\rg).
\end{equation}
Notice that for all $\phi \in \bar H$
$$\int_S L(\phi) dv_g=\int_S N(\phi)dv_g=\int_S R dv_g=0.$$

\noindent In order to get the invertibility of $L$, let us
introduce the weighted norm
$$\| h \|_*=\sup_{x\in S} \lf[\sum_{j=1}^m \frac{\de_j^\sigma}{(\de_j^2 + \chi_{B_{r_0}(\xi_j)}(x) |y_{\xi_j}(x)|^2+r_0^2 \chi_{S\setminus B_{r_0}(\xi_j)}(x))^{1+\sigma/2}}\rg]^{-1} |h(x)|$$
for any $h\in L^\infty(S)$, where $0<\sigma<1$ is a small fixed
constant and $\chi_A$ denotes the characteristic function of the set $A$. Let us
evaluate the approximation rate of $W$ in $\|\cdot\|_*$:

\begin{lem}\label{estrr0}
Assume \eqref{repla0}-\equ{repla1}. There exists a constant
$C>0$, independent of $\de>0$ small, \st
\begin{equation}\label{re}
\|R\|_*\le  C\left(\delta |\nabla \varphi_m(\xi)|_g+\de^{2-\sigma}|\log \de| \right)
\end{equation}
for all $\xi \in \Xi$, where $|\nabla \varphi_m(\xi)|_g^2$ stands for $\displaystyle \sum_{j=1}^m |\nabla_{\xi_j} \varphi_m(\xi)|_g^2$.
\end{lem}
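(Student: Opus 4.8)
The plan is to estimate $R=\Delta_g W+\lambda\big(\frac{ke^W}{\int_S ke^W dv_g}-\frac1{|S|}\big)$ directly from the asymptotic expansion of each $W_j=PU_{\delta_j,\xi_j}$ provided by Lemma \ref{ewfxi}, combined with the matching conditions \eqref{repla0}--\eqref{repla1}. First I would compute $\Delta_g W=\sum_{j=1}^m \Delta_g W_j$ using \eqref{ePu}: each $\Delta_g W_j=-\chi_{\xi_j}e^{-\varphi_{\xi_j}}e^{U_j}+\frac{1}{|S|}\int_S\chi_{\xi_j}e^{-\varphi_{\xi_j}}e^{U_j}dv_g$, so that modulo the harmless constant terms (of size $O(\delta^2)$, contributing to the $\delta^{2-\sigma}|\log\delta|$ error after dividing by the weight) we have $\Delta_g W\approx -\sum_j \chi_{\xi_j}e^{-\varphi_{\xi_j}}e^{U_j}$. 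Then I would expand $ke^W$: near a fixed $\xi_i$, using Lemma \ref{ewfxi} one writes $W_i=U_i-\log(8\delta_i^2)+8\pi H(x,\xi_i)+O(\delta^2|\log\delta|)$ and $W_j=8\pi G(x,\xi_j)+O(\delta^2|\log\delta|)$ for $j\neq i$, so that $ke^W\approx k\,e^{U_i-\log(8\delta_i^2)}e^{8\pi H(x,\xi_i)+8\pi\sum_{j\neq i}G(x,\xi_j)}=\frac{\rho_i(x)}{8\delta_i^2}e^{U_i}$ up to lower-order factors. With the choice $\delta_i^2=\delta^2\rho_i(\xi_i)$ from \eqref{repla0}, the leading coefficient becomes $\frac{\rho_i(x)}{\rho_i(\xi_i)}e^{U_i}/(8\delta^2)$, and $\frac{\rho_i(x)}{\rho_i(\xi_i)}=1+\nabla\log\rho_i(\xi_i)\cdot y_{\xi_i}(x)+O(|y_{\xi_i}(x)|^2)$.

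Next, I would put these pieces together inside the ball $B_{r_0}(\xi_i)$ where the concentration occurs. The total integral $\int_S ke^W dv_g$ is computed to be $\frac{8\pi}{8\delta^2}\sum_j\rho_j(\xi_j)\cdot(1+O(\delta|\log\delta|))$ — the leading term being $\pi m/\delta^2$ after the $e^{U_i}$ factors integrate to $8\pi$ each over $\mathbb{R}^2$ — actually I should track this as $\frac{\sum_j\rho_j(\xi_j)}{\delta^2}\cdot\pi+\dots$. Hence $\lambda\frac{ke^W}{\int_S ke^W dv_g}\approx\lambda\frac{\rho_i(x)/\rho_i(\xi_i)\,e^{U_i}}{8\pi\sum_j\rho_j(\xi_j)}$, and since $\lambda\to 8\pi m$ and in each ball the dominant $e^{U_j}$-contribution to the sum $\sum_j\rho_j(\xi_j)$ is... here one must be careful: the cleanest bookkeeping is to note that $\lambda\frac{ke^W}{\int ke^W}-\sum_j\chi_{\xi_j}e^{-\varphi_{\xi_j}}e^{U_j}$ is what multiplies against $R$, and the $e^{U_i}$ leading terms cancel when $\lambda=8\pi m$ exactly, leaving: (a) a term proportional to $(\lambda-8\pi m)\cdot$ (concentrated profile), controlled by \eqref{repla1} to be $O(\delta^2|\log\delta|)$ relative to the weight; (b) a term from $\frac{\rho_i(x)}{\rho_i(\xi_i)}-1$, whose linear part $\nabla\log\rho_i(\xi_i)\cdot y_{\xi_i}(x)$ produces after rescaling $y=\delta_i z$ a contribution of size $\delta|\nabla\log\rho_i(\xi_i)|$ against the weight, and whose quadratic part is $O(\delta^2)$ relative to the weight; (c) higher-order remainders from Lemma \ref{ewfxi} and from $e^{-\varphi_{\xi_i}}=1+O(|y_{\xi_i}|^2)$, all $O(\delta^{2-\sigma}|\log\delta|)$. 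The key identity linking $\nabla\log\rho_j(\xi_j)$ to $\nabla\varphi_m$ is that $\varphi_m(\xi)=\frac1{4\pi}\sum_j\log k(\xi_j)+\sum_jH(\xi_j,\xi_j)+\sum_{l\neq j}G(\xi_l,\xi_j)$ satisfies $4\pi\nabla_{\xi_j}\varphi_m(\xi)=\nabla\log\rho_j(\xi_j)+$ (a symmetric correction from differentiating $H(\xi_j,\xi_j)$ in both slots that one absorbs), so $|\nabla\log\rho_j(\xi_j)|_g\lesssim|\nabla_{\xi_j}\varphi_m(\xi)|_g+O(r_0)$; this gives the $\delta|\nabla\varphi_m(\xi)|_g$ term in \eqref{re}.

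Finally, outside $\cup_j B_{r_0}(\xi_j)$, where $\chi_{\xi_j}\equiv 0$ for the piece of the weight that matters and $W\approx 8\pi\sum_j G(x,\xi_j)+$ const, one checks $R$ is smooth and $O(\delta^2|\log\delta|)$ pointwise while the weight is bounded below by $c\,\delta^\sigma$, so this region contributes $O(\delta^{2-\sigma}|\log\delta|)$ to $\|R\|_*$; in the transition annuli $A_{r_0}(\xi_j)$ one uses the second, locally-uniform form of Lemma \ref{ewfxi} giving $W_j=8\pi G(x,\xi_j)-2\delta^2\chi_{\xi_j}/|y_{\xi_j}|^2+\dots$, again $O(\delta^2|\log\delta|)$. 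Collecting the three worst contributions $(a),(b),(c)$ and the far-field estimate yields \eqref{re}.

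I expect the main obstacle to be the careful bookkeeping in step (b)–(c): one must expand $ke^W$ to second order in $y_{\xi_i}(x)$ inside each concentration ball, keep track of how the $O(\delta^2|\log\delta|)$ errors in Lemma \ref{ewfxi} interact with the exponential (they are multiplied by $e^{U_i}$ which is $O(\delta^{-2})$ near $\xi_i$, so one genuinely needs the $|\log\delta|$-gain and the fact that $e^{O(\delta^2|\log\delta|)}=1+O(\delta^2|\log\delta|)$ uniformly), and verify that after dividing by the weight — which near $\xi_i$ behaves like $\delta_i^\sigma/(\delta_i^2+|y_{\xi_i}|^2)^{1+\sigma/2}$, i.e. like $\delta^{-2-\sigma}$ at the center — the linear-in-$y$ term really only costs one power of $\delta$ (because $\int|z|\,e^{u_0(|z|)}$-type rescaled integrals against the rescaled weight are $O(\delta)$, the odd symmetry helping), not zero powers. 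The other delicate point is the precise passage from $\nabla\log\rho_j(\xi_j)$ to $\nabla_{\xi_j}\varphi_m(\xi)$, which requires noting that $\nabla_x H(x,\xi)|_{x=\xi}=\frac12\nabla[H(\xi,\xi)]$ (by symmetry of $H$) so that the full gradient $\nabla_{\xi_j}\varphi_m$ matches, up to the precise constant $4\pi$, the quantity $\nabla\log\rho_j(\xi_j)$ evaluated at the concentration point.
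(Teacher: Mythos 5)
Your strategy is the same as the paper's (expand $W_j$ via Lemma \ref{ewfxi} in each ball, compute $\int_S ke^Wdv_g$, cancel the leading $e^{U_j}$ against $-\Delta_g W=\sum_j\chi_je^{-\varphi_j}e^{U_j}+O(\de^2)$, measure the linear-in-$y_{\xi_j}$ leftover and the $(\la-8\pi m)$-piece in $\|\cdot\|_*$), but there is one step that, as written, fails: your value of the normalizing integral. With \eqref{repla0} one has $\de_j^{-2}\rho_j(\xi_j)=\de^{-2}$ for \emph{every} $j$, so each ball contributes $\pi/\de^2$ and $\int_S ke^Wdv_g=\frac{\pi m}{\de^2}+O(|\log\de|)$ (this is \eqref{ikeW}); it is \emph{not} $\frac{\pi}{\de^2}\sum_j\rho_j(\xi_j)$, which is what your ``actually I should track this as'' correction asserts. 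This matters because the cancellation you invoke hinges on it: with your value, the displayed quotient becomes $\frac{m}{\sum_j\rho_j(\xi_j)}\,\frac{\rho_i(x)}{\rho_i(\xi_i)}\,e^{U_i}$ near $\xi_i$, which does not cancel $\chi_ie^{-\varphi_i}e^{U_i}$ at leading order unless $\sum_j\rho_j(\xi_j)=m$. The whole point of the choice $\de_j^2=\de^2\rho_j(\xi_j)$ is precisely that the $\rho_j(\xi_j)$ factors drop out of the normalization, so that $\frac{8\pi m\,ke^W}{\int_S ke^Wdv_g}=\sum_j\chi_j\big[1+\langle\nabla\log(\rho_j\circ y_{\xi_j}^{-1})(0),y_{\xi_j}(x)\rangle+O(|y_{\xi_j}(x)|^2+\de^2|\log\de|)\big]e^{U_j}+O(\de^2)$ as in \eqref{important}; once you fix the integral, your bookkeeping (a)--(c) goes through exactly as in the paper.

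Two smaller points. First, the passage from $\nabla\log\rho_j(\xi_j)$ to $\nabla_{\xi_j}\varphi_m(\xi)$ must be the \emph{exact} identity $4\pi\,\nabla_{\xi_j}\varphi_m(\xi)=\nabla\log(\rho_j\circ y_{\xi_j}^{-1})(0)$ (which follows from the symmetry $\nabla_xH(x,\xi)\big|_{x=\xi}=\tfrac12\nabla_\xi[H(\xi,\xi)]$ and $G(\xi_l,\xi_j)=G(\xi_j,\xi_l)$, as you note in your last paragraph); your intermediate hedge $|\nabla\log\rho_j(\xi_j)|_g\lesssim|\nabla_{\xi_j}\varphi_m(\xi)|_g+O(r_0)$ would destroy the statement, since an $O(r_0)$ error turns the bound into $O(\de)$ instead of $\de|\nabla\varphi_m(\xi)|_g$, and the latter smallness near critical points is exactly what the lemma is for. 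Second, $\|\cdot\|_*$ is a weighted sup norm, so the one-power-of-$\de$ gain for the linear term is a pointwise computation, $\sup_y \frac{e^{U_j}|y_{\xi_j}(x)|}{\hbox{weight}}=O\big(\sup_z\frac{\de_j|z|}{(1+|z|^2)^{1-\sigma/2}}\big)=O(\de)$; no odd-symmetry or integral cancellation is involved (and none is needed).
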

\begin{proof}[\dem]
First, from  Lemma \ref{ewfxi} we note that for any
$j\in\{1,\dots,m\}$
$$W_j(x)=U_j(x) - \log (8\delta_j^2) + 8\pi H(x,\xi_j)+O(\de^2 |\log \de|)$$
uniformly for $x\in B_{r_0}(\xi_j)$ and
$$W_j(x)=8\pi G(x,\xi_j) +O(\de^2 |\log \de|)$$
uniformly for $x$ on compact subsets of $S \sm\{\xi_j\}$. Since by symmetry and $\hat \varphi_{\xi_j}(0)=0$ we have
\begin{eqnarray*}
\int_{B_{r_0}(\xi_j)} \rho_j(x) e^{U_j} dv_g&=& \int_{B_{r_0}(0)}  \rho_j(y_{\xi_j}^{-1}(y)) {8 \delta_j^2\over (\delta_j^2 +
|y|^2 )^2}  e^{\hat \varphi_{\xi_j}(y)}dy\\
&=&\int_{B_{{r_0\over\delta_j}}(0)} \rho_j(\xi_j) {8 \over ( 1 + |y|^2)^2}\,(1+O(\delta_j^2 |y|^2))dy=8 \pi \rho_j(\xi_j) + O(\de^2|\log\de|),
\end{eqnarray*}
we then get that
\begin{eqnarray}\label{ikeW}
\int_S ke^W dv_g &=&\sum_{j=1}^m\frac{1}{8\delta_j^2}\int_{B_{r_0}(\xi_j)} k e^{U_j + 8\pi H(x,\xi_j)+8\pi \sum_{l\ne j}G(x,\xi_l)+O(\de^2|\log \de |)}dv_g + O(1) \nonumber\\
&=&\sum_{j=1}^m\frac{1}{8\delta_j^2}\int_{B_{r_0}(\xi_j)} \rho_j(x) e^{U_j} (1 + O(\de^2|\log \de |))dv_g + O(1) \nonumber \\
&=& \sum_{j=1}^m {1\over \delta_j^2}[\pi \rho_j(\xi_j) + O(\de^2|\log\de|)] + O(1)= {\pi m\over \de^2} + O(|\log\de|).
\end{eqnarray}
By Lemma \ref{ewfxi} and \equ{repla0}, (\ref{ikeW}) we have that
\begin{itemize}
\item in $S \setminus \cup_{j=1}^m B_{r_0}(\xi_j)$ there holds $8\pi m \frac{ k e^W}{\int_S ke^W dv_g}=O(\de^2)$  in view of $W(x)=O(1)$;
\item in $B_{r_0}(\xi_j)$, $j\in\{1,\dots,m\}$, there holds
\begin{eqnarray*}
8\pi m \frac{ k e^W}{\int_S ke^W dv_g}&=& 8\pi m \frac{k
e^{-\log(8\delta_j^2)+8\pi H(x,\xi_j) + 8\pi\sum_{l\ne
j}G(x,\xi_l)+O(\de^2|\log \de|)}}
{\pi m \de^{-2} + O(|\log\de|)} e^{U_j}\\
&=&\frac{8\pi m
\rho_j(x)+O(\de^2|\log \de|)}
{8\pi m\rho_j(\xi_j) + O(\de^2|\log\de|)}e^{U_j}\\
&=& \bigg[1+\Big\langle\frac{\nabla (\rho_j \circ
y_{\xi_j}^{-1})(0)}{\rho_j(\xi_j)},y_{\xi_j}(x)\Big\rangle+O(|y_{\xi_j}(x)|^2+\de^2
|\log \de|)\bigg]  e^{U_j},
\end{eqnarray*}
\end{itemize}
which can be summarized as follows:
\begin{eqnarray} \label{important}
\frac{8\pi m  k e^W}{\int_S ke^W dv_g}&=&\sum_{j=1}^m \chi_j
\bigg[1+\Big\langle\frac{\nabla (\rho_j \circ
y_{\xi_j}^{-1})(0)}{\rho_j(\xi_j)},y_{\xi_j}(x)\Big\rangle
+O(|y_{\xi_j}(x)|^2+\de^2 |\log \de|)\bigg] e^{U_j}\\
&&+O(\de^2) \chi_{S \setminus \cup_{j=1}^m B_{r_0}(\xi_j)},\nonumber
\end{eqnarray}
where $\chi_j=\chi_{\xi_j}$. Since as before
$$\int_S \chi_j e^{-\varphi_j} e^{U_j} dv_g=\int_{B_{r_0}(0)} {8 \delta_j^2\over (\delta_j^2 +
|y|^2 )^2}  dy+O(\delta^2)=8\pi + O(\de^2)$$
with $\varphi_j=\varphi_{\xi_j}$, for
$$R_{8\pi m}=\Delta_g W+8\pi m \lf({ke^{W}\over\int_S ke^{W}dv_g} - {1\over
|S|}\rg)$$
we then have that
\begin{eqnarray*}
R_{8\pi m}& &=-\sum_{j=1}^m \chi_j e^{-\varphi_j}e^{U_j} +8\pi m {ke^{W}\over\int_S ke^{W}dv_g} + {1\over
|S|}\sum_{j=1}^m \int_S \chi_j e^{-\varphi_j } e^{U_j}dv_g - {8\pi
m \over|S|}\\
&=&-\sum_{j=1}^m \chi_j e^{-\varphi_j}e^{U_j} +8\pi m {ke^{W}\over\int_S ke^{W}dv_g}+O(\de^2).
\end{eqnarray*}
By (\ref{important}) we now deduce that $R_{8\pi
m}(x)=O(\de^2)$ in $S \setminus \cup_{j=1}^m B_{r_0}  (\xi_j)$ and
\begin{eqnarray*}
R_{8\pi m}&=&  \lf[-e^{-\varphi_j}+1+O( |\nabla\log (\rho_j \circ y_{\xi_j}^{-1})(0)||y_{\xi_j}(x)|+\de^2 |\log \de|)\rg] e^{U_j}+O(\de^2)\\
&=& e^{U_j}O\lf(|\nabla \log(\rho_j \circ y_{\xi_j}^{-1})(0)|
|y_{\xi_j}(x)|+|y_{\xi_j}(x)|^2+\de^2|\log \de|\rg) +O(\de^2)
\end{eqnarray*}
in $B_{r_0}  (\xi_j)$, $j\in\{1,\dots,m\}$, in view of $\varphi_j(\xi_j)=0$ and $\nabla \varphi_j(\xi_j)=0$. From the definition of $\|\cdot \|_*$ we deduce the validity of
\begin{equation}\label{R8pim}
\|R_{8\pi m}\|_*\le  C\left(\delta |\nabla
\varphi_m(\xi)|_g+\de^{2-\sigma}\right)
\end{equation}
in view of $|\nabla \log(\rho_j \circ y_{\xi_j}^{-1})(0)|\leq |\nabla_{\xi_j} \varphi_m(\xi)|_g$. Since by (\ref{important})
$$R-R_{8\pi m}= (\lambda-8\pi m) \lf({ke^{W}\over\int_S ke^{W}dv_g} - {1\over |S|}\rg)=O\bigg(|\lambda-8\pi m| \sum_{j=1}^m
\chi_j e^{U_j}+|\lambda-8\pi m|\bigg),$$ we get that
$\|R-R_{8\pi m}\|_*= O(\delta^{-\sigma} |\lambda-8\pi
m|)$. In conclusion, by \equ{repla1} and \equ{R8pim} we deduce the
validity of \eqref{re}. \end{proof}


\section{The reduced energy}
\noindent The purpose of this section is to give an asymptotic
expansion of the ``reduced energy" $J_\la(W)$, where $J_\la$ is the energy functional given by
\eqref{energy}. For technical reasons, we will be concerned with establishing it in a $C^2$-sense in $\de$ and
just in a $C^1$-sense in $\xi$. To this aim, the following result will be very useful:

\begin{lem}\label{ieuf}
Letting $f\in C^{2,\gamma}(S)$ (possibly depending in $\xi$), $0<\gamma<1$, denote as $P_2(f)$ the second-order Taylor expansion of $f(x)$ at $\xi$:
$$P_2 f(x)=f(\xi)+\langle\grad (f \circ y_\xi^{-1}) (0), y_\xi(x)\rangle+{1\over2}\langle
D^2 (f\circ y_\xi^{-1})(0)y_\xi(x), y_\xi(x) \rangle.$$
The following expansions do hold as $\delta \to 0$:
\begin{eqnarray*}
\int_S \chi_\xi e^{-\varphi_\xi} f(x) e^{U_{\de,\xi}} dv_g&=& 8\pi f(\xi)-2 \de^2 \Delta_g f (\xi) \left[ 2\pi \log \delta+ \int_{\mathbb{R}^2} {\chi'(|y|) \log |y|\over |y| } dy +\pi \right]\\
&&+8\de^2\int_S \chi_\xi e^{-\varphi_\xi} {f(x)-P_2(f)(x)\over |y_{\xi}(x)|^4}\,dv_g+
4\de^2 f(\xi) \int_{\mathbb{R}^2} {\chi'(|y|) \over
|y|^3 } dy+o(\delta^2),
\end{eqnarray*}
\begin{eqnarray*}
\int_S \chi_\xi e^{-\varphi_\xi} f(x) e^{U_{\de,\xi}}\frac{dv_g}{\delta^2+|y_\xi(x)|^2} =
\frac{4\pi}{\delta^2}f(\xi)+\pi \Delta_g f(\xi)+O(\delta^{\gamma})
\end{eqnarray*}
and
\begin{eqnarray*}
\int_S \chi_\xi e^{-\varphi_\xi} f(x) e^{U_{\de,\xi}}\frac{a \delta^2-|y_\xi(x)|^2}{(\delta^2+|y_\xi(x)|^2)^2} dv_g
=\frac{4 \pi}{3 \de^2}(2a-1) f(\xi)+(a-2)\frac{\pi}{3} \Delta_g f(\xi) +O(\delta^\gamma)
\end{eqnarray*}
for $a \in \mathbb{R}$.
\end{lem}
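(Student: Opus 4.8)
The plan is to pull every integral back to the isothermal chart $y=y_\xi(x)$ centered at $\xi$. Writing $\tilde f=f\circ y_\xi^{-1}$, and using that $dv_g=e^{\hat\varphi_\xi(y)}\,dy$, $e^{-\varphi_\xi(x)}=e^{-\hat\varphi_\xi(y)}$, $\chi_\xi(x)=\chi(|y|)$ and $e^{U_{\de,\xi}(x)}=\dfrac{8\de^2}{(\de^2+|y|^2)^2}$, the factors $e^{-\varphi_\xi}$ and $e^{\hat\varphi_\xi}$ cancel and all three left-hand sides become Euclidean integrals of the form $\int_{B_{2r_0}(0)}\chi(|y|)\tilde f(y)\,K_\de(y)\,dy$, where $K_\de$ is one of the radial kernels $\dfrac{8\de^2}{(\de^2+|y|^2)^2}$, $\dfrac{8\de^2}{(\de^2+|y|^2)^3}$, $\dfrac{8\de^2(a\de^2-|y|^2)}{(\de^2+|y|^2)^4}$ (for the first/second/third formula, after multiplying the second and third kernels out). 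On the right-hand side the only facts needed are that $\hat\varphi_\xi(0)=0$, hence by the conformal law $\Delta_g f(\xi)=\Delta\tilde f(0)$, and that $P_2(f)(x)$ reads in the chart as the honest Taylor polynomial $P_2\tilde f(y)=\tilde f(0)+\langle\nabla\tilde f(0),y\rangle+\tfrac12\langle D^2\tilde f(0)y,y\rangle$.

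Next I would Taylor expand $\tilde f(y)=P_2\tilde f(y)+r_\xi(y)$ with $|r_\xi(y)|\le C|y|^{2+\gamma}$ (this is where $f\in C^{2,\gamma}$ enters), and split each integral according to the constant, linear and quadratic parts of $P_2\tilde f$, plus the remainder $r_\xi$. The linear piece vanishes identically for every one of the kernels, since $K_\de$ is radial. For the quadratic piece, radial symmetry gives $\int\chi(|y|)y_iy_j K_\de=0$ for $i\ne j$ and reduces everything to $\tfrac14\Delta\tilde f(0)\int\chi(|y|)|y|^2 K_\de(y)\,dy$; the constant piece is $\tilde f(0)\int\chi(|y|)K_\de(y)\,dy$. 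All the remaining work is to evaluate the two ``moments'' $\int\chi(|y|)K_\de$ and $\int\chi(|y|)|y|^2K_\de$, together with the remainder integral.

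The moments are computed by first extending to $\mathbb{R}^2$ (an elementary one-variable computation after setting $s=|y|^2$: one gets $\int_{\mathbb{R}^2}K_\de=8\pi,\ \tfrac{4\pi}{\de^2},\ \tfrac{4\pi(2a-1)}{3\de^2}$ for the three kernels, and $\int_{\mathbb{R}^2}|y|^2K_\de$ equals the divergent-in-the-first-case expression, $4\pi$, and $\tfrac{4\pi(a-2)}{3}$ respectively) and then subtracting the ``tail'' $\int(1-\chi(|y|))K_\de$, supported in $\{|y|\ge r_0\}$. For the second and third formulas the kernels decay like $\de^2|y|^{-6}$ off the support of $\chi$ near the origin, so the tails are $O(\de^2)$, absorbed in the claimed $O(\de^\gamma)$ (recall $\gamma<1$); for the quadratic moment in the first formula the relevant integral $\int\chi(|y|)|y|^2\dfrac{8\de^2}{(\de^2+|y|^2)^2}dy$ has a genuine $\log\de$ contribution plus $\chi$-dependent constants, which I would isolate by writing $\dfrac{|y|^2}{(\de^2+|y|^2)^2}=\dfrac{1}{\de^2+|y|^2}-\dfrac{\de^2}{(\de^2+|y|^2)^2}$, computing $\int\chi(|y|)\dfrac{dy}{\de^2+|y|^2}=-2\pi\log\de+2\pi\log(2r_0)-2\pi\int_{r_0}^{2r_0}\dfrac{1-\chi(r)}{r}dr+O(\de^2)$, and then using the integration-by-parts identities $\int_{\mathbb{R}^2}\dfrac{\chi'(|y|)\log|y|}{|y|}dy=-2\pi\log(2r_0)+2\pi\int_{r_0}^{2r_0}\dfrac{1-\chi(r)}{r}dr$ and $2\pi\int_{r_0}^{\infty}\dfrac{1-\chi(r)}{r^3}dr=-\tfrac12\int_{\mathbb{R}^2}\dfrac{\chi'(|y|)}{|y|^3}dy$ (both obtained by parts from $\chi(r_0)=1$, $\chi(2r_0)=0$) to funnel all the $r_0$-dependence into the $\chi$-integrals appearing in the statement. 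The second identity is exactly what produces the extra term $4\de^2f(\xi)\int_{\mathbb{R}^2}\dfrac{\chi'(|y|)}{|y|^3}dy$, coming from the tail of the constant moment $\int\chi(|y|)\dfrac{8\de^2}{(\de^2+|y|^2)^2}dy$.

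Finally, the remainder. For the second and third formulas, using $|r_\xi(y)|\le C|y|^{2+\gamma}$ and $|K_\de(y)|\le C\de^2(\de^2+|y|^2)^{-3}$, the scaling $y=\de z$ gives $\int\chi(|y|)|r_\xi(y)|\,|K_\de(y)|\,dy\le C\de^{2+\gamma}\int_{\mathbb{R}^2}\dfrac{|z|^{2+\gamma}}{(1+|z|^2)^3}dz=O(\de^{2+\gamma})$, again $O(\de^\gamma)$. For the first formula the remainder must be kept to order $\de^2$: I would show $\int\chi(|y|)r_\xi(y)\dfrac{8\de^2}{(\de^2+|y|^2)^2}dy=8\de^2\int\chi(|y|)\dfrac{r_\xi(y)}{|y|^4}dy+o(\de^2)$ — the right integral converges since $r_\xi(y)/|y|^4=O(|y|^{\gamma-2})$ near $0$ — by estimating the difference via $\Big|\dfrac{1}{(\de^2+|y|^2)^2}-\dfrac{1}{|y|^4}\Big|\le \dfrac{3\de^2}{|y|^6}$ on $\{|y|>\de\}$ and treating $\{|y|\le\de\}$ directly, both regions contributing $O(\de^{2+\gamma})$. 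Collecting the constant, quadratic and remainder contributions and using $\Delta_g f(\xi)=\Delta\tilde f(0)$ yields the three stated expansions. The delicate step, and the one I would be most careful with, is the first formula: extracting the precise $\de^2$-order terms needs exact (not merely asymptotic) evaluation of several elementary integrals plus the two integration-by-parts identities that make $r_0$ disappear into $\int\chi'(|y|)\log|y|/|y|\,dy$ and $\int\chi'(|y|)/|y|^3\,dy$; everything in the second and third formulas is comparatively soft, since an $O(\de^2)$ error is allowed.
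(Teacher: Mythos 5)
Your proposal is correct and follows essentially the same route as the paper's proof: pull back to the isothermal chart (where $e^{-\varphi_\xi}\,dv_g=dy$), Taylor-expand $f$ with the $C^{2,\gamma}$ remainder, kill the linear and off-diagonal quadratic terms by radial symmetry, evaluate the constant and quadratic moments of the three kernels, and use the two integration-by-parts identities to absorb all $r_0$-dependence into $\int\chi'(|y|)\log|y|/|y|\,dy$ and $\int\chi'(|y|)/|y|^3\,dy$ (the paper merely organizes this as a split of $S$ into $B_{r_0}(\xi)$ and the annulus, approximating $e^{U_{\delta,\xi}}$ by $8\delta^2/|y_\xi|^4$ outside, which is equivalent to your tail subtraction). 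The only slip is the scaling bound for the remainder in the second and third formulas, where $|y|^{2+\gamma}K_\delta(y)\,dy$ scales as $\delta^{2+\gamma}\cdot\delta^{-4}\cdot\delta^{2}=\delta^{\gamma}$, so the remainder is $O(\delta^{\gamma})$ rather than your stated $O(\delta^{2+\gamma})$; this is harmless since the statement only requires $O(\delta^{\gamma})$.
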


\begin{proof}[\dem]
Since $dv_g=e^{\hat \varphi_\xi(y)}dy$, by symmetry observe that
\begin{eqnarray*}
&& \int_{S \sm B_{r_0}(\xi)} \chi_\xi e^{-\varphi_\xi}  f(x) e^{U_{\de,\xi}} dv_g= 8\de^2\int_{S \sm
B_{r_0}(\xi)} {\chi_\xi e^{-\varphi_\xi}  f(x)\over |y_{\xi}(x)|^4}\,dv_g+O(\de^4)\\
&&=
8\de^2\int_{S \sm
B_{r_0}(\xi)} \chi_\xi e^{-\varphi_\xi} {f(x)-P_2(f)(x)\over |y_{\xi}(x)|^4}\,dv_g+
8\de^2 f(\xi) \int_{B_{2r_0}(0) \sm
B_{r_0}(0)}  {\chi(|y|) \over |y|^4}\,dy\\
&&+2 \de^2 \Delta(f\circ y_\xi^{-1})(0) \int_{B_{2r_0}(0) \sm
B_{r_0}(0)} {\chi(|y|) \over |y|^2}dy+O(\de^4)
\end{eqnarray*}
as $\delta \to 0$. On $B_{r_0}(\xi)$ we get that
\begin{eqnarray*}
&&\int_{B_{r_0}(\xi)}\chi_\xi e^{-\varphi_\xi}  f(x) e^{U_{\de,\xi}} dv_g =\int_{B_{r_0}(0)}f(y_\xi^{-1}(y)) \frac{8\delta^2}{(\delta^2+|y|^2)^2}dy\\
&&=\int_{B_{r_0}(0)} P_2(f)(y_\xi^{-1}(y))
\frac{8\delta^2}{(\delta^2+|y|^2)^2} dy+\int_{B_{r_0}(0)} \lf((f \circ y_\xi^{-1})(y) -P_2(f)(y_\xi^{-1}(y))\right)
\frac{8\delta^2}{(\delta^2+|y|^2)^2} dy.\end{eqnarray*}
Since $f (x)-P_2(f)(x)=O(|y_\xi(x)|^{2+\gamma})$, by symmetry and the Lebesgue Theorem we get that
\begin{eqnarray*}
&&\int_{B_{r_0}(\xi)}\chi_\xi e^{-\varphi_\xi}f(x) e^{U_{\de,\xi}} dv_g =f(\xi) \int_{B_{r_0/\delta}(0)} \frac{8}{(1+|y|^2)^2} dy\\
&&+\delta^2 \Delta (f \circ y_\xi^{-1} )(0)
\int_{B_{r_0/\delta}(0)} \frac{2 |y|^2}{(1+|y|^2)^2} dy+8 \delta^2 \int_{B_{r_0}(\xi)} e^{-\varphi_\xi} \frac{f(x)  - P_2(f)(x)}{|y_\xi(x)|^4} dv_g+o(\delta^2)\\
&&=8 \pi f(\xi) \Big(1- \frac{\delta^2}{\delta^2+r_0^2}\Big)+2\pi
\delta^2 \Delta (f \circ y_\xi^{-1} )(0)
\left(\log{\delta^2+r_0^2  \over \delta^2}+{\delta^2 \over \delta^2+r_0^2}-1\right)\\
&&+8 \delta^2 \int_{B_{r_0}(\xi)} e^{-\varphi_\xi} \frac{f(x)  - P_2(f)(x)}{|y_\xi(x)|^4} dv_g+o(\delta^2)
=8 \pi f(\xi) \Big(1- \frac{\delta^2}{r_0^2}\Big)\\
&&+2\pi \delta^2 (-2\log \delta+2 \log r_0-1) \Delta (f \circ y_\xi^{-1} )(0)
+8 \delta^2 \int_{B_{r_0}(\xi)} e^{-\varphi_\xi} \frac{f(x)  - P_2(f)(x)}{|y_\xi(x)|^4} dv_g+o(\delta^2).\end{eqnarray*}
In view of (\ref{chiovery2}) and
\begin{equation} \label{chiovery2bis}
\int_{B_{2r}(0)\setminus B_{r}(0)}  {\chi(|y|) \over
|y|^4 }  dy=2\pi \int_{r}^{2r}  {\chi(t) \over
t^3 }  dt= \frac{\pi}{r^2}+ \frac{1}{2} \int_{B_{2r}(0)\setminus B_r(0)} {\chi'(|y|) \over
|y|^3 } dy\end{equation}
for $r \leq r_0$, summing up the two previous expansions we get that
\begin{eqnarray*}
\int_S \chi_\xi e^{-\varphi_\xi}  f(x) e^{U_{\de,\xi}} dv_g&=&8 \pi f(\xi) -2 \de^2 \Delta(f\circ y_\xi^{-1})(0)\left[ 2\pi \log \delta+ \int_{\mathbb{R}^2} {\chi'(|y|) \log |y|\over
|y| } dy +\pi \right]\\
&&+8\de^2\int_S \chi_\xi e^{-\varphi_\xi} {f(x)-P_2(f)(x)\over |y_{\xi}(x)|^4}\,dv_g+
4\de^2 f(\xi) \int_{\mathbb{R}^2} {\chi'(|y|) \over
|y|^3 } dy+o(\delta^2).
\end{eqnarray*}
Since by (\ref{laplacian}) $\Delta_g f(x)=e^{-\varphi_\xi(x)} \Delta (f \circ y_\xi^{-1}) (y_\xi(x))$, we get that
$\Delta (f \circ y_\xi^{-1}) (0)=\Delta_g f(\xi)$, and the validity of the first expansion then follows.
The other two expansions are simpler because of the stronger decay. Indeed, by the Taylor expansion of $f$ at $\xi$ and the symmetries we get that
\begin{eqnarray*}
&&\int_S \chi_\xi e^{-\varphi_\xi} f(x) e^{U_{\de,\xi}}\frac{dv_g}{\delta^2+|y_\xi(x)|^2} =
\frac{8}{\de^2} \int_{B_{r_0/\de}(0)} (f \circ y_\xi^{-1})(\de y) \frac{dy}{(1+|y|^2)^3}
+O(\delta^2)\\
&&=\frac{8}{\de^2} \left[f(\xi) \int_{\mathbb{R}^2} \frac{dy}{(1+|y|^2)^3}+\frac{\delta^2}{4} \Delta_g f(\xi) \int_{\mathbb{R}^2} \frac{|y|^2}{(1+|y|^2)^3} dy+O\Big(\de^{2+\gamma} \int_{\mathbb{R}^2} \frac{|y|^{2+\gamma}}{(1+|y|^2)^3} dy\Big)\right]+O(\delta^2)\\
&&=\frac{4\pi}{\delta^2}f(\xi)+\pi \Delta_g f(\xi)+O(\delta^{\gamma})
\end{eqnarray*}
in view of $\int_{\mathbb{R}^2} \frac{dy}{(1+|y|^2)^3}=\frac{\pi}{2}$ and
$$\int_{\mathbb{R}^2} \frac{|y|^2}{(1+|y|^2)^3} dy=
\int_{\mathbb{R}^2} \frac{dy}{(1+|y|^2)^2}-\int_{\mathbb{R}^2} \frac{dy}{(1+|y|^2)^3}=\frac{\pi}{2}.$$
Similarly, we have that
\begin{eqnarray*}
&&\int_S \chi_\xi e^{-\varphi_\xi} f(x) e^{U_{\de,\xi}}\frac{a \delta^2-|y_\xi(x)|^2}{(\delta^2+|y_\xi(x)|^2)^2} dv_g=
\frac{8}{\de^2} \int_{B_{r_0/\de}(0)} (f \circ y_\xi^{-1})(\de y) \frac{a-|y|^2}{(1+|y|^2)^4} dy+O(\delta^2)\\
&&=\frac{8}{\de^2} \left[f(\xi) \int_{\mathbb{R}^2} \frac{a-|y|^2}{(1+|y|^2)^4}dy+\frac{\delta^2}{4} \Delta_g f(\xi) \int_{\mathbb{R}^2} \frac{|y|^2(a-|y|^2)}{(1+|y|^2)^4} dy \right]+O(\delta^\gamma)\\
&&=\frac{4 \pi}{3 \de^2}(2a-1) f(\xi)+(a-2)\frac{\pi}{3} \Delta_g f(\xi) +O(\delta^\gamma)
\end{eqnarray*}
in view of
$$\int_{\mathbb{R}^2} \frac{a-|y|^2}{(1+|y|^2)^4}dy=(a+1) \int_{\mathbb{R}^2} \frac{dy}{(1+|y|^2)^4}-\int_{\mathbb{R}^2} \frac{dy}{(1+|y|^2)^3}=(2a-1)\frac{\pi}{6}$$
and
$$\int_{\mathbb{R}^2} \frac{|y|^2(a-|y|^2)}{(1+|y|^2)^4} dy=
- \int_{\mathbb{R}^2} \frac{dy}{(1+|y|^2)^2}
+(2+a) \int_{\mathbb{R}^2} \frac{dy}{(1+|y|^2)^3} -(1+a)\int_{\mathbb{R}^2} \frac{dy}{(1+|y|^2)^4}=(a-2)\frac{\pi}{6}.$$
The Lemma is completely established. \end{proof}

\medskip \noindent We are now ready to establish the expansion of $J_\la(W)$:
\begin{theo} \label{expansionenergy}
Assume \eqref{repla0}-\eqref{repla1}. The following expansion does
hold
\begin{equation} \label{JUt}
J_\lambda (W) =-8\pi m -\lambda \log (\pi m) -32\pi^2
\varphi_m(\xi)+ 2(\lambda -8\pi m)\log\delta
 +A(\xi) \delta^2 \log \delta-B(\xi)\delta^2+o(\de^2)
\end{equation}
in $C^2(\mathbb{R})$ and $C^1(\Xi)$ as $\de\to 0^+$, where
$\varphi_m(\xi)$, $A(\xi)$ and $B(\xi)$ are given by \eqref{fim},
\eqref{v} and \eqref{B}, respectively.
\end{theo}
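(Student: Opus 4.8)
The plan is to compute separately the two pieces of $J_\lambda(W) = \frac12\int_S |\nabla W|_g^2\,dv_g - \lambda \log\big(\int_S k e^{W}\,dv_g\big)$ and to track all terms up to $o(\delta^2)$, recalling that $\delta_j^2 = \delta^2 \rho_j(\xi_j)$ and $|\lambda - 8\pi m| \le C\delta^2|\log\delta|$. First I would handle the Dirichlet term. Writing $W = \sum_j W_j$ with $W_j = PU_j$, integration by parts against $-\Delta_g W_j$ (using the defining equation \eqref{ePu}) turns $\frac12\int_S |\nabla W|_g^2$ into $\frac12\sum_{i,j}\int_S \big(\chi_i e^{-\varphi_i} e^{U_i} - \frac{1}{|S|}\int_S \chi_i e^{-\varphi_i}e^{U_i}\big) W_j\,dv_g$. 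The diagonal terms $i=j$ are evaluated by inserting the expansion of $W_j$ from Lemma \ref{ewfxi} and applying Lemma \ref{ieuf} with $f$ equal to $H(\cdot,\xi_j)$ (plus the singular $U_j - \log(8\delta_j^2) = \chi_j\hat U_j + \dots$ part, handled by hand since it is explicit), while the off-diagonal terms $i\ne j$ use the locally-uniform expansion $W_j = 8\pi G(\cdot,\xi_j) + O(\delta^2|\log\delta|)$ and again Lemma \ref{ieuf} with $f = G(\cdot,\xi_j)$. Collecting, the leading constant and the $\varphi_m(\xi)$ term appear from $8\pi H(\xi_j,\xi_j) + 8\pi\sum_{l\ne j}G(\xi_l,\xi_j)$, while the $\delta^2\log\delta$ and $\delta^2$ contributions are assembled from the $\Delta_g$-terms and the convergent integrals produced by Lemma \ref{ieuf}.

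Next I would handle the nonlinear term $-\lambda \log\big(\int_S k e^W\,dv_g\big)$. Using the computation already performed in the proof of Lemma \ref{estrr0}, one has $\int_S k e^W\,dv_g = \sum_j \frac{1}{8\delta_j^2}\int_{B_{r_0}(\xi_j)} \rho_j(x) e^{U_j}(1 + O(\delta^2|\log\delta|))\,dv_g + O(1)$; here I would be more precise than in Lemma \ref{estrr0}, expanding $\rho_j(x) = \rho_j(\xi_j) + \tfrac12\langle D^2(\rho_j\circ y_{\xi_j}^{-1})(0)y,y\rangle + \dots$ (the gradient term vanishes by symmetry) and keeping the exponential correction $e^{8\pi H(x,\xi_j) + 8\pi\sum_{l\ne j}G(x,\xi_l) - [8\pi H(\xi_j,\xi_j) + 8\pi\sum_{l\ne j}G(\xi_j,\xi_l)]}$ to second order. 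Applying Lemma \ref{ieuf} (first expansion, $f = \rho_j$) together with the $\alpha_{\delta,\xi}$-type remainders from Lemma \ref{ewfxi} gives $\int_S k e^W\,dv_g = \frac{\pi m}{\delta^2}\big(1 + c_1(\xi)\delta^2\log\delta + c_2(\xi)\delta^2 + o(\delta^2)\big)$, whence $-\lambda\log\big(\int_S k e^W\big) = -\lambda\log(\pi m) + 2\lambda\log\delta - \lambda c_1(\xi)\delta^2\log\delta - \lambda c_2(\xi)\delta^2 + o(\delta^2)$. Writing $\lambda = 8\pi m + (\lambda - 8\pi m)$ and using \eqref{repla1}, the term $2(\lambda - 8\pi m)\log\delta$ is isolated, while the $\delta^2\log\delta$ and $\delta^2$ coefficients can replace $\lambda$ by $8\pi m$ up to $o(\delta^2)$.

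The final step is to identify the coefficients of $\delta^2\log\delta$ and $\delta^2$ coming out of the sum of the two pieces with $A(\xi)$ and $-B(\xi)$ as defined in \eqref{v} and \eqref{B}. For $A(\xi)$ this is a matter of matching the $\Delta_g\rho_j(\xi_j)$ and $-2K(\xi_j)\rho_j(\xi_j)$ terms — the curvature enters through $\Delta_g H(x,\xi_j)$ and through the relation \eqref{equationvarphi}/\eqref{laplacian} connecting $\Delta_g$, $\Delta$ and $\hat\varphi_\xi$, since $\Delta_g \log\rho_j$ carries a $\frac{8\pi m}{|S|}$ and the Gauss curvature via $-2K$. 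For $B(\xi)$ one recognizes the conditionally-convergent integral $\lim_{r\to 0}\big[8\int_{S\setminus\cup_j B_r(\xi_j)} k e^{8\pi\sum_j G(x,\xi_j)}\,dv_g - \frac{8\pi}{r^2}\sum_j \rho_j(\xi_j) - A(\xi)\log\frac1r\big]$ as precisely the regularized version of the integrals $\int_S \chi_\xi e^{-\varphi_\xi}\frac{f - P_2 f}{|y_\xi|^4}$ appearing in Lemma \ref{ieuf}, after unfolding the cutoffs, and the remaining explicit pieces $-2\pi\sum_j[\Delta_g\rho_j(\xi_j) - 2K(\xi_j)\rho_j(\xi_j)]\log\rho_j(\xi_j)$ and $-\frac{A(\xi)}{2}$ arise from the substitution $\delta_j^2 = \delta^2\rho_j(\xi_j)$ (which replaces $\log\delta_j$ by $\log\delta + \frac12\log\rho_j(\xi_j)$) and from the $\int \chi'(|y|)\log|y|/|y|\,dy$ and $\int\chi'(|y|)/|y|^3\,dy$ boundary terms, which must cancel between the Dirichlet and nonlinear contributions — their cancellation is a good consistency check. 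I expect the main obstacle to be bookkeeping: ensuring that all the $\chi$-dependent boundary integrals cancel so that $B(\xi)$ is genuinely cutoff-independent, correctly combining the singular self-interaction of each $W_j$ with the diagonal $H(\xi_j,\xi_j)$ term, and, for the $C^1(\Xi)$ statement, checking that one may differentiate the expansions in $\xi$ while retaining the $o(\delta^2)$ control — this last point requires that all remainder estimates in Lemmas \ref{ewfxi} and \ref{ieuf} hold with one $\xi$-derivative, which is why $f\in C^{2,\gamma}$ is assumed and why the expansion is only claimed in $C^1(\Xi)$.
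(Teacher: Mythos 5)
Your plan for the $C^{0}$ part of \eqref{JUt} follows the paper's own route: integrate by parts in the Dirichlet term using \eqref{ePu} and the identity $\int_S \chi_j e^{-\varphi_j}e^{U_j}G(x,\xi_l)\,dv_g=PU_j(\xi_l)$, expand $W_j$ via Lemma \ref{ewfxi}, use Lemma \ref{ieuf} for the potential term, and check that the cutoff-dependent boundary integrals cancel so that the $\delta^2$-coefficient is the cutoff-free quantity $B(\xi)$ of \eqref{B}. That portion is fine.

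The genuine gap is that the theorem is claimed in $C^2(\mathbb{R})$ in $\delta$ and $C^1(\Xi)$ in $\xi$, and your proposal does not address the $\delta$-derivatives at all, while it handles the $\xi$-derivatives by ``differentiating the expansion'' with remainder control in Lemmata \ref{ewfxi} and \ref{ieuf}. This is not enough: an asymptotic expansion with a $o(\delta^2)$ (or $O(\delta^4|\log\delta|)$) remainder estimated only uniformly cannot be differentiated termwise — $\partial_\delta$ of an $o(\delta^2)$ error need not be $o(\delta)$, and the same for $\partial_\xi$. The paper instead \emph{recomputes} each derivative from scratch: it writes $\fr_{(\xi_j)_i}[J_\la(W)]=-\int_S\big[\lab W+\la ke^{W}/\int_S ke^W dv_g\big]\fr_{(\xi_j)_i}W\,dv_g$ and uses an expansion of $\fr_{(\xi_j)_i}W_q$ analogous to Lemma \ref{ewfxi} (formula \eqref{dxiw}) to get $-32\pi^2\fr_{(\xi_j)_i}\varphi_m(\xi)+O(\de^2|\log\de|)$; and for the $\delta$-derivatives it derives the expansions \eqref{ddw}--\eqref{dddw} of $\fr_\de W_l$ and $\fr_{\de\de}W_l$ (introducing the new constants $\beta_{\de_l,\xi_l}$, $\gamma_{\de_l,\xi_l}$) and computes $\fr_\de J_{8\pi m}(W)$ and $\fr_{\de\de}J_{8\pi m}(W)$ directly, where the singular contributions (of order $\de^{-1}$ and $\de^{-2}$, e.g. $-16\pi m/\de$ from $\int_S(-\lab W)\fr_\de W$ against the matching term from $8\pi m\,\frac{\int_S ke^W\fr_\de W}{\int_S ke^W}$) must cancel exactly, leaving $2A(\xi)\de\log\de+[A(\xi)-2B(\xi)]\de+o(\de)$ and $2A(\xi)\log\de+3A(\xi)-2B(\xi)+o(1)$ as in \eqref{derivdelta}, \eqref{derivdelta2}. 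These cancellations and the resulting coefficients are precisely what is used later (solving $\partial_\de E_\la=0$ and controlling the sign of $\partial_{\de\de}E_\la$), so the derivative expansions are not a routine afterthought of the $C^0$ computation but a separate, and in fact the larger, part of the proof; your proposal as written leaves this unproved.
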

\noindent The proof will be divided into several steps.
\begin{proof}[{\bf Proof (of (\ref{JUt}) in $C(\mathbb{R} \times \Xi)$):}]
First, let us consider the term
$$\int_S |\grad W|_g^2 dv_g= \int_S W (-\Delta_g W)dv_g= \sum_{j,l=1}^m \int_S \chi_j e^{-\varphi_j} e^{U_j}W_l dv_g$$
in view of $\int_S W dv_g=0$. Since by (\ref{green}) and (\ref{ePu})
\begin{eqnarray} \label{tricky}
\int_S \chi_j e^{-\varphi_j} e^{U_j} G(x,\xi_l) dv_g= \int_S (-\Delta_g PU_j) G(x,\xi_l) dv_g= PU_j(\xi_l)
\end{eqnarray}
for all $j,l=1,\dots,m$, by Lemmata \ref{ewfxi}, \ref{ieuf} and (\ref{tricky}) we have that for $l=j$
\begin{eqnarray*}
&& \int_S \chi_j e^{-\varphi_j}e^{U_j}W_j dv_g\\
&&= \int_S \chi_j e^{-\varphi_j} e^{U_j}\lf[\chi_j(U_j-\log(8\de_j^2))+8\pi H(x,\xi_j)+\alpha_{\de_j,\xi_j}-2\delta_j^2 F_{\xi_j}\rg]dv_g+O(\de^4 |\log \delta|)\\
&&= \int_S \chi_j e^{-\varphi_j} e^{U_j}\lf[\chi_j \log \frac{|y_{\xi_j}(x)|^4}{(\de_j^2+|y_{\xi_j}(x)|^2)^2}+8\pi G(x,\xi_j)+\alpha_{\de_j,\xi_j}-2\delta_j^2 F_{\xi_j}\rg]dv_g +O(\de^4 |\log \delta|)\\
&&=8 \int_{B_{2r_0/\de_j}(0)} \frac{\chi^2(\de_j |y|) }{(1+|y|^2)^2} \log \frac{|y|^4}{(1+|y|^2)^2} dy
+8\pi PU_j(\xi_j) +8\pi \alpha_{\de_j,\xi_j}-16\pi \delta_j^2 F_{\xi_j}(\xi_j)\\
&&+O(\de^4 |\log \delta|^2)=-16\pi -32 \pi \log \de_j +64\pi^2 H(\xi_j,\xi_j)+16\pi \alpha_{\de_j,\xi_j}-32\pi \delta_j^2 F_{\xi_j}(\xi_j)+O(\de^4 |\log \delta|^2)
\end{eqnarray*}
in view of
\begin{eqnarray*}
\int_{\mathbb{R}^2}{dy \over(1+|y|^2)^2}\log{|y|^4\over
(1+|y|^2)^2} =  2 \pi\int_0^\infty{ds \over(1+s)^2}\log {s \over 1+s} =- 2 \pi  \int_0^\infty {ds \over (1+s)^2}=- 2\pi
\end{eqnarray*}
by means of an integration by parts. Similarly, by Lemmata \ref{ewfxi}, \ref{ieuf}  and (\ref{tricky}) we have that for $l \not= j$
\begin{eqnarray*}
&&\int_S \chi_j e^{-\varphi_j}e^{U_j}W_l dv_g= \int_S \chi_j e^{-\varphi_j} e^{U_j}\lf[8\pi G(x,\xi_l)+\alpha_{\de_l,\xi_l}-2\delta_l^2 F_{\xi_l} \rg]dv_g+O(\de^4 |\log \delta|)\\
&&= 64\pi^2 G(\xi_l,\xi_j) +8\pi (\alpha_{\de_j,\xi_j}+\alpha_{\de_l,\xi_l})-16\pi (\delta_j^2 F_{\xi_j}(\xi_l)+ \delta_l^2 F_{\xi_l}(\xi_j))+O(\de^4 |\log \delta|^2).
\end{eqnarray*}
Setting
$$\alpha_{\de,\xi}=\sum_{j=1}^m \alpha_{\de_j,\xi_j}\,\qquad F_{\de,\xi}(x)=\sum_{j=1}^m \delta_j^2
F_{\xi_j}(x),$$ summing up the two previous expansions, for the gradient term we get that
\begin{eqnarray*}
{1\over 2}\int_S  |\grad W|_g^2 dv_g= -8 \pi m -16 \pi m \log \delta- 32 \pi^2 \varphi_m(\xi)+8 \pi m \alpha_{\de,\xi}-16 \pi \sum_{j=1}^m F_{\delta,\xi}(\xi_j) +o(\de^2)
\end{eqnarray*}
in view of (\ref{repla0}) and
$$8 \pi \sum_{j=1}^m \log \rho_j(\xi_j)-32 \pi^2\bigg[ \sum_{j=1}^m H(\xi_j,\xi_j)+
\sum_{l \not=j} G(\xi_l,\xi_j)\bigg]=32\pi^2 \varphi_m(\xi).$$

\medskip \noindent Let us now expand the potential term in $J_\lambda(W)$. By Lemma \ref{ewfxi} for any $j=1,\dots,m$ we find that
\begin{eqnarray*}
\int_{B_{r_0}(\xi_j)}k e^W dv_g &=&\int_{B_{r_0} (\xi_j)} \rho_j e^{U_j-\log(8\de_j^2)+\alpha_{\de,\xi}-2F_{\delta,\xi}
+O(\de^4|\log \delta|)} dv_g\\
&=&{1\over 8\de_j^2}\left[\int_S \chi_j e^{U_j} \rho_j  e^{\alpha_{\de,\xi}-2 F_{\delta,\xi}} dv_g
-8\delta_j^2 \int_{A_{2r_0}(\xi_j)} \frac{\chi_j \rho_j}{|y_{\xi_j}(x)|^4} dv_g+O(\delta^4 |\log \delta|)\right].
\end{eqnarray*}
By Lemma \ref{ieuf} (with $f(x)=e^{\varphi_j}\rho_j e^{\alpha_{\delta,\xi}-2F_{\delta,\xi}}$) we can now deduce that
\begin{eqnarray*}
&& 8 \delta_j^2 \int_{B_{r_0}(\xi_j)}k e^W dv_g = 8\pi \rho_j(\xi_j)  e^{\alpha_{\de,\xi}-2F_{\delta,\xi}(\xi_j)}
-4\pi   \left(\Delta_g  \rho_j (\xi_j)-2 K(\xi_j)\rho_j(\xi_j)\right) \delta_j^2 \log \delta_j \\
&&-2 \left(\Delta_g  \rho_j (\xi_j)-2 K(\xi_j)\rho_j(\xi_j)\right) \left( \int_{\mathbb{R}^2} {\chi'(|y|) \log |y|\over |y| } dy +\pi \right) \de_j^2
+4 \de_j^2 \rho_j(\xi_j) \int_{\mathbb{R}^2} {\chi'(|y|) \over
|y|^3 } dy\\
&&+8\de_j^2\int_{B_{r_0}(\xi_j)} \left[ ke^{8\pi \sum_{j=1}^m
G(x,\xi_j)} - e^{-\varphi_j}
\frac{ P_2(e^{\varphi_j}\rho_j)}{|y_{\xi_j}(x)|^4}\right]dv_g
-8\de_j^2\int_{A_{2r_0}(\xi_j)}  \chi_j e^{-\varphi_j}
\frac{ P_2(e^{\varphi_j}\rho_j)}{|y_{\xi_j}(x)|^4}\,dv_g
+o(\delta^2)
\end{eqnarray*}
in view of $\frac{\rho_j}{|y_{\xi_j}(x)|^4}=ke^{8\pi \sum_{j=1}^m
G(x,\xi_j)}$ in $B_{r_0}(\xi_j)$ and by (\ref{equationvarphi})
\begin{eqnarray} \label{gaussian}
\Delta_g  \left[e^{\varphi_j} \rho_j \right](\xi_j)=
\Delta_g  \rho_j (\xi_j)-2 K(\xi_j)\rho_j(\xi_j).
\end{eqnarray}
On the other hand, we have that
\begin{equation*}
\int_{S \sm \cup_{j=1}^m B_{r_0}(\xi_j)}
ke^W dv_g=\int_{S \sm \cup_{j=1}^m B_{r_0}(\xi_j)}
ke^{8\pi \sum_{j=1}^m G(x,\xi_j)}dv_g+O(\de^2|\log \delta|).
\end{equation*}
Since
$$\sum_{j=1}^m e^{-2 F_{\delta,\xi}(\xi_j)}=m-2 \sum_{j=1}^m
F_{\delta,\xi}(\xi_j)+O(\delta^4)$$
and by (\ref{repla0}) there holds
$$\delta_j^2 \log \delta_j=\rho_j(\xi_j) \delta^2 \log \delta+\frac{1}{2}\rho_j(\xi_j) \log \rho_j(\xi_j) \delta^2,$$
we then obtain that
\begin{eqnarray}
\frac{1}{\pi}e^{-\alpha_{\delta,\xi}}\delta^2 \int_{S}ke^W
dv_g=m
-   \frac{A(\xi)}{8\pi} \delta^2 \log \delta  +\frac{B_\chi(\xi)}{8\pi} \delta^2-2 \sum_{j=1}^m
F_{\delta,\xi}(\xi_j)+o(\delta^2),\label{intkeW}
\end{eqnarray}
where
\begin{eqnarray*}
B_\chi(\xi)&=&-2\pi \sum_{j=1}^m [\Delta_g  \rho_j(\xi_j) -2 K(\xi_j) \rho_j(\xi_j)] \log \rho_j(\xi_j)-\frac{A(\xi) }{2 \pi} \bigg( \int_{\mathbb{R}^2} {\chi'(|y|) \log |y|\over |y| } dy +\pi \bigg)\\
&& +4 \int_{\mathbb{R}^2} {\chi'(|y|) \over
|y|^3 } dy  \sum_{j=1}^m \rho_j(\xi_j) +8 \int_S
\left[ke^{8\pi \sum_{j=1}^m G(x,\xi_j)}-\sum_{j=1}^m \chi_j
e^{-\varphi_j} \frac{
P_2(e^{\varphi_j}\rho_j)}{|y_{\xi_j}(x)|^4}\right] dv_g. \nonumber
\end{eqnarray*}
By (\ref{chiovery2}), (\ref{chiovery2bis}) and the splitting of $S$ as the union of $\cup_{j=1}^m B_r(\xi_j)$ and $S \setminus \cup_{j=1}^m B_r(\xi_j)$, $r\leq r_0$, we easily deduce that
\begin{eqnarray*}
B_\chi(\xi)&=& -2\pi \sum_{j=1}^m [\Delta_g  \rho_j(\xi_j) -2 K(\xi_j) \rho_j(\xi_j)] \log \rho_j(\xi_j) -\frac{A(\xi)}{2}\\
&&+ 8 \int_{S \setminus \cup_{j=1}^m B_r(\xi_j)}   ke^{8\pi \sum_{j=1}^m G(x,\xi_j)} dv_g-\frac{8\pi}{r^2} \sum_{j=1}^m \rho_j(\xi_j)-A(\xi) \log \frac{1}{r} \\
&&+8\sum_{j=1}^m \int_{B_r(\xi_j)}\frac{
e^{\varphi_j(x)}\rho_j(x)-P_2(e^{\varphi_j}\rho_j)(x)}{|y_{\xi_j}(x)|^4}\,e^{-\varphi_j(x)} dv_g
\end{eqnarray*}
in view of $\frac{\rho_j}{|y_{\xi_j}(x)|^4}=ke^{8\pi \sum_{j=1}^m
G(x,\xi_j)}$ in $B_{r_0}(\xi_j)$, \eqref{gaussian} and the definitions of $A(\xi)$, $P_2(e^{\varphi_j}\rho_j)$. As a by-product we have that $B_\chi(\xi)$ does not depend on $\chi$ and $r \leq r_0$. Since
$$\lim_{r \to 0} \int_{B_r(\xi_j)}\frac{
e^{\varphi_j(x)}\rho_j(x)-P_2(e^{\varphi_j}\rho_j)(x)}{|y_{\xi_j}(x)|^4}\,e^{-\varphi_j(x)} dv_g=0$$
in view of $e^{\varphi_j(x)}\rho_j(x)-P_2(e^{\varphi_j}\rho_j)(x)=o(|y_{\xi_j}(x)|^2)$ as $x \to \xi_j$, we have that $B_\chi(\xi)$ coincides with $B(\xi)$ as defined in (\ref{B}).

\medskip \noindent Finally, we get the following expansion for $J_{8\pi m}(W)$ as $\delta \to 0$:
\begin{eqnarray} \label{energyMF}
J_{8\pi m}(W)=-8 \pi m(1+\log (\pi m)) -32 \pi^2
\varphi_m(\xi)+A(\xi) \de^2 \log
\de-B(\xi)\delta^2+o(\delta^2).
\end{eqnarray}
Since
$$\log \int_S k e^W dv_g=-2\log \delta+\log (\pi m)+O(\delta^2|\log
\delta|),$$ by \eqref{repla1} we then deduce that
\begin{eqnarray*}
J_\lambda(W)&=&J_{8\pi m}(W)-(\lambda-8\pi m) \log \int_S k e^W
dv_g \\
&=&J_{8\pi m}(W)-(\lambda-8\pi m)(-2\log \delta+\log (\pi
m))+O(\delta^4 |\log \delta|^2)
\end{eqnarray*}
and the proof is complete.
\end{proof}

\medskip \noindent We establish now expansion \eqref{JUt} in a $C^1$-sense in $\xi$, where the derivatives in $\xi$  are with respect to a given coordinate system.
\begin{proof}[{\bf Proof (of (\ref{JUt}) in $C^1(\Xi)$):}]
We just need to expand the derivatives of $J_\lambda(W)$ in $\xi$. Let us fix $i\in\{1,2\}$ and $j\in\{1,\dots,m\}$. We have that
$$\fr_{(\xi_j)_i}[J_\la(W)]=-\int_S\lf[\lab W+{\la ke^{W}\over
\int_S ke^W dv_g}\rg]\fr_{(\xi_j)_i}W dv_g.$$ Arguing as in Lemma \ref{ewfxi}, it is easy to show that
\begin{eqnarray}\label{dxiw} \fr_{(\xi_j)_i}W_q&=&-2{\chi_q \over
\de_q^2+|y_{\xi_q}(x)|^2}\left[\fr_{(\xi_j)_i} |y_{\xi_q}(x)|^2+\delta_q^2
\fr_{(\xi_j)_i} (\log \rho_q(\xi_q))
\right]\\
&&-4 \log |y_{\xi_q}(x)|\fr_{(\xi_j)_i}\chi_q +8\pi
\fr_{(\xi_j)_i} H(x,\xi_q)+O(\de^2|\log\de|)\nonumber
\end{eqnarray}
does hold uniformly in $S$. In particular there hold
$$\fr_{(\xi_j)_i}W_q=-16\pi {\chi_q \delta_q^2 \over
\de_q^2+|y_{\xi_q}(x)|^2} \fr_{(\xi_j)_i} G(\xi_q,\xi_j)
+O(\de^2|\log\de|)  \qquad \forall\:q \not= j$$ uniformly in $S$
and
\begin{eqnarray*} \fr_{(\xi_j)_i}W_j=8\pi \fr_{(\xi_j)_i}
G(x,\xi_j)+O(\de^2|\log\de|)
\end{eqnarray*}
locally uniformly in $S \setminus \{\xi_j \}$. Then we have that:\\
- for $q\ne l,j$
$$\int_S \chi_l e^{-\varphi_l} e^{U_l}\fr_{(\xi_j)_i}W_q dv_g=O(\de^2|\log\de|)$$
in view of $\chi_l \chi_q \equiv 0$;\\
- for $l\ne j$
\begin{eqnarray*}
\int_S \chi_l e^{-\varphi_l} e^{U_l}\fr_{(\xi_j)_i}W_l dv_g&=& -16
\pi \fr_{(\xi_j)_i} G(\xi_l,\xi_j) \int_S \chi_l^2 e^{-\varphi_l}
e^{U_l} {\delta_l^2 \over
\de_l^2+|y_{\xi_l}(x)|^2} dv_g +O(\de^2|\log\de|)\\
&=&-128 \pi  \fr_{(\xi_j)_i} G(\xi_l,\xi_j) \int_{B_{r_0\over
\de_l}(0)}{dy\over
(1+|y|^2)^3}+O(\de^2|\log\de|)\\
&=&-64\pi^2 \fr_{(\xi_j)_i} G(\xi_l,\xi_j)+O(\de^2|\log\de|)
\end{eqnarray*}
and
\begin{eqnarray*}
\int_S \chi_l e^{-\varphi_l}e^{U_l} \fr_{(\xi_j)_i}W_j dv_g=64 \pi^2
\fr_{(\xi_j)_i} G(\xi_l,\xi_j)+O(\de^2|\log\de|)
\end{eqnarray*}
in view of Lemma \ref{ieuf}. So we have that for $l \not= j$
$$\int_S \chi_l e^{-\varphi_l}e^{U_l} \fr_{(\xi_j)_i}W dv_g=O(\de^2 |\log \de|).$$
If $l=j$, by Lemma \ref{ieuf} we have that
\begin{eqnarray*}
&&\int_S \chi_j e^{-\varphi_j}e^{U_j}\fr_{(\xi_j)_i}W dv_g=\int_S \chi_j e^{-\varphi_j}e^{U_j}\fr_{(\xi_j)_i}W_j dv_g+O(\de^2 |\log \de|)\\
&&=\int_S \chi_j e^{-\varphi_j} e^{U_j} \left[
\chi_j \fr_{(\xi_j)_i}( U_j-\log(8 \de_j^2))+8\pi
\fr_{(\xi_j)_i} H(x,\xi_j)\right]dv_g+O(\de^2|\log\de|)\\
&&=
\fr_{(\xi_j)_i}\left[\int_S \chi_j^2 e^{-\varphi_j} e^{U_j}  dv_g \right]+\int_S \chi_j^2 e^{-\varphi_j} e^{U_j} \fr_{(\xi_j)_i} \varphi_j dv_g
-8\pi \fr_{(\xi_j)_i} \log \rho_j(\xi_j)+64\pi^2 \fr_{(\xi_j)_i} H(x,\xi_j)\Big|_{x=\xi_j}\\
&& +O(\de^2|\log\de|)=-8\pi \fr_{(\xi_j)_i} \log \rho_j(\xi_j)+64\pi^2 \fr_{(\xi_j)_i} H(x,\xi_j)\Big|_{x=\xi_j} +O(\de^2|\log\de|)
\end{eqnarray*}
in view of $\fr_{(\xi_j)_i} \log (8\de_j^2)=\fr_{(\xi_j)_i} \log \rho_j(\xi_j)$, $\fr_{(\xi_j)_i} \varphi_j(\xi_j)=0$ and
$$\fr_{(\xi_j)_i} \bigg(\int_S \chi_j^2 e^{-\varphi_j} e^{U_j}dv_g\bigg)= 8 \fr_{(\xi_j)_i}  \log \rho_j(\xi_j) \int_{\mathbb{R}^2}  \frac{|y|^2-1}{(1+|y|^2)^3} dy +O(\de^2)=O(\de^2).$$ In view of $\int_S \fr_{(\xi_j)_i}W dv_g=0$  we can compute
\begin{eqnarray}
-\int_S \lab W\fr_{(\xi_j)_i}W dv_g &=& \sum_{l=1}^m \int_S
\chi_l e^{-\varphi_l} e^{U_l}\fr_{(\xi_j)_i}W dv_g \nonumber \\
&=& -8\pi \fr_{(\xi_j)_i} \log \rho_j(\xi_j)+64\pi^2 \fr_{(\xi_j)_i} H(x,\xi_j)\Big|_{x=\xi_j} +O(\de^2|\log\de|) \nonumber \\
&=& -32 \pi^2 \fr_{(\xi_j)_i} \varphi_m(\xi) +O(\de^2|\log\de|)
\label{1term}
\end{eqnarray}
in view of $G(x_1,x_2)=G(x_2,x_1)$ for all $x_1 \not=x_2$  and $\fr_{(\xi_j)_i} H(x,\xi_j)\Big|_{x=\xi_j}= \frac{1}{2}\fr_{(\xi_j)_i} [H(\xi_j,\xi_j)]$. In order to give an expansion of the second term in $\fr_{(\xi_j)_i}[J_\la(W)]$, first observe that by Lemma \ref{ewfxi} there hold $ke^W={e^{\alpha_{\de,\xi}} \over 8\de_j^2} \rho_je^{U_j}[1+O(\de^2)]$ uniformly in $B_{r_0}(\xi_j)$ and $ke^W =O(1)$ uniformly in $S \sm\cup_{j=1}^m B_{r_0}(\xi_j)$.
So we have that
\begin{eqnarray*}
&& 8 e^{-\alpha_{\de,\xi}} \int_S ke^W \fr_{(\xi_j)_i}W dv_g=
\sum_{l,q=1}^m \de_q^{-2} \int_{B_{r_0}(\xi_q)} \rho_q e^{U_q}
(1+O(\delta^2)) \fr_{(\xi_j)_i}W_l dv_g+O(1)\\
&&= \de_j^{-2} \int_{B_{r_0}(\xi_j)} \rho_j e^{U_j}
 (1+O(\delta^2)) \fr_{(\xi_j)_i}W_j dv_g-16\pi
 \sum_{l \not= j} \fr_{(\xi_j)_i} G(\xi_l,\xi_j) \int_{B_{r_0}(\xi_l)} \rho_l e^{U_l} {dv_g \over \de_l^2+|y_{\xi_l}(x)|^2}\\
 &&+  8\pi \sum_{l \not=q} \de_q^{-2} \int_{B_{r_0}(\xi_q)} \rho_q
e^{U_q}  \fr_{(\xi_j)_i} H(x,\xi_l) dv_g +O(|\log \delta|)
\end{eqnarray*}
in view of $\fr_{(\xi_j)_i}W_l=8\pi \fr_{(\xi_j)_i} H(x,\xi_l)+O(\de^2|\log\de|)$ in $B_{r_0}(\xi_q)$ when $q \not=l$. Since
$$\fr_{(\xi_j)_i}W_j=\chi_j \fr_{(\xi_j)_i}[U_j-\log(8\de_j^2)] -4 \log |y_{\xi_j}(x)|\fr_{(\xi_j)_i}\chi_j +8\pi
\fr_{(\xi_j)_i} H(x,\xi_j)+O(\de^2|\log\de|),$$
we have that
\begin{eqnarray*}
&& \int_{B_{r_0}(\xi_j)} \rho_j e^{U_j} \fr_{(\xi_j)_i}W_j dv_g\\
&&=  \int_S \chi_j \rho_j e^{U_j}\fr_{(\xi_j)_i} U_j dv_g
 + \int_S \chi_j \rho_j e^{U_j}[ 8\pi \fr_{(\xi_j)_i} H(x,\xi_j)- \fr_{(\xi_j)_i}  \log \rho_j(\xi_j)]+O(\de^2|\log\de|)\\
&&=\fr_{(\xi_j)_i} \left[\int_S \chi_j \rho_j e^{U_j} dv_g \right]
 - \fr_{(\xi_j)_i}  \log \rho_j(\xi_j) \int_S \chi_j \rho_j e^{U_j}+O(\de^2|\log\de|)
\end{eqnarray*}
in view of $ \fr_{(\xi_j)_i}  \log \rho_j(x)=8\pi  \fr_{(\xi_j)_i}  H(x,\xi_j)$. Since by the Taylor expansion of $e^{\hat \varphi_j} (\rho_j \circ y_{\xi_j}^{-1})$ at $0$ and the symmetries we have that
\begin{eqnarray*}
&& \fr_{(\xi_j)_i} \left[\int_S  \chi_j \rho_j e^{U_j} dv_g \right]= \int_{B_{2r_0}(0)} \chi(|y|) \fr_{(\xi_j)_i} \left[ e^{\hat \varphi_j(y)} (\rho_j \circ y_{\xi_j}^{-1})(y)\right]  \frac{8\delta_j^2}{(\delta_j^2+|y|^2)^2}  dy \\
&&+\fr_{(\xi_j)_i}  \log \rho_j(\xi_j)\int_{B_{2r_0}(0)} \chi(|y|) e^{\hat \varphi_j(y)} (\rho_j \circ y_{\xi_j}^{-1})(y)  \frac{8\delta_j^2(|y|^2-\de_j^2)}{(\delta_j^2+|y|^2)^3}  dy =8\pi \fr_{(\xi_j)_i} \rho_j(\xi_j)+O(\delta^2|\log \delta|)
\end{eqnarray*}
in view of $\fr_{(\xi_j)_i} \left[ e^{\hat \varphi_j(0)} (\rho_j \circ y_{\xi_j}^{-1})(0)\right]
=\fr_{(\xi_j)_i} \rho_j(\xi_j)$ and $\int_{\mathbb{R}^2} \frac{|y|^2-1}{(1+|y|^2)^3}  dy=0$, by Lemma \ref{ieuf} we then deduce that
\begin{eqnarray*}
\int_{B_{r_0}(\xi_j)} \rho_j e^{U_j} \fr_{(\xi_j)_i}W_j dv_g= O(\de^2|\log\de|).
\end{eqnarray*}
Since by the Taylor expansion of $e^{\hat \varphi_l} (\rho_l \circ y_{\xi_l}^{-1})$ at $0$ and the symmetries we have that
\begin{eqnarray*}
\int_{B_{r_0}(\xi_l)} \rho_l e^{U_l} {dv_g \over
\de_l^2+|y_{\xi_l}(x)|^2} &=&
\delta_l^{-2} \int_{B_{r_0/\delta_l}(0)} (\rho_l \circ y_{\xi_l}^{-1} )(\delta_l y) e^{\hat \varphi_l(\delta_l y)}\frac{8}{(1+|y|^2)^3} dy\\
&=& \de^{-2} \int_{\mathbb{R}^2} \frac{8}{(1+|y|^2)^3} dy+O(1)=
\frac{4 \pi}{\de^2}+O(1),
\end{eqnarray*}
by Lemma \ref{ieuf} we obtain that
\begin{eqnarray*}
e^{-\alpha_{\de,\xi}} \int_S ke^W \fr_{(\xi_j)_i}W dv_g&=& - \frac{8
\pi^2}{\de^2}  \sum_{l \not= j} \fr_{(\xi_j)_i}
G(\xi_l,\xi_j)+  \frac{8 \pi^2}{\de^2} \sum_{l \not= j} \fr_{(\xi_j)_i}
H(\xi_l,\xi_j)+O(|\log \delta|).
\end{eqnarray*}
Since by (\ref{intkeW}) $\int_{S}ke^W dv_g = \frac{\pi
m}{\delta^2} e^{\alpha_{\delta,\xi}}  \left(1+O(\delta^2 |\log
\de | )\right)$, we finally get that
\begin{eqnarray}
&& \int_S {ke^W\over \int_S
ke^W dv_g} \fr_{(\xi_j)_i}W dv_g \label{2term}\\
&&=  - {8\pi \over m}\sum_{l \not=j}\fr_{(\xi_j)_i} G(\xi_l,\xi_j)+{8 \pi \over m}  \sum_{l\not=j }\fr_{(\xi_j)_i} H(\xi_l,\xi_j)
+O(\delta^2|\log \delta|)=O(\delta^2|\log \delta|) \nonumber
\end{eqnarray}
in view of $G(\xi_l,\xi_j)=H(\xi_l,\xi_j)$ for $l\not=j$. In conclusion, by (\ref{1term})-(\ref{2term}) we can write
\begin{eqnarray} \label{derivativexiMF}
\fr_{(\xi_j)_i}[J_{8\pi m}(W)]= -32 \pi^2 \fr_{(\xi_j)_i} \varphi_m(\xi)+O(\delta^2 |\log \delta|).
\end{eqnarray}
By (\ref{repla1}) we have that $\fr_{(\xi_j)_i}[J_\lambda(W)]= \fr_{(\xi_j)_i}[J_{8\pi}(W)]+O(\delta^2|\log \delta|)$, and the proof is complete.
\end{proof}

\medskip \noindent Finally, we address the expansions for the derivatives of $J_\la(W)$ in $\delta$.
\begin{proof}[{\bf Proof (of (\ref{JUt}) in $C^2(\mathbb{R})$):}]
We just focus on the first and second derivative of $J_\lambda(W)$ in $\de$. Since $\fr_\de= \rho_l^{\frac{1}{2}}(\xi_l) \fr_{\de_l} $ in view of \eqref{repla0}, arguing as in Lemma \ref{ewfxi}, it is easy to show that
\begin{eqnarray}\label{ddw}
&&\rho_l^{-\frac{1}{2}}(\xi_l)  \fr_\de W_l=- \chi_l \frac{4
\delta_l}{\delta_l^2+|y_{\xi_l}(x)|^2}+ \beta_{\delta_l,\xi_l}-4
\delta_l F_{\xi_l}+O(\delta^3 |\log
\delta|)\\
&&\rho_l^{-1}(\xi_l) \fr_{\de\de} W_l=4\chi_l
\frac{\delta_l^2-|y_{\xi_l}(x)|^2}{(\delta_l^2+|y_{\xi_l}(x)|^2)^2}+
\gamma_{\delta_l,\xi_l}-4 F_{\xi_l}+O(\delta^2 |\log
\delta|)\label{dddw}
\end{eqnarray}
do hold uniformly in $S$, where
$$\beta_{\delta_l,\xi_l}=-{8\pi\over|S|} \delta_l \log \delta_l+{4\delta_l \over|S|}\lf(\int_{\mathbb{R}^2}
\chi(|y|) \frac{e^{\hat \varphi_\xi(y)}-1}{|y|^2}dy- \int_{\mathbb{R}^2} {\chi'(|y|) \log |y|\over
|y| } dy \rg)$$
and
$$\gamma_{\delta_l,\xi_l}=-{8\pi\over|S|} \log \delta_l+{4 \over|S|}\lf(\int_{\mathbb{R}^2}
\chi(|y|) \frac{e^{\hat \varphi_\xi(y)}-1}{|y|^2}dy-2\pi -
\int_{\mathbb{R}^2} {\chi'(|y|) \log |y|\over |y| } dy \rg).$$ By
Lemma \ref{ieuf} we then have that
\begin{eqnarray*}
&&\rho_l^{-\frac{1}{2}}(\xi_l) \int_S \chi_j e^{-\varphi_j}e^{U_j} \fr_\de W_l dv_g\\
&&=
-\int_S \chi_j \chi_l e^{-\varphi_j}e^{U_j}  \frac{4 \delta_l}{\delta_l^2+|y_{\xi_l}(x)|^2}dv_g+8\pi
 \beta_{\delta_l,\xi_l}-32 \pi \delta_l F_{\xi_l}(\xi_j)+O(\delta^3|\log \delta|^2)\\
 &&= - \frac{32}{\delta_j} \delta_{jl} \int_{B_{r_0/\delta_j}(0)}  \frac{dy}{(1+|y|^2)^3}+8\pi
 \beta_{\delta_l,\xi_l}-32 \pi  \delta_l F_{\xi_l}(\xi_j)+O(\delta^3|\log \delta|^2)\\
&&= - \frac{16 \pi}{\delta_j} \delta_{jl}  +8\pi
\beta_{\delta_l,\xi_l}-32 \pi  \delta_l F_{\xi_l}(\xi_j)+O(\delta^3|\log \delta|^2),
\end{eqnarray*}
\begin{eqnarray*}
&&\rho_l^{-1}(\xi_l)\int_S \chi_j e^{-\varphi_j}e^{U_j} \fr_{\de \de} W_l dv_g\\
&&=
4\int_S \chi_j \chi_l e^{-\varphi_j}e^{U_j}  \frac{\delta_l^2-|y_{\xi_l}(x)|^2}{(\delta_l^2+|y_{\xi_l}(x)|^2)^2} dv_g+8\pi
 \gamma_{\delta_l,\xi_l}-32 \pi  F_{\xi_l}(\xi_j)+O(\delta^2 |\log \delta|^2)\\
&&= \frac{32 }{\delta_j^2} \delta_{jl} \int_{B_{r_0/\delta_j}(0)} \left[\frac{2}{(1+|y|^2)^4}-\frac{1}{(1+|y|^2)^3} \right]dy+8\pi
 \gamma_{\delta_l,\xi_l}-32 \pi  F_{\xi_l}(\xi_j)+O(\delta^2 |\log \delta|^2)\\
 &&= \frac{16 \pi}{3\delta_j^2} \delta_{jl}  +8\pi
 \gamma_{\delta_l,\xi_l}-32 \pi F_{\xi_l}(\xi_j)+O(\delta^2 |\log \delta|^2)
\end{eqnarray*}
and
\begin{eqnarray*}
&& \rho_l^{-\frac{1}{2}}(\xi_l) \int_S \chi_j e^{-\varphi_j}e^{U_j} \fr_\de U_j \fr_\de W_l
dv_g= \frac{2}{\de} \rho_l^{-\frac{1}{2}}(\xi_l) \int_S \chi_j e^{-\varphi_j}e^{U_j}
\frac{|y_{\xi_j}(x)|^2-\de_j^2}{\de_j^2+|y_{\xi_j}(x)|^2} \fr_\de
W_l dv_g\\
&&= 8 \rho_j(\xi_j)^{\frac{1}{2}} \de_{jl} \int_S \chi_j
e^{-\varphi_j}e^{U_j}
\frac{\de_j^2-|y_{\xi_j}(x)|^2}{(\de_j^2+|y_{\xi_j}(x)|^2)^2} dv_g
+ \frac{16}{\de} \left(\beta_{\de_l,\xi_l}-4\de_l
F_{\xi_l}(\xi_j)\right)\int_{\mathbb{R}^2}
\frac{|y|^2-1}{(1+|y|^2)^3}dy\\
&&+O(\de^2 |\log \de|)=\frac{32 \pi}{3 \de_j^2} \rho_j(\xi_j)^{\frac{1}{2}}
\de_{jl}+O(\de^\gamma)
\end{eqnarray*}
in view of $\int_{\mathbb{R}^2} \frac{|y|^2-1}{(1+|y|^2)^3}dy=0$, where $\delta_{jl}$ denotes the
Kronecker's symbol. Since $\int_S \fr_\de W dv_g=\int_S \fr_{\de
\de} W dv_g=0$, we then deduce the following expansions:
\begin{eqnarray}
\int_S (-\Delta_g W)  \fr_\de W dv_g&=& \sum_{j,l=1}^m \int_S \chi_j e^{-\varphi_j}e^{U_j} \fr_\de W_l dv_g \label{deW} \\
&=&- \frac{16 \pi m}{\de}
+8\pi m \sum_{l=1}^m \rho_l^{\frac{1}{2}}(\xi_l) \beta_{\delta_l,\xi_l}-32 \pi \de \sum_{j,l=1}^m
\rho_l(\xi_l) F_{\xi_l}(\xi_j)+O(\delta^3|\log \de|^2), \nonumber
\end{eqnarray}
\begin{eqnarray}
\int_S (-\Delta_g W)  \fr_{\de \de} W dv_g&=&  \sum_{j,l=1}^m \int_S \chi_j e^{-\varphi_j}e^{U_j} \fr_{\de \de} W_l dv_g \label{dedeW}\\
&=&\frac{16 \pi m}{3 \de^2} +8\pi m \sum_{l=1}^m \rho_l(\xi_l)
\gamma_{\delta_l,\xi_l}-32 \pi \sum_{j,l=1}^m
\rho_l(\xi_l) F_{\xi_l}(\xi_j)+O(\de^2 |\log \de|^2)  \nonumber
\end{eqnarray}
and
\begin{eqnarray}
\int_S -\Delta_g (\fr_\de W)  \fr_\de W dv_g= \sum_{j,l=1}^m \int_S
\chi_j e^{-\varphi_j}e^{U_j} \fr_\de U_j \fr_\de W_l dv_g =
\frac{32 \pi m}{3 \de^2} +O(\de^\gamma) \label{deW2}
\end{eqnarray} as $\de \to 0$. Since by Lemma \ref{ewfxi}
there hold
$$ke^W={e^{\alpha_{\de,\xi}-2 F_{\delta,\xi}(x)}\over 8\de_j^2} \rho_je^{U_j}[1+O(\de^4|\log \delta|)]$$
uniformly in $B_{r_0}(\xi_j)$ and $ke^W=O(1)$, $\fr_\de W=
O(\delta |\log \delta|)$ uniformly in $S \setminus \cup_{j=1}^m
B_{r_0}(\xi_j)$,
by Lemma \ref{ieuf} we can write that
\begin{eqnarray*}
&&\int_S ke^W \fr_\de W dv_g = \sum_{j,l=1}^m
\int_{B_{r_0}(\xi_j)} ke^W \fr_\de W_l dv_g +O(\delta |\log
\delta|)
\\
&&=- \sum_{j=1}^m {e^{\alpha_{\de,\xi}}\over 2 \de}
\int_{B_{r_0}(\xi_j)} e^{-2 F_{\delta,\xi}(x)}
 \frac{ \rho_je^{U_j}}{\delta_j^2+|y_{\xi_j}(x)|^2}
dv_g+\pi {e^{\alpha_{\de,\xi}}\over \de^2}  \left(m \sum_{l=1}^m \rho_l^{\frac{1}{2}}(\xi_l)
\beta_{\delta_l,\xi_l}-4 \sum_{j,l=1}^m \rho_l^{\frac{1}{2}}(\xi_l) \delta_l F_{\xi_l}(\xi_j) \right)\\
&&+O(|\log \delta|)= - \sum_{j=1}^m
{e^{\alpha_{\de,\xi}}\over 2 \de} \left(
\frac{4\pi}{\de^2} e^{-2F_{\de,\xi}(\xi_j)}+\pi
(\Delta_g \rho_j(\xi_j)-2K(\xi_j) \rho_j(\xi_j)) \right)\\
&& +\pi {e^{\alpha_{\de,\xi}}\over \de^2} \left(m \sum_{l=1}^m
\rho_l^{\frac{1}{2}}(\xi_l) \beta_{\delta_l,\xi_l}-4 \sum_{j,l=1}^m \rho_l^{\frac{1}{2}}(\xi_l) \delta_l
F_{\xi_l}(\xi_j) \right)+O(\delta^{-1+\gamma})\\
&&=\pi {e^{\alpha_{\de,\xi}}\over \de^2} \left(-\frac{2m}{\de}+m \sum_{l=1}^m
\rho_l^{\frac{1}{2}}(\xi_l) \beta_{\delta_l,\xi_l}-{\de \over 8\pi} A(\xi)+O(\delta^{1+\gamma})\right)
\end{eqnarray*}
in view of (\ref{gaussian}) and
\begin{eqnarray} \label{expansionF}
\frac{1}{\de} \sum_{j=1}^m e^{-2F_{\de,\xi}(\xi_j)}=\frac{m}{\de}-\frac{2}{\de} \sum_{j,l=1}^m \de_l^2 F_{\xi_l}(\xi_j)+O(\de^3)=
\frac{m}{\de}-2 \sum_{j,l=1}^m \rho_l^{\frac{1}{2}}(\xi_l)  \de_l F_{\xi_l}(\xi_j)+O(\de^3).
\end{eqnarray}
Combining with (\ref{intkeW}) we then get that
\begin{eqnarray} \label{Merry}
\frac{\int_S ke^W \fr_\de W dv_g}{\int_S ke^W dv_g} &=& -\frac{2}{\de}+\sum_{l=1}^m
\rho_l^{\frac{1}{2}}(\xi_l) \beta_{\delta_l,\xi_l}-{\de \over 8\pi m} A(\xi)-\frac{A(\xi)}{4 \pi m} \delta \log \delta+\frac{B(\xi)}{4\pi m} \delta\\
&&-\frac{4}{m \delta}\sum_{j=1}^m F_{\delta,\xi}(\xi_j)+o(\de),\nonumber
\end{eqnarray}
which yields to
\begin{eqnarray} \label{derivdelta}
\fr_{\de}[J_{8\pi m}(W)]&=&
\int_S (-\Delta_g W)  \fr_\de W dv_g-8\pi m \frac{\int_S ke^W \fr_\de W dv_g}{\int_S ke^W dv_g} \nonumber\\
&=& 2 A(\xi) \delta \log \delta+[A(\xi)-2 B(\xi)] \delta+o(\de).
\end{eqnarray}
Since by (\ref{repla1}) and (\ref{Merry})  there holds
$$-(\lambda-8\pi m) \frac{\int_S ke^W \fr_\de W dv_g}{\int_S ke^W dv_g}=\frac{2(\lambda-8\pi m)}{\de}+O(\de^3 |\log \de|^2),$$
by (\ref{derivdelta}) we deduce the validity of (\ref{JUt}) for the first derivative in $\de$.

\medskip \noindent Towards the expansion of the second derivative, we proceed in a similar way with the aid of the expansion for $\fr_{\de\de} W_l$.
Since
$$ke^W={e^{\alpha_{\de,\xi}-2 F_{\delta,\xi}(x)}\over 8\de_j^2} \rho_je^{U_j}[1+O(\de^4|\log \delta|)]$$
and $ke^W=O(1)$, $\fr_{\de \de} W+(\fr_\de W)^2=
O(|\log \delta|)$ do hold uniformly in $B_{r_0}(\xi_j)$ and $S \setminus \cup_{j=1}^m
B_{r_0}(\xi_j)$, respectively, by Lemma \ref{ieuf} we can write that
\begin{eqnarray*}
&&\int_S ke^W [\fr_{\de \de}W+(\fr_\de W)^2]  dv_g = \sum_{j=1}^m
\int_{B_{r_0}(\xi_j)} ke^W [\fr_{\de \de} W+ (\fr_\de W)^2 ] dv_g +O(|\log
\delta|)
\\
&&= \sum_{j=1}^m {e^{\alpha_{\de,\xi}}\over 2 \de^2}
\int_{B_{r_0}(\xi_j)} e^{-2 F_{\delta,\xi}(x)}
 \rho_je^{U_j} \frac{5\delta_j^2-|y_{\xi_j}(x)|^2}{(\delta_j^2+|y_{\xi_j}(x)|^2)^2} dv_g\\
 &&-\sum_{j=1}^m {e^{\alpha_{\de,\xi}}\over \de}
\int_{B_{r_0}(\xi_j)} e^{-2 F_{\delta,\xi}(x)}
 \frac{\rho_je^{U_j} }{\delta_j^2+|y_{\xi_j}(x)|^2} \left( \sum_{l=1}^m \rho_l^{\frac{1}{2}}(\xi_l)  \beta_{\delta_l,\xi_l}- 4 \de \sum_{l=1}^m
\rho_l(\xi_l) F_{\xi_l}\right)
dv_g\\
&&+\pi {e^{\alpha_{\de,\xi}}\over \de^2}  \left(m \sum_{l=1}^m \rho_l(\xi_l)
\gamma_{\delta_l,\xi_l}-4 \sum_{j,l=1}^m \rho_l (\xi_l) F_{\xi_l}(\xi_j) \right)+O(|\log \delta|^2),
\end{eqnarray*}
and then
\begin{eqnarray*}
&&{\delta^2 e^{-\alpha_{\de,\xi}}\over \pi} \int_S ke^W [\fr_{\de \de}W+(\fr_\de W)^2]  dv_g = \sum_{j=1}^m
\left(
\frac{6}{\de^2} e^{-2F_{\de,\xi}(\xi_j)}+\frac{1}{2}
(\Delta_g \rho_j(\xi_j)-2K(\xi_j) \rho_j(\xi_j)) \right)\\
&&-{4\over \de} \sum_{j=1}^m
\left( \sum_{l=1}^m \rho_l^{\frac{1}{2}}(\xi_l)  \beta_{\delta_l,\xi_l}- 4 \de \sum_{l=1}^m
\rho_l(\xi_l) F_{\xi_l}(\xi_j) \right)+m \sum_{l=1}^m
\rho_l (\xi_l) \gamma_{\delta_l,\xi_l}-4 \sum_{j,l=1}^m \rho_l (\xi_l) F_{\xi_l}(\xi_j) +O(\delta^{\gamma})\\
&&=\frac{6m}{\de^2}+m \sum_{l=1}^m
\rho_l(\xi_l) \gamma_{\delta_l,\xi_l}-{4m \over \de} \sum_{l=1}^m  \rho_l^{\frac{1}{2}}(\xi_l)  \beta_{\delta_l,\xi_l}
+{A(\xi) \over 8 \pi} +O(\delta^{\gamma})
\end{eqnarray*}
in view of (\ref{gaussian}) and (\ref{expansionF}).
Combining with (\ref{intkeW}) we then get that
\begin{eqnarray}
&&\frac{\int_S ke^W [\fr_{\de \de} W+(\fr_{\de} W)^2] dv_g}{\int_S ke^W dv_g} = \frac{6}{\de^2}+{3A(\xi)\over 4 \pi m} \log \de +
\sum_{l=1}^m
\rho_l(\xi_l) \gamma_{\delta_l,\xi_l} \label{Merry1}\\
&&-{4 \over \de} \sum_{l=1}^m  \rho_l^{\frac{1}{2}}(\xi_l)  \beta_{\delta_l,\xi_l}+{A(\xi) \over 8 \pi m} -\frac{3 B(\xi)}{4 \pi m}+\frac{12}{m \delta^2}\sum_{j=1}^m F_{\delta,\xi}(\xi_j) +o(1). \nonumber
\end{eqnarray}
Since
\begin{eqnarray*}
\fr_{\de \de}[J_\lambda(W)]&=&
\int_S (-\Delta_g W)  \fr_{\de \de} W dv_g-\lambda \frac{\int_S ke^W [\fr_{\de \de} W+(\fr_{\de} W)^2] dv_g}{\int_S ke^W dv_g} \\
&&+\int_S (-\Delta_g \fr_\de W)  \fr_\de W dv_g+\lambda
\bigg(\frac{\int_S ke^W \fr_\de W dv_g}{\int_S ke^W dv_g}\bigg)^2,
\end{eqnarray*}
by (\ref{dedeW}), (\ref{deW2}), (\ref{Merry}) and (\ref{Merry1}) we deduce that
\begin{eqnarray} \label{derivdelta2}
\fr_{\de \de}[J_{8\pi m}(W)]=2 A(\xi)\log \de +3A(\xi)-2 B(\xi) +o(1).
\end{eqnarray}
Since by (\ref{repla1}), (\ref{Merry}) and (\ref{Merry1})
$$(\lambda-8\pi m ) \bigg[\frac{\int_S ke^W [\fr_{\de \de} W+(\fr_{\de} W)^2] dv_g}{\int_S ke^W dv_g} - \bigg(\frac{\int_S ke^W \fr_\de W dv_g}{\int_S ke^W dv_g}\bigg)^2\bigg]=2{\la-8\pi m \over\de^2}+O(\de^2 |\log \de|^2),$$
by (\ref{derivdelta2}) we deduce the validity of (\ref{JUt}) also for the second derivative in $\de$, and the proof is complete. \end{proof}


\section{Variational reduction and proof of main results}
\noindent In the so-called nonlinear Lyapunov-Schimdt reduction,
the first step is the solvability theory for the operator $L$
given in (\ref{ol}), obtained as the linearization of
\eqref{mfeot} at the approximating solution $W$. As $\de \to 0$
observe that formally the operator $L$, scaled and centered at $0$
by setting $y =y_{\xi_j}(x)/\delta_j $, approaches $\hat L$
defined in $\mathbb{R}^2$ as
$$\hat L(\phi) = \Delta\phi+{8\over (1+|y|^2)^2}\lf(\phi -
{1\over\pi}\int_{\R^2}{\phi(z)\over (1+|z|^2)^2}\,dz\rg).$$
Due to the intrinsic invariances, the kernel of $\hat L$ in $L^\infty(\R^2)$  is non-empty and is spanned by $1$ and $Y_j$, $j=0,1,2$, where
\begin{equation*}
Y_{i}(y) = { 4 y_i \over 1+|y|^2} ,\qquad
i=1,2,\qquad\text{and}\qquad
Y_{0}(y) = 2\,{1-|y|^2\over 1+|y|^2}.
\end{equation*}
Since \cite{DeKM,EGP} it is by now rather standard to show the invertibility of $L$ in a suitable ``orthogonal" space, and a sketched proof of it will be given in Appendix A. However, for Dirichlet Liouville-type equations on bounded domains as in \cite{DeKM,EGP}, the corresponding limiting operator $\tilde L$ takes the form $\tilde L(\phi)=\Delta\phi+{8\over (1+|y|^2)^2}\phi$ and the function $1$ does not belong to its kernel, making possible to disregard the ``dilation parameters" $\de_i$ in the reduction. As we will see, one additional parameter $\de$ is needed in the reduction and in this respect our problem displays a new feature w.r.t. Dirichlet Liouville-type equations, making our situation very similar to the one arising in the study of critical problems in higher dimension.

\medskip \noindent To be more precise, for $i=0,1,2$ and $j=1,\dots,m$ introduce the functions
\begin{equation*}
Z_{ij}(x) = Y_i\lf({y_{\xi_j}(x)\over \delta_j}\rg)=\left\{\begin{array}{ll}
2  {\de_j^2- |y_{\xi_j}(x)|^2\over \de_j^2+|y_{\xi_j}(x)|^2} &\hbox{for }i=0\\
 {4\de_j(y_{\xi_j}(x))_i \over \de_j^2+|y_{\xi_j}(x)|^2}&\hbox{for }i=1,2, \end{array} \right.
\end{equation*}
and set $Z=\displaystyle \sum_{l=1}^m Z_{0l}$. For $i=1,2$ and $j=1,\dots,m$, let $PZ$, $PZ_{ij}$ be the projections of
$Z$, $Z_{ij}$ as the solutions in $\bar H$ of
\begin{equation} \label{ePZ}
\begin{array}{rl}
\Delta_g PZ&=\chi_j \Delta_g Z-\frac{1}{|S|}\int_S
\chi_j \Delta_g Z dv_g\\
 \Delta_g PZ_{ij} &=\chi_j \Delta_g Z_{ij}-\frac{1}{|S|}\int_S
\chi_j  \Delta_g Z_{ij} dv_g.
\end{array}
\end{equation}
In Appendix A we will prove the following result:
\begin{prop} \label{p2}
There exists $\delta_0>0$ so that for all $0<\delta\leq \delta_0$,
$h\in C(S)$ with $\int_Sh\,dv_g=0$, $\xi \in \Xi$ there is a
unique solution $\phi \in \bar H \cap W^{2,2}(S)$ and
$c_0,c_{ij} \in \mathbb{R}$ of
\begin{equation}\label{plco}
\left\{ \begin{array}{ll}
L(\phi) = h +c_0 \Delta_g PZ + \displaystyle \sum_{i=1}^{2} \sum_{j=1}^m c_{ij} \Delta_g PZ_{ij}&\text{in }S\\
\int_S \phi \Delta_g PZ dv_g=\int_S \phi \Delta_g PZ_{ij} dv_g=0 &\forall\: i=1,2,\, j=1,\dots,m.
\end{array} \right.
\end{equation}
Moreover, the map $(\de,\xi) \mapsto (\phi,c_0,c_{ij})$ is
twice-differentiable in $\de$ and one-differentiable in $\xi$ with
\begin{eqnarray}
&&\|\phi \|_\infty \le C |\log \de| \|h\|_*\:,\qquad |c_0|+\displaystyle \sum_{i=1}^{2} \sum_{j=1}^m  |c_{ij}|\le C\|h\|_* \label{estmfe1} \\
&& \|\fr_\de \phi\|_\infty+\sum_{i=1}^2 \sum_{j=1}^m  \|\fr_{(\xi_j)_i} \phi\|_\infty +
{\de \over |\log \de|} \|\fr_{\de\de} \varphi \|_\infty \le C {|\log \de|^2 \over\de} \|h\|_*\label{estd}
\end{eqnarray}
for some $C>0$.
\end{prop}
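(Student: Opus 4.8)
\noindent The proof runs along the by-now-standard linear Lyapunov--Schmidt scheme; the plan is to establish first the a priori bounds
$$\|\phi\|_\infty\le C|\log\de|\,\|h\|_*,\qquad |c_0|+\sum_{i=1}^{2}\sum_{j=1}^{m}|c_{ij}|\le C\|h\|_*$$
for every solution of \eqref{plco}, and then to deduce solvability and uniqueness from the Fredholm alternative and the derivative bounds \eqref{estd} by differentiating the equation. Two points depart mildly from the classical picture: the constant $1$ lies in the kernel of $\widehat L$ --- harmless because of the constraint $\int_S\phi\,dv_g=0$ built into $\bar H$ --- and, since \eqref{repla0} leaves only the single dilation parameter $\de$, only the combined mode $PZ$ (not the individual $Z_{0j}$'s) is projected out, which is exactly what makes the reduction with the $2m+1$ multipliers $c_0,c_{ij}$ correct. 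For the multipliers, I would test \eqref{plco} against $PZ$ and against each $PZ_{kl}$ and integrate by parts. Using \eqref{ePZ}, $\Delta_g Z_{kl}=-e^{-\varphi_l}e^{U_l}Y_k(y_{\xi_l}/\de_l)$ and the kernel identities $\Delta Y_k+\frac{8}{(1+|z|^2)^2}Y_k=0$, one checks that the $(2m+1)\times(2m+1)$ ``Gram'' matrix with entries $\int_S\Delta_g PZ_{ij}\,PZ_{kl}\,dv_g$, $\int_S\Delta_g PZ\,PZ_{kl}\,dv_g$, $\int_S\Delta_g PZ\,PZ\,dv_g$ is a fixed invertible matrix plus $o(1)$; that $\int_Sh\,PZ_{kl}\,dv_g,\ \int_Sh\,PZ\,dv_g=O(\|h\|_*)$, the $\|\cdot\|_*$-weight having $L^1(S)$-norm $O(1)$ and $\|PZ\|_\infty,\|PZ_{kl}\|_\infty=O(1)$; and that $\int_SL(\phi)\,PZ_{kl}\,dv_g,\ \int_SL(\phi)\,PZ\,dv_g=O(\de|\log\de|\,\|\phi\|_\infty)$, by the same cancellation between $\chi_l\Delta_g Z_{kl}$ and the leading bubble part of $\lambda ke^W/\int_S ke^W\,dv_g$ that was used to control $R$ in Lemma~\ref{estrr0}. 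Solving the linear system yields $|c_0|+\sum_{i,j}|c_{ij}|\le C(\|h\|_*+o(1)\|\phi\|_\infty)$, to be closed by the next step.

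\noindent The bound on $\phi$ is then obtained by contradiction and blow-up. Assume there are $\de_n\to0$, $\xi^n\in\Xi$, $h_n$ with $|\log\de_n|\,\|h_n\|_*\to0$ and solutions with $\|\phi_n\|_\infty=1$; then $c_0^n,c_{ij}^n\to0$ by the previous step. Rescaling at each $\xi^n_j$ via $z\mapsto\phi_n(y_{\xi^n_j}^{-1}(\de^n_j z))$ and using interior elliptic estimates together with \eqref{repla0}, a subsequence converges in $C^1_{\mathrm{loc}}(\mathbb{R}^2)$ to a bounded $\psi_j$ with $\Delta\psi_j+\frac{8}{(1+|z|^2)^2}(\psi_j-\be)=0$, where $\be=\lim_n\big(\int_S ke^W\phi_n\,dv_g\big)\big/\big(\int_S ke^W\,dv_g\big)$; hence $\psi_j-\be$ lies in the bounded kernel of $\Delta+\frac{8}{(1+|z|^2)^2}$, i.e.\ $\psi_j=\be+a_{0j}Y_0+a_{1j}Y_1+a_{2j}Y_2$. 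Letting the orthogonality relations pass to the limit (the average corrections in $\Delta_g PZ_{ij},\Delta_g PZ$ drop since $\int_S\phi_n\,dv_g=0$), the conditions from $PZ_{ij}$ give $a_{1j}=a_{2j}=0$, and that from $PZ$, together with $\int_{\mathbb{R}^2}\frac{Y_0}{(1+|z|^2)^2}\,dz=0$, gives $\sum_j a_{0j}=0$. Away from $\{\xi^n_1,\dots,\xi^n_m\}$, $\phi_n$ solves an equation whose right-hand side tends to $0$ on compacta, so (being bounded) it converges in $C_{\mathrm{loc}}(S\setminus\{\xi_j\})$ to a bounded harmonic function on $S$, hence to a constant $b$. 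A Green's-representation estimate in the neck regions --- precisely where the $|\log\de_n|$ loss comes from, the $\|\cdot\|_*$-weight being $O(1)$ in $L^1$ and its convolution with $\log|\cdot|$ near a concentration point of scale $\de_n$ costing a factor $|\log\de_n|$ --- shows that the inner and outer limits match, $b=\be-2a_{0j}$ for every $j$ (using $Y_0\to-2$ at infinity), so $a_{0j}=0$ for all $j$ and $\phi_n\to\be$ uniformly on $S$; then $0=\int_S\phi_n\,dv_g\to\be|S|$ forces $\be=0$, contradicting $\|\phi_n\|_\infty=1$. This proves the two a priori bounds.

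\noindent For existence and uniqueness I would rewrite \eqref{plco}, after inverting $\Delta_g$ on the mean-zero space and eliminating $c_0,c_{ij}$ through the orthogonality conditions, as an equation $\phi-\mathcal{K}_\de\phi=\bar h$ in $\{\phi\in\bar H:\ \int_S\phi\,\Delta_g PZ\,dv_g=\int_S\phi\,\Delta_g PZ_{ij}\,dv_g=0\}$ with $\mathcal{K}_\de$ compact; the a priori estimate gives injectivity, hence $I-\mathcal{K}_\de$ is invertible, and the quantitative bound \eqref{estmfe1} transfers. The bounds \eqref{estd} follow by differentiating \eqref{plco} once in $\de$ and in each $(\xi_j)_i$, and twice in $\de$, and applying the same solvability theory to the differentiated problems: their right-hand sides are the derivatives of the coefficient of $L$ and of $\Delta_g PZ,\Delta_g PZ_{ij}$ --- of size $O(\de^{-1})$ in $\|\cdot\|_*$ because of the bubble rescaling --- acting on the already-estimated $\phi$ (resp.\ $\fr_\de\phi$), which produces the extra factors $\de^{-1}$ and $|\log\de|$.

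\noindent The main obstacle is the neck analysis in the blow-up step: one must rule out a contribution to $\phi_n$ concentrated at some intermediate scale between $\de_n$ and $r_0$ --- invisible both to the rescaled and to the global limit --- while keeping exact track of the $|\log\de|$ loss built into the weighted norm $\|\cdot\|_*$; the matching $b=\be-2a_{0j}$, which genuinely exploits that only the single mode $PZ$ (not each $Z_{0j}$) has been removed, is the mechanism that closes this gap. The rest, including the parameter-derivative bookkeeping, is routine if tedious.
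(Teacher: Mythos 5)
Your overall architecture (a priori bound by blow-up, multiplier estimates by testing against $PZ$, $PZ_{ij}$, Fredholm alternative, then differentiation of the equation for \eqref{estd}) is the same as the paper's, and most of it is fine. But there is a genuine gap exactly at the point you yourself flag as "the main obstacle": the matching claim $b=\beta-2a_{0j}$ \emph{for every} $j$. The orthogonality conditions only remove the single combined mode $PZ=\sum_j PZ_{0j}$, so the limit analysis gives you just $\sum_j a_{0j}=0$; to close the contradiction you must show each $a_{0j}=0$ separately, and the "Green's-representation estimate in the neck" you invoke cannot deliver this. Indeed, writing $\psi(\xi_j)=\frac{1}{|S|}\int_S\psi\,dv_g+\int_S G(y,\xi_j)[\mathcal{K}\psi-h]\,dv_g$, the dangerous contribution is $-\frac{\log\delta}{2\pi}\int_{B_{r_0}(\xi_j)}\mathcal{K}\psi\,dv_g$: the individual masses $\int_{B_{r_0}(\xi_j)}\mathcal{K}\psi\,dv_g$ are only known to be $o(1)$, which after multiplication by $|\log\delta|$ is not negligible, so the inner limit at scale $\delta_j$ and the outer limit at the point need not coincide (a $\log$-mode of size $O(1/|\log\delta|)$ in the neck is invisible to both rescaled limits but shifts the matching by $O(1)$). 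Only the \emph{sum} over $j$ of these terms is controlled, via $\int_S\mathcal{K}\psi\,dv_g=0$, and that yields precisely the one relation the paper extracts (it fixes the constant $c_0$), not the $m$ relations you need. Asserting the matching is essentially assuming $a_{0j}=0$, which is circular.

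The paper's proof of Proposition \ref{p1} closes this gap by a different device, borrowed from \cite{EGP}: it tests $\psi$ against the extra projected functions $PZ_j$ built from $\beta_j(y)=\frac43\big[2\log\delta_j+\log(1+|y|^2)\big]\frac{1-|y|^2}{1+|y|^2}+\frac83\frac{1}{1+|y|^2}$, which satisfy $e^{\varphi_j}\Delta_g Z_j+e^{U_j}Z_j=e^{U_j}Z_{0j}$; the $\log\delta_j$ weight is exactly what makes the neck contribution computable and yields $\int_{\mathbb{R}^2}\Psi_{j,\infty}\frac{1-|y|^2}{(1+|y|^2)^3}\,dy=0$, hence $a_{0j}=0$ for each $j$. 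After that, one still needs a maximum-principle (barrier) argument in the annuli $B_r(\xi_j)\setminus B_{R\delta_j}(\xi_j)$ to rule out mass at intermediate scales and upgrade to $\|\psi\|_\infty\to0$; in your write-up this step is also subsumed into the unproven matching. If you replace your matching paragraph by the $PZ_j$ test-function computation and the annulus maximum principle, the rest of your argument (multiplier Gram-matrix estimate, Fredholm solvability on the orthogonal subspace, and the derivative bounds obtained by differentiating \eqref{plco} and re-orthogonalizing, with the $\delta^{-1}$ and $|\log\delta|$ losses you indicate) matches the paper's proof of Proposition \ref{p2} and is correct.
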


\medskip \noindent Let us recall that  $u=W+\phi$  solves \eqref{mfeot} if $\phi\in \bar H$ does satisfy (\ref{ephi}). Since the operator $L$ is not fully invertible, in view of Proposition \ref{p2} one can solve the nonlinear problem (\ref{ephi}) just up to a linear combination of $\Delta_g PZ$ and $\Delta_g PZ_{ij}$, as explained in the following (see Appendix B for the proof):
\begin{prop}\label{lpnlabis}
There exists $\delta_0>0$ so that for
all $0<\delta\leq \delta_0$, $\xi \in \Xi$ problem
\begin{equation}\label{pnlabis}
\left\{ \begin{array}{ll}
L(\phi)= -[R+N(\phi)] +c_0\Delta_g PZ +\displaystyle \sum_{i=1}^{2}\sum_{j=1}^m c_{ij} \Delta_g
PZ_{ij}& \text{in } S\\
\int_S \phi \Delta_g PZ dv_g=\int_S \phi \Delta_g PZ_{ij} dv_g= 0 &\forall\:
i=1,2,\, j=1,\dots,m
\end{array} \right.
\end{equation}
admits a unique solution $\phi(\de,\xi) \in \bar H \cap
W^{2,2}(S)$ and $c_0(\de,\xi),\,c_{ij}(\de,\xi) \in \R$, $i=1,2$
and $j=1,\dots,m$, where $\de_j>0$ are as in \eqref{repla0} and
$N$, $R$ are given by \eqref{nlt}, \eqref{R}, respectively. Moreover, the map
$(\delta,\xi)\mapsto
(\phi(\delta,\xi),c_0(\de,\xi),c_{ij}(\de,\xi))$ is
twice-differentiable in $\de$ and one-differentiable in $\xi$ with
\begin{eqnarray} \label{cotaphi1bis}
&& \|\phi\|_\infty\le C\left( \delta |\log \delta |  |\nabla \varphi_m(\xi)|_g+ \delta^{2-\sigma}|\log\delta|^2\right)\\
\label{cotadphi1bis}
&& \|\fr_\de \phi \|_\infty+\sum_{i=1}^2 \sum_{j=1}^m \|\fr_{(\xi_j)_i} \phi \|_\infty\le C \left( |\log \delta |^2 |\nabla \varphi_m(\xi)|_g+\delta^{1-\sigma}|\log\delta|^3\right) \\
\label{cotad2phi1bis} && \|\fr_{\de\de}\phi\|_\infty\le C \left(
\delta^{-1} |\log \delta |^3 |\nabla \varphi_m(\xi)|_g +\delta^{-\sigma}|\log\delta|^4\right).
\end{eqnarray}
\end{prop}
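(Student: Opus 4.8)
The plan is to solve \eqref{pnlabis} by a fixed-point argument built on the linear theory of Proposition \ref{p2}. Write $\phi=T(h)$ for the solution operator furnished by that proposition, which to each $h\in C(S)$ with $\int_S h\,dv_g=0$ associates the unique triple $(\phi,c_0,c_{ij})$ solving \eqref{plco}; by \eqref{estmfe1} one has $\|T(h)\|_\infty\le C|\log\delta|\,\|h\|_*$ and $|c_0|+\sum_{i,j}|c_{ij}|\le C\|h\|_*$. Since $\int_S R\,dv_g=\int_S N(\phi)\,dv_g=0$ for every $\phi\in\bar H$, solving \eqref{pnlabis} is equivalent to finding a fixed point of the map $\mathcal A(\phi)=-T\bigl(R+N(\phi)\bigr)$ on $\bar H\cap W^{2,2}(S)$, after which $c_0(\delta,\xi)$, $c_{ij}(\delta,\xi)$ are read off from Proposition \ref{p2} applied to $h=-(R+N(\phi))$.

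First I would establish two nonlinear estimates in the weighted norm: for $\|\phi\|_\infty,\|\phi_1\|_\infty,\|\phi_2\|_\infty$ small,
\[
\|N(\phi)\|_*\le C\|\phi\|_\infty^2,\qquad
\|N(\phi_1)-N(\phi_2)\|_*\le C\bigl(\|\phi_1\|_\infty+\|\phi_2\|_\infty\bigr)\|\phi_1-\phi_2\|_\infty .
\]
These follow from the explicit form \eqref{nlt}: expanding $e^{W+\phi}=e^W(1+\phi+O(\phi^2))$ and using $\int_S ke^{W+\phi}dv_g=\int_S ke^{W}dv_g\,(1+O(\|\phi\|_\infty))$, one sees $N(\phi)$ is, up to bounded factors, of the form $\lambda\,\tfrac{ke^W}{\int_S ke^W dv_g}\bigl(O(\phi^2)+O(\|\phi\|_\infty^2)\bigr)$, and the key point $\bigl\|\tfrac{ke^W}{\int_S ke^W dv_g}\psi\bigr\|_*\le C\|\psi\|_\infty$ is checked directly from the definition of $\|\cdot\|_*$ together with the expansion $\int_S ke^W dv_g=\pi m\delta^{-2}+O(|\log\delta|)$ from \eqref{ikeW}. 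Combining these with Lemma \ref{estrr0} and $\|T(h)\|_\infty\le C|\log\delta|\,\|h\|_*$, the map $\mathcal A$ is a contraction of the ball
\[
\mathcal B=\Bigl\{\phi\in\bar H:\ \|\phi\|_\infty\le C_0\bigl(\delta|\log\delta|\,|\nabla\varphi_m(\xi)|_g+\delta^{2-\sigma}|\log\delta|^2\bigr)\Bigr\}
\]
into itself for $C_0$ large and $\delta$ small, the quadratic term being negligible on $\mathcal B$. Banach's theorem then gives a unique $\phi=\phi(\delta,\xi)\in\mathcal B$, hence \eqref{cotaphi1bis}, together with $|c_0|+\sum_{i,j}|c_{ij}|\le C\|R+N(\phi)\|_*$.

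The regularity and derivative bounds \eqref{cotadphi1bis}--\eqref{cotad2phi1bis} are the delicate part. I would differentiate the identity $\phi=-T(R+N(\phi))$ in $\delta$ and in the $\xi$-variables; since $DN(\phi)$ has small operator norm on $\mathcal B$ (by the Lipschitz estimate above), $\mathrm{Id}+T\circ DN(\phi)$ is invertible with uniformly bounded inverse, and the chain rule yields, schematically,
\[
\partial\phi=-\bigl(\mathrm{Id}+T\circ DN(\phi)\bigr)^{-1}\Bigl[(\partial T)(R+N(\phi))+T\bigl(\partial R+(\partial N)(\phi)\bigr)\Bigr],
\]
where $\partial$ denotes $\partial_\delta$, $\partial_{(\xi_j)_i}$, or (for the second-order bound) $\partial_{\delta\delta}$. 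Here I would invoke the derivative estimates \eqref{estd} for $T$, the bounds on $\partial_\delta R$, $\partial_{(\xi_j)_i}R$, $\partial_{\delta\delta}R$ in the $\|\cdot\|_*$ norm — obtained exactly as in the proof of Lemma \ref{estrr0} via \eqref{dxiw}, \eqref{ddw}, \eqref{dddw} — and the analogous bounds for the derivatives of $N$, together with the already-proved bound \eqref{cotaphi1bis} on $\phi$. Propagating the powers of $\delta$ and $|\log\delta|$ through this formula produces \eqref{cotadphi1bis} and \eqref{cotad2phi1bis}. I expect the \emph{main obstacle} to be precisely this bookkeeping: controlling $\partial_\delta R$ and especially $\partial_{\delta\delta}R$ in the weighted norm — the dilation parameter $\delta$ being an extra degree of freedom absent in the Dirichlet case — and making sure that the loss of a factor $|\log\delta|^2/\delta$ coming from \eqref{estd} is exactly compensated, so that the final estimates retain the stated form.
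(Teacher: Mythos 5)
Your proposal is correct and follows essentially the same route as the paper: the fixed point $\phi=-T(R+N(\phi))$ on a ball of size $\delta|\log\delta|\,|\nabla\varphi_m(\xi)|_g+\delta^{2-\sigma}|\log\delta|^2$ using the quadratic/Lipschitz estimates on $N$ together with Lemma \ref{estrr0} and \eqref{estmfe1}, followed by differentiation of the fixed-point identity, the derivative bounds \eqref{estd} for $T$, weighted-norm bounds on $\partial_\beta R$ and $\partial_\beta N(\phi)$, and absorption of the $\partial_\beta\phi$ contribution (which you phrase via invertibility of $\mathrm{Id}+T\circ DN(\phi)$, while the paper absorbs the corresponding $o(\|\partial_\beta\phi\|_\infty)$ term directly). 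The bookkeeping you flag as the main obstacle — the cancellations in $\partial_\delta W=\partial_\delta U_j-\tfrac{2}{\delta}+O(\delta|\log\delta|)$ making $\|\partial_\beta R\|_*=O(|\nabla\varphi_m(\xi)|_g+\delta^{1-\sigma}|\log\delta|)$ — is exactly what the paper carries out.
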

\noindent The function $W+\phi(\de,\xi)$ will be a true solution of \eqref{ephi} if $\delta$ and $\xi$ are such that
$c_0(\de,\xi)=c_{ij}(\delta,\xi)=0$ for all $i=1,2,$ and $j=1,\dots,m$. This problem is equivalent to finding critical
points of the reduced energy $E_\lambda(\delta, \xi)= J_\lambda(W+\phi(\delta,\xi))$,
where $J_\lambda$ is given by \eqref{energy}, as stated in
\begin{lem}\label{cpfc0bis}
There exists $\delta_0$ such that, if $(\delta,\xi)\in
(0,\de_0]\times \Xi$ is a critical point of $E_\lambda$, then
$u=W+\phi(\delta,\xi)$ is a solution of \eqref{mfeot}, where
$\de_i$ are given by \eqref{repla0}.
\end{lem}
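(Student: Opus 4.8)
The plan is to show that, at a critical point $(\de,\xi)\in(0,\de_0]\times\Xi$ of $E_\la$, all the Lagrange multipliers $c_0=c_0(\de,\xi)$ and $c_{ij}=c_{ij}(\de,\xi)$ furnished by Proposition~\ref{lpnlabis} are forced to vanish; since, as observed just before that proposition, $u=W+\phi(\de,\xi)$ solves \eqref{mfeot} precisely when all these constants vanish, this gives the statement.

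First I would record a variational identity. Writing \eqref{pnlabis} with $u=W+\phi(\de,\xi)$ in the form
$$-\lab u=\la\Big(\frac{ke^u}{\int_S ke^u\,dv_g}-\frac1{|S|}\Big)-c_0\lab PZ-\sum_{i=1}^2\sum_{j=1}^m c_{ij}\lab PZ_{ij}\quad\text{in }S,$$
an integration by parts yields, for every $\psi\in\bar H$,
$$DJ_\la(u)[\psi]=\int_S\langle\grad u,\grad\psi\rangle_g\,dv_g-\la\,\frac{\int_S ke^u\psi\,dv_g}{\int_S ke^u\,dv_g}=-c_0\int_S\psi\,\lab PZ\,dv_g-\sum_{i=1}^2\sum_{j=1}^m c_{ij}\int_S\psi\,\lab PZ_{ij}\,dv_g,$$
the term carrying $\frac1{|S|}$ disappearing because $\int_S\psi\,dv_g=0$. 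Since $W,\phi\in\bar H$, the curves $\de\mapsto W+\phi(\de,\xi)$ and $(\xi_j)_i\mapsto W+\phi(\de,\xi)$ take values in $\bar H$ and, by Proposition~\ref{lpnlabis} and the smoothness of $J_\la$, are of class $C^1$; hence $E_\la$ is $C^1$ and $\fr E_\la=DJ_\la(W+\phi)[\fr(W+\phi)]$ by the chain rule. Testing the above identity with $\psi=\fr_\de(W+\phi)$ and with $\psi=\fr_{(\xi_j)_i}(W+\phi)\in\bar H$, the equations $\fr_\de E_\la=0$ and $\fr_{(\xi_j)_i}E_\la=0$ ($i=1,2$, $j=1,\dots,m$) become a homogeneous linear system $\mathcal M(\de,\xi)\,c=0$ for $c=(c_0,c_{11},\dots,c_{2m})\in\mathbb{R}^{2m+1}$, whose matrix $\mathcal M$ collects the pairings $\int_S\fr_\de(W+\phi)\,\lab PZ\,dv_g$, $\int_S\fr_\de(W+\phi)\,\lab PZ_{ij}\,dv_g$, $\int_S\fr_{(\xi_j)_i}(W+\phi)\,\lab PZ\,dv_g$ and $\int_S\fr_{(\xi_j)_i}(W+\phi)\,\lab PZ_{kl}\,dv_g$.

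It then remains to prove that $\mathcal M(\de,\xi)$ is invertible for $\de$ small, uniformly for $\xi\in\Xi$. I would first discard the contributions of $\fr_\de\phi$, $\fr_{(\xi_j)_i}\phi$: differentiating the orthogonality relations $\int_S\phi\,\lab PZ\,dv_g=\int_S\phi\,\lab PZ_{ij}\,dv_g=0$ in the parameters, those contributions are seen to equal $-\int_S\phi\,\fr(\lab PZ)\,dv_g$ and $-\int_S\phi\,\fr(\lab PZ_{ij})\,dv_g$, which by \eqref{cotaphi1bis}--\eqref{cotadphi1bis}, the explicit $\de$-dependence of $\lab PZ$, $\lab PZ_{ij}$ through \eqref{ePZ}, and the definition of $\Xi$ (well-separated $\xi_j$'s with $k(\xi_j)\ge r_0$), are of strictly lower order in $\de$ than the leading pairings, which only involve $\fr_\de W$ and $\fr_{(\xi_j)_i}W$. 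To evaluate the latter I would plug in the expansions \eqref{dxiw}, \eqref{ddw}, \eqref{dddw}, use \eqref{ePZ} together with the identities $\fr_{\de_j}U_j=-\de_j^{-1}Z_{0j}$ and $\lab Z_{ij}=-e^{-\varphi_{\xi_j}}e^{U_j}Z_{ij}$ valid in $B_{r_0}(\xi_j)$, and perform the change of variables $z=y_{\xi_j}(x)/\de_j$ (recall $dv_g=e^{\hat\varphi_{\xi_j}}dy$ there). One then finds that the rescaled matrix $\de\,\mathcal M(\de,\xi)$ converges, as $\de\to0$, to a diagonal matrix whose diagonal entries are positive powers of $\rho_j(\xi_j)$ (uniformly bounded away from $0$ on $\Xi$) times the nonzero constants $\int_{\mathbb{R}^2}\frac{8}{(1+|z|^2)^2}Y_i(z)^2\,dz$, $i=0,1,2$, while all off-diagonal entries vanish in the limit since the balls $B_{r_0}(\xi_j)$ are pairwise disjoint and $Y_0,Y_1,Y_2$ are mutually orthogonal with respect to $\frac{8}{(1+|z|^2)^2}\,dz$. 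Therefore $\mathcal M(\de,\xi)$ is invertible for all $0<\de\le\de_0$, which forces $c=0$ and yields that $u=W+\phi(\de,\xi)$ solves \eqref{mfeot}.

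The main obstacle is this last step, the uniform non-degeneracy of $\mathcal M$ as $\de\to0$. The (scalar) dilation parameter $\de$ and the $2m$ translation parameters are genuinely coupled inside $W$, and the corresponding modes $Z$ and $Z_{ij}$ concentrate at comparable but not identical $\de$-scales, so one must keep careful track of the rescalings and of the cross terms, and must use crucially that $\fr_\de\phi$, $\fr_{(\xi_j)_i}\phi$ are quantitatively small by \eqref{cotadphi1bis}. This is also the reason one needs $E_\la$ — hence $\phi(\de,\xi)$ and the expansion \eqref{JUt} — to be of class $C^1$ in $(\de,\xi)$, a regularity already established in Proposition~\ref{lpnlabis} and Theorem~\ref{expansionenergy}.
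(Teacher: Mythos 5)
The paper gives no argument for this lemma (it is dismissed as ``rather standard'' right after Proposition \ref{lpnlabis}), and your proposal supplies exactly the standard Lyapunov--Schmidt argument that is being alluded to: differentiate $E_\la$ via the chain rule, use the equation \eqref{pnlabis} satisfied by $W+\phi$ to turn $\nabla E_\la(\de,\xi)=0$ into a homogeneous linear system for $(c_0,c_{ij})$, and show the coefficient matrix is asymptotically diagonal and non-degenerate as $\de\to0$. Your treatment of the key points is correct: the $\partial_\beta\phi$ contributions are removed by differentiating the orthogonality constraints and controlled by \eqref{cotaphi1bis}--\eqref{cotadphi1bis}, the leading pairings reduce after rescaling to the integrals $\int_{\R^2}\frac{8}{(1+|z|^2)^2}Y_iY_k\,dz$ (diagonal nonzero, off-diagonal zero by the orthogonality of $Y_0,Y_1,Y_2$ and the disjointness of the balls $B_{r_0}(\xi_j)$), in line with \cite{DeKM,EGP}. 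The only quibble is immaterial: after multiplying by $\de$ the diagonal entries carry factors $\rho_j(\xi_j)^{-1/2}$ rather than positive powers of $\rho_j(\xi_j)$, but since $\rho_j(\xi_j)$ is bounded above and away from zero on $\Xi$ this changes nothing in the invertibility of the matrix, hence $c_0=c_{ij}=0$ and $u=W+\phi(\de,\xi)$ solves \eqref{mfeot}.
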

\noindent Once equation \eqref{mfeot} has been reduced to the search of c.p.'s for $E_\lambda$, it becomes crucial to show that the main asymptotic term of $E_\lambda$ is given by $J_\lambda(W)$, for which an expansion has been given in Theorem \ref{expansionenergy}. More precisely, by the estimates in Appendix B we have that
\begin{theo} \label{fullexpansionenergy}
Assume \eqref{repla0}-\eqref{repla1}. The following expansion does
hold
\begin{eqnarray} \label{fullJUt}
E_\lambda (\de,\xi) &=&-8\pi m -\lambda \log (\pi m) -32\pi^2
\varphi_m(\xi)+ 2(\lambda -8\pi m)\log\delta
 +A(\xi) \delta^2 \log \delta\\
 &&-B(\xi)\delta^2+o(\de^2)+r_\lambda(\de,\xi) \nonumber
\end{eqnarray}
in $C^2(\mathbb{R})$ and $C^1(\Xi)$ as $\de\to 0^+$, where
$\varphi_m(\xi)$, $A(\xi)$ and $B(\xi)$ are given by \eqref{fim},
\eqref{v} and \eqref{B}, respectively. The term
$r_\lambda(\de,\xi)$ satisfies
\begin{eqnarray} \label{rlambda}
|r_\lambda(\de,\xi)|+\frac{\de}{|\log \de|} |\nabla
r_\lambda(\de,\xi)|+\frac{\de^2}{|\log \de|^2} |\partial_{\de \de}
r_\lambda(\de,\xi)| \leq C \delta^2 |\log \delta |\, |\nabla \varphi_m(\xi)|_g^2
\end{eqnarray}
for some $C>0$ independent of $(\de,\xi)\in(0,\de_0]\times\Xi$.
\end{theo}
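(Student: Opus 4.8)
The plan is to show that passing from $J_\lambda(W)$ to $E_\lambda(\de,\xi)=J_\lambda(W+\phi(\de,\xi))$ only introduces an error $r_\lambda$ which is quadratically small in $\nabla\varphi_m(\xi)$, so that the main expansion of Theorem \ref{expansionenergy} survives. Since $u=W+\phi$ solves the projected equation \eqref{pnlabis} and $J_\lambda'(W)[\psi]=-\int_S[R+N(0)]\psi\,dv_g+\hbox{(linear correction)}$, I would first Taylor-expand $E_\lambda(\de,\xi)$ around $\phi=0$:
\begin{equation*}
E_\lambda(\de,\xi)=J_\lambda(W)+\int_0^1 \langle J_\lambda'(W+t\phi),\phi\rangle\,dt,
\end{equation*}
and then split this integral using the equation satisfied by $\phi$. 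The key point is that $\langle J_\lambda'(W),\phi\rangle=\int_S\big[\Delta_g W+\lambda\frac{ke^W}{\int_S ke^W dv_g}\big]\phi\,dv_g=-\int_S R\,\phi\,dv_g$ (the mean-zero terms drop since $\phi\in\bar H$), so the first-order term is controlled by $\|R\|_*\|\phi\|_\infty$. By Lemma \ref{estrr0} and the bound \eqref{cotaphi1bis} this is
\begin{equation*}
O\Big(\big(\de|\nabla\varphi_m(\xi)|_g+\de^{2-\sigma}|\log\de|\big)\cdot\big(\de|\log\de||\nabla\varphi_m(\xi)|_g+\de^{2-\sigma}|\log\de|^2\big)\Big),
\end{equation*}
which is $O(\de^2|\log\de|\,|\nabla\varphi_m(\xi)|_g^2)+o(\de^2)$ — precisely the shape of \eqref{rlambda} (the purely $\de^{2-\sigma}$ cross-terms are absorbed into the $o(\de^2)$ already present in \eqref{JUt}).

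**Handling the higher-order term and derivatives.** For the remaining integral $\int_0^1\langle J_\lambda'(W+t\phi)-J_\lambda'(W),\phi\rangle\,dt$ I would write it as $\frac12\langle J_\lambda''(W)\phi,\phi\rangle+O(\|\phi\|_\infty^3)$; using that $J_\lambda''(W)=-L$ on $\bar H$ (up to the compact projection terms), the orthogonality conditions $\int_S\phi\,\Delta_g PZ\,dv_g=\int_S\phi\,\Delta_g PZ_{ij}\,dv_g=0$ together with the equation \eqref{pnlabis} give $\langle L\phi,\phi\rangle=-\int_S[R+N(\phi)]\phi\,dv_g$, so this term is again $O(\|R\|_*\|\phi\|_\infty+\|\phi\|_\infty^2)\cdot\|\phi\|_\infty$, of the same order. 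This yields \eqref{fullJUt}–\eqref{rlambda} in the $C^0$ sense. For the $C^1$-in-$\xi$ and $C^2$-in-$\de$ estimates on $r_\lambda$, I would differentiate the identity $E_\lambda=J_\lambda(W)+\int_0^1\langle J_\lambda'(W+t\phi),\phi\rangle\,dt$ directly: each $\xi$- or $\de$-derivative either falls on $W$ (producing, via \eqref{dxiw}, \eqref{ddw}, \eqref{dddw} and Lemma \ref{ieuf}, the derivative of the explicit expansion \eqref{JUt}, already established in Theorem \ref{expansionenergy}) or on $\phi$, in which case one uses the derivative bounds \eqref{cotadphi1bis}–\eqref{cotad2phi1bis} from Proposition \ref{lpnlabis}. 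The extra $\de^{-1}|\log\de|$ and $\de^{-2}|\log\de|^2$ weights in \eqref{rlambda} are exactly what is needed to match the loss of one (resp. two) powers of $\de$ per derivative reflected in \eqref{cotadphi1bis}–\eqref{cotad2phi1bis}; a key cancellation is that $\partial_\de\langle J_\lambda'(W),\phi\rangle$ also uses $\langle J_\lambda'(W),\partial_\de\phi\rangle=-\int_S R\,\partial_\de\phi\,dv_g$, which does not lose derivatives on $R$.

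**Main obstacle.** The delicate point is the bookkeeping in the differentiated estimates: when a $\de$-derivative hits $\phi$ one picks up a factor $\de^{-1}|\log\de|$, and one must verify that the products of the derivative bounds \eqref{cotadphi1bis}–\eqref{cotad2phi1bis} with $\|R\|_*$, $\|\partial_\de R\|_*$ (which requires differentiating Lemma \ref{estrr0}, giving an analogous bound with a $\de^{-1}$ loss), and with the quadratic terms coming from $N(\phi)$ and $N'(\phi)$, all combine to stay within $C\de^2|\log\de|^{-1}|\nabla\varphi_m(\xi)|_g^2$ for the gradient and $C\de^2|\log\de|^{-2}|\nabla\varphi_m(\xi)|_g^2$ for $\partial_{\de\de}$. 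One has to be careful that the nonlinear term $N(\phi)$ — which is genuinely quadratic in $\phi$ near $W$, so $\|N(\phi)\|_*=O(\|\phi\|_\infty^2)$ up to the concentration weight — does not spoil the estimate; the fact that $\|\phi\|_\infty$ already carries a full power of $\de$ (times $|\nabla\varphi_m|$ or $\de^{1-\sigma}$) is what saves the day. I would also need to check that differentiating the orthogonality relations and using Proposition \ref{p2}'s differentiability to control $\partial_\de c_0$, $\partial_{(\xi_j)_i}c_{ij}$, etc., does not contribute uncontrolled terms — but since at a critical point all the $c$'s vanish and here we only need the expansion of $E_\lambda$ itself (not yet its critical points), these terms enter only through $\|\phi\|_\infty$ and its derivatives and are already accounted for.
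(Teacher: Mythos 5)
Your proposal is correct and follows essentially the same route as the paper: a second-order Taylor expansion of $J_\lambda$ around $W$, conversion of the linear and quadratic terms via the projected equation \eqref{pnlabis} and the orthogonality conditions into $\int_S R\,\phi\,dv_g$ and $\int_S N(\phi)\phi\,dv_g$, estimated through Lemma \ref{estrr0} and \eqref{cotaphi1bis}, followed by direct differentiation in $\xi$ and $\de$ using \eqref{cotadphi1bis}--\eqref{cotad2phi1bis} and the $\de^{-1}$-loss bound on $\|\partial_\beta R\|_*$. This matches the paper's Appendix B argument, including the bookkeeping that yields the weights $\de/|\log\de|$ and $\de^2/|\log\de|^2$ in \eqref{rlambda}.
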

\noindent We are now in position to establish the main result stated in the Introduction.
\begin{proof}[{\bf Proof (of Theorem \ref{main2}):}] According to Lemma \ref{cpfc0bis}, we just need to find a critical point of $E_\lambda(\de,\xi)$. By Theorem \ref{fullexpansionenergy} for $\lambda > 8\pi m$ we have that
\begin{eqnarray*}
\frac{(\de\, \partial_\de E_\lambda)(\sqrt{\lambda-8\pi m}\, \mu,\xi)}{\lambda-8\pi m} &=& 2+ A(\xi) \log (\lambda-8\pi m) \mu^2+2 A(\xi) \mu^2 \log \mu+(A(\xi)-2B(\xi))\mu^2\\
&&+o(1)+O \left(\mu^2  |\log (\sqrt{ \lambda-8\pi m}\mu)|^2 |\nabla \varphi_m(\xi)|_g^2\right)
\end{eqnarray*}
and
\begin{eqnarray*}
\frac{(\de^2\, \partial_{\de \de} E_\lambda)(\sqrt{\lambda-8\pi m} \, \mu,\xi)}{\lambda-8\pi m } &=& -2+A(\xi) \mu^2 \left[ 2 \log \mu+ \log (\lambda-8\pi m) +3 \right]-2B(\xi) \mu^2\\
&&+o(1)+O \left( \mu^2 |\log (\sqrt{ \lambda-8\pi m}\mu)|^3 |\nabla \varphi_m
(\xi)|_g^2\right)
\end{eqnarray*}
as $\lambda \to 8\pi m$. By assumption we can find $a_0>0$ small so that $B(\xi)>0$ for all $\xi \in U$ with $|A(\xi)| \leq a_0$. Let
$$\DD_\lambda=\{\xi \in U:\, |\nabla \varphi_m(\xi)|_g \leq 	\sqrt 2 \;|\log(\lambda-8\pi m )|^{-3}\}$$
and consider the interval
$$I_\lambda =\Big[\frac{m_0}{\sqrt{|\log(\lambda-8\pi m)|}},M \Big],$$
where
$$0<m_0 =\inf_{\xi \in U} |A(\xi)|^{-\frac{1}{2}}\:,\qquad M=2 \sup_{\{\xi \in U: \, |A(\xi)| \leq a_0 \} } B^{-\frac{1}{2}}(\xi)<+\infty.$$
For $\lambda$ close to $8\pi m$ and for all $\xi \in \DD_\lambda$ we have that
$$\frac{(\de\, \partial_\de E_\lambda)(\sqrt{\lambda-8\pi m}\, \mu,\xi)}{\lambda-8\pi m} \Big|_{\mu=\frac{m_0}{\sqrt{|\log(\lambda-8\pi m)|}}}
=2-A(\xi) m_0^2(1+o(1))-2B(\xi)\frac{m_0^2}{|\log(\lambda-8\pi m)|}+o(1)>0$$
in view of $A(\xi)m_0^2\leq 1$, and
$$\frac{(\de\, \partial_\de E_\lambda)(\sqrt{\lambda-8\pi m}\, \mu,\xi)}{\lambda-8\pi m} \Big|_{\mu=M}=
2-A(\xi)M^2 |\log(\lambda-8\pi m)| (1+o(1))-2B(\xi)M^2+o(1)<0$$
since either $A(\xi)\geq a_0$ or $0\le A(\xi)\leq a_0$, $B(\xi) M^2\geq 4$. Moreover, in $I_\lambda \times \DD_\lambda$ we have that
$$\frac{(\de^2\, \partial_{\de \de} E_\lambda)(\sqrt{\lambda-8\pi m} \, \mu,\xi)}{\lambda-8\pi m }=
-2-A(\xi) \mu^2 |\log (\lambda-8\pi m)|(1+o(1))-2B(\xi) \mu^2+o(1)
\leq -1$$
since either $A(\xi)\geq a_0$ or $0\le A(\xi)\leq a_0$, $B(\xi)>0$.
So, for all $\lambda$ close to $8\pi m$ and $\xi \in \DD_\la$ there exists an unique $\mu(\lambda,\xi)
\in \hbox{Int }I_\lambda$ so that $\de(\lambda,\xi):= \sqrt{\lambda-8\pi
m} \, \mu(\lambda,\xi)$ satisfies $\partial_\de
E_\lambda(\de(\lambda,\xi),\xi)=0$. Moreover, by the IFT the map
$\xi \in  \DD_\la \to \de(\lambda,\xi)$ is a $C^1-$function of
$\xi$ with
\begin{eqnarray*}
\partial_\xi \de(\lambda,\xi)=-\frac{\partial_{\de \xi} E_\lambda(\de(\la,\xi),\xi)}{\partial_{\de\de}E_\la(\de(\la,\xi),\xi)}
=O\big( |\log(\lambda-8\pi m)|^{-3} \big),
\end{eqnarray*}
in view of $\mu^2(\la,\xi) |\partial_{\de \de} E_\lambda(\sqrt{\lambda-8\pi m} \, \mu(\la,\xi),\xi) |\geq1 $ and $\partial_{\de \xi} E_\lambda(\de,\xi)=O(\de |\log \de|+|\log\de|^3|\grad\varphi_m(\xi)|_g^2)$ (as it can be easily shown by the methods in the proof of Theorem \ref{expansionenergy}).\\
The aim now is to extend the map $\de(\la,\xi)$ to the whole $U$ in a $C^1-$way. Letting $\eta \in C_0^\infty[-2,2]$ be a cut-off function so that $\eta =1$ in $[-1,1]$, we define the $C^1-$extension $\tilde
\delta$ of $\de$ to $\DD$ as
\begin{eqnarray*}
\tilde \delta(\lambda,\xi)&=&\eta\lf(|\log(\lambda-8\pi m )|^6
|\nabla \varphi_m(\xi)|_g^2\rg)   \delta (\lambda,\xi)+\sqrt{\la-8\pi m}\bigg[1-\eta\lf(|\log(\lambda-8\pi m )|^6
|\nabla \varphi_m(\xi)|_g^2\rg) \bigg]
\end{eqnarray*}
and $\tilde E_\lambda (\xi)=E_\lambda(\tilde
\de(\lambda,\xi),\xi)$. Since $|\fr_\xi \tilde
\delta(\lambda,\xi)|=O( |\log(\lambda-8\pi m )|^{-3})$, by Theorem
\ref{fullexpansionenergy} we have that
$$\tilde E_\lambda (\xi)=-8\pi m -\lambda \log (\pi m) -32\pi^2
\varphi_m(\xi)+ O(|\lambda -8\pi m|\, |\log (\lambda-8\pi m )|)$$
 and
$$\nabla_\xi \tilde E_\lambda (\xi)= \nabla_\xi E_\lambda (\tilde\de (\lambda,\xi),\xi)+\partial_\de E_\lambda (\tilde \de(\lambda,\xi),\xi)\partial_\xi \tilde \delta(\lambda,\xi)=-32\pi^2 \nabla \varphi_m(\xi)+ O(\sqrt{\lambda -8\pi m} |\log (\lambda-8\pi m )|^{2})$$
uniformly in $\xi \in U$. Since $\DD$ is a stable critical set of $\varphi_m$ (according to Definition \ref{stable}), we find a
critical point $\xi_\lambda \in U$ of $\tilde E_\lambda (\xi)+8\pi m+ \la \log(\pi m)$, which is also a c.p. of $\tilde E_\lambda (\xi)$. By $\nabla_\xi
\tilde E_\lambda (\xi_\lambda)=0$ we get that
$$\nabla \varphi_m(\xi_\lambda)=O(\sqrt{\lambda -8\pi m} |\log (\lambda-8\pi m )|^{2}),$$
and then $\xi_\lambda \in \DD_\la$. Moreover $\tilde
\de(\lambda,\xi)=\de(\lambda,\xi)$ satisfies $\partial_\de
E_\lambda(\de(\lambda,\xi_\lambda),\xi_\lambda)=0$, and then
$\nabla_\xi \tilde E_\lambda (\xi_\lambda)=0$ is equivalent to
$\nabla_\xi E_\lambda (\de(\lambda,\xi_\lambda),\xi_\lambda)=0$.
In conclusion, up to take $U$ smaller so that $\grad\varphi_m(\xi)\ne 0$ for all $\xi\in U\sm\DD$, the pair $(\de(\lambda,\xi_\lambda),\xi_\lambda)$
is a c.p. of $E_\lambda(\de,\xi)$ and, along a sub-sequence, $\xi_\la \to q \in \DD$ as $\la \to 8\pi m$. By construction, the corresponding solution has the required asymptotic properties.
\end{proof}
\begin{obs} \label{minmax} i) The validity of condition (\ref{cond}) just on $\DD$ is enough to provide Theorem \ref{main2} in the case of $\DD=\{\xi_0\}$, where $\xi_0$ is a non-degenerate local minimum/maximum point of $\varphi_m$. In this case, we just consider a small ball $B_{s_\lambda}(\xi_0)$ as $\DD_\lambda$, with $s_\lambda=|\log(\lambda-8\pi m)|^{-3}$. Since $A(\xi_0)\geq 0$ and $\nabla \varphi_m(\xi_0)=0$ we have that $A(\xi)\geq -C_0 s_\lambda$ and $|\nabla \varphi_m(\xi)|_g \leq C_0 s_\lambda$ for all $\xi \in B_{s_\lambda}(\xi_0)$ and some $C_0>0$. Since $B(\xi)>0$ for all $\xi \in B_{s_\lambda}(\xi_0)$  if $A(\xi_0)=0$, it is easy to see as before that for $\lambda$ close to $8\pi m$ and for all $\xi \in B_{s_\lambda}(\xi_0)$
$$\partial_\de E_\lambda(\sqrt{\lambda-8\pi m}\, \mu,\xi) \Big|_{\mu=\frac{m_0}{\sqrt{|\log(\lambda-8\pi m)|}}}>0,\:\:\:\:\: \partial_\de E_\lambda(\sqrt{\lambda-8\pi m}\, \mu,\xi) \Big|_{\mu=M}<0$$
with
$$\partial_{\de \de} E_\lambda(\sqrt{\lambda-8\pi m} \, \mu,\xi)\leq -\frac{1}{\mu^2}$$
in $I_\lambda \times B_{s_\la}(\xi_0)$. So, for all $\lambda$ close to $8\pi m$ we can still find a $C^1-$map
$\xi \in  B_{s_\lambda}(\xi_0) \to \de(\lambda,\xi)$ so that $\partial_\de E_\lambda(\de(\lambda,\xi),\xi)=0$.
Setting $\tilde E_\lambda (\xi)=E_\lambda(\de(\lambda,\xi),\xi)$ for $\xi \in B_{s_\lambda}(\xi_0)$, by Theorem \ref{fullexpansionenergy} we have that
$$\tilde E_\lambda (\xi)=-8\pi m -\lambda \log (\pi m) -32\pi^2
\varphi_m(\xi)+ O(|\la-8 \pi m|\,|\log(\la-8\pi m)|).$$
Since by the non-degeneracy of $\xi_0$ we have on $\partial B_{s_\lambda}(\xi_0)$ that
$\varphi_m(\xi)\geq \varphi_m(\xi_0)+ C_1 s_\la^2$ / $\varphi_m(\xi)  \leq  \varphi_m(\xi_0)- C_1 s_\la^2$ for some $C_1>0$, we can find an interior minimum/maximum point $\xi_\lambda \in  B_{s_\lambda}(\xi_0)$ of $\tilde E_\lambda (\xi)$ on $ B_{s_\lambda}(\xi_0)$. By $\partial_\de
E_\lambda(\de(\lambda,\xi_\lambda),\xi_\lambda)=0$, we also deduce that $\nabla_\xi E_\lambda (\de(\lambda,\xi_\lambda),\xi_\lambda)=0$, and the pair $(\de(\lambda,\xi_\lambda),\xi_\lambda)$
is the c.p. of $E_\lambda(\de,\xi)$ we were searching for.\\[0.2cm]
ii) If (\ref{cond}) does hold just in $\DD$, Theorem \ref{main2} is also valid in the special case $|A(\xi)|=O(|\nabla \varphi_m(\xi)|_g)$. Indeed, condition (\ref{cond}) reduces to $B(\xi)>0$ ($<0$) on $\DD$ and in $\DD_\la$ we have that $A(\xi)\geq -C_0 |\log(\lambda-8\pi m )|^{-3}$ for some $C_0>0$. Similarly as in point (i), it is still possible to define the map $\xi \in \DD_\lambda \to \delta(\lambda,\xi)$, and the remaining argument in the proof of Theorem \ref{main2} works also in this case by extending $\delta(\lambda,\xi)$ on a small neighborhood $U$ of $\DD$ in $\tilde S^m\setminus \Delta$. \end{obs}


\section{Proof of Theorem \ref{main3}}
\noindent In this section, we shall study the existence of
non-topological solutions of \eqref{CSoriginal}. To this
purpose we look for a solution to the equivalent problem
\eqref{CS} of the form $w=u+c_-(u)$ with $\int_Tu=0$ and we are
lead to study \eqref{CSMF}. Assume that $N$ is even, so that equation \eqref{CSMF} is a perturbation of (\ref{mfeot})$_ {\la=8\pi m}$ with $m={N\over 2}$. Notice that the energy
functional of \eqref{CS} is given by
$$\ti I_\e(w)={1\over 2}\int_T|\grad w|^2+{1\over
2\e^2}\int_T(ke^{w}-1)^2+{4\pi N\over |T|}\int_Tw, \qquad w\in
H^1(T).$$ Introduce the notation $\ds C(u):=16\pi N {\int_T k^2
e^{2u}\over (\int_Tke^{u})^2}$, so that $\ds e^{c_-(u)}=\frac{8\pi
N \epsilon^2}{\int_T k e^u \big(1+\sqrt{1-\e^2C(u)}\big)}$ and
\begin{equation}\label{iuc}
\begin{split}
I_\e(u):=\ti I_\e(u+c_-(u))=&\,J_{4\pi N}(u)-4\pi
N\log\Big(1+\sqrt{1-\e^2C(u)}\Big)-{4\pi N\over
1+\sqrt{1-\e^2C(u)}}\\
&\,+\;4\pi N\log(8\pi N\e^2)+{|T|\over 2\e^2}-2\pi N.
\end{split}
\end{equation}
Hence, if $u\in\ml{A}_\e=\{u\in \bar H\mid \e^2 C(u)\le 1\}$ is
a critical point of $I_\e$ with $\epsilon^2 C(u)<1$, then $u+c_-(u)$ is a solution to
\eqref{CS} and $u$ is a solution to \eqref{CSMF}. Observe that
$I_\e$ is a perturbation of $J_{8\pi m}$ as $\e\to
0^+$, in view of $4\pi N=8\pi m$.

\medskip \noindent Given $m$ distinct points $\xi_j\in T\sm\{p_1,\dots,p_l\}$, $j=1,\dots,m$, we will define
$\de_j$ according to \eqref{repla0} and assume
\begin{equation}\label{repcs1}
\exists\, C>1\,:\,\e\le C \de^2.
\end{equation}
Letting $\ds
W(x)=\sum_{j=1}^mW_j(x)$, we look for a solution of \eqref{CSMF} in the form $u=W+\phi$, for
some small remainder term $\phi$. In terms of $\phi$, problem
\eqref{CSMF} is equivalent to find $\phi\in \bar H$ so that
$W+\phi\in\ml{A}_\e$ and
\begin{equation}\label{ephit}
L^\e(\phi)=-[R^\e+N^\e(\phi)] \qquad\text{ in $T$}.
\end{equation}
The linear operator $L^\e$ is defined as
$$L^\e(\phi) = \Delta \phi + 4\pi N{ke^{W}\over\int_T
ke^{W}}\lf(\phi - {\int_{T} ke^{W}\phi \over\int_T ke^{W}}
\rg)+\Lambda^\epsilon(\phi),$$
where \bebs \Lambda^\epsilon(\phi)=&\,\frac{4\pi
N\e^2C(W)}{(1+\sqrt{1-\e^2C(W)})^2}\bigg({ke^W\over\int_Tke^W}-{2k^2e^{2W}\over\int_Tk^2e^{2W}}\bigg)\bigg[\phi-{\int_T
ke^{W}\phi\over \int_Tke^W}\\
&\,+\frac{\e^2
C(W)}{(1+\sqrt{1-\e^2C(W)})\sqrt{1-\e^2C(W)}}\bigg({\int_T
k^2e^{2W}\phi\over \int_Tk^2e^{2W}}-{\int_Tke^W \phi\over \int_T
ke^{W}}\bigg)\bigg]\\
&+{4\pi Nke^W\over
\int_Tke^W}\frac{\e^2C(W)}{(1+\sqrt{1-\e^2C(W)})\sqrt{1-\e^2C(W)}}\bigg({\int_Tk^2e^{2W}\phi\over\int_Tk^2e^{2W}}-{\int_Tke^W\phi
\over\int_Tke^W}\bigg). \end{split}\ee
Observe that $L^\e$ is defined for all $\phi\in \bar H$. The nonlinear part $N^\e$ is well-defined for $\phi\in
\bar H$ such that $W+\phi\in\ml{A}_\e$ and is given by
\begin{equation}\label{nltcs}
\begin{split}
N^\e(\phi)=&\,4\pi N\left(\frac{k e^{W+\phi}}{\int_T
ke^{W+\phi}}-{ke^{W}\phi\over\int_T ke^{W}}+ {ke^W\int_{T}
ke^{W}\phi \over(\int_T ke^{W})^2}-\frac{k e^W}{\int_T
ke^W}\right)-\Lambda_\epsilon(\phi)\\
&\,+ \frac{4 \pi N \epsilon^2
C(W+\phi)}{\big(1+\sqrt{1-\epsilon^2C(W+\phi)}\big)^2}\left(\frac{ke^{W+\phi}}{\int_T
k e^{W+\phi}}-\frac{k^2
e^{2(W+\phi)}}{\int_T k^2 e^{2(W+\phi)}}\right)\\
&\,- \frac{4 \pi N \epsilon^2
C(W)}{\big(1+\sqrt{1-\epsilon^2C(W)}\big)^2}\left(\frac{ke^W}{\int_T
ke^W}-\frac{k^2 e^{2W}}{\int_T k^2 e^{2W}}\right).
\end{split}
\end{equation}
The approximation rate of $W$ becomes
\begin{equation}\label{Rcs}
\begin{split}
R^\e=&\,\Delta W+4\pi N\left(\frac{k e^W}{\int_T
ke^W}-\frac{1}{|T|}\right)+\frac{4 \pi N \epsilon^2
C(W)}{\big(1+\sqrt{1-\epsilon^2C(W)}\big)^2}\left(\frac{ke^W}{\int_T
k e^W}-\frac{k^2
e^{2W}}{\int_T k^2 e^{2W}}\right).
\end{split}
\end{equation}
We have that
\begin{lem}
Let $N$ be an even number and $m={N\over 2}$. Assume \eqref{repla0} and \equ{repcs1}. There
exists a constant $C>0$, independent of $\de>0$ small, \st for all
$\xi \in \Xi$
\begin{equation}\label{ree}
\|R^\e\|_*\le  C\left(\delta |\nabla \varphi_m(\xi)|_g+\de^{2-\sigma} \right).
\end{equation}
\end{lem}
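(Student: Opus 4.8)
The strategy is to decompose $R^\e$ as $R^\e = R + (R^\e - R)$, where $R$ is the approximation rate for the regular mean-field equation \eqref{mfeot} with $\lambda = 8\pi m = 4\pi N$, for which Lemma \ref{estrr0} already gives $\|R\|_* \le C(\delta|\nabla\varphi_m(\xi)|_g + \delta^{2-\sigma}|\log\delta|)$. Comparing \eqref{Rcs} with \eqref{R}, the difference is exactly the extra term
$$R^\e - R = \frac{4\pi N\epsilon^2 C(W)}{\big(1+\sqrt{1-\epsilon^2 C(W)}\big)^2}\left(\frac{ke^W}{\int_T ke^W} - \frac{k^2 e^{2W}}{\int_T k^2 e^{2W}}\right),$$
so the whole point is to show this perturbative term is of order $O(\delta^{2-\sigma})$ in $\|\cdot\|_*$ (in fact much smaller), and then add the two estimates. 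I note that the statement of \eqref{ree} has $\delta^{2-\sigma}$ without the logarithmic factor; this presumably reflects the fact that here $\lambda$ is frozen exactly at $8\pi m$ (no $R - R_{8\pi m}$ contribution as in the last lines of the proof of Lemma \ref{estrr0}), so only the genuinely $O(\delta^{2-\sigma})$ part of $\|R_{8\pi m}\|_*$ survives, cf. \eqref{R8pim}.

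\textbf{Key steps.} First I would estimate $\epsilon^2 C(W)$. Recall $C(W) = 16\pi N\, \frac{\int_T k^2 e^{2W}}{(\int_T ke^W)^2}$. By Lemma \ref{ewfxi}, $ke^W$ concentrates like $\frac{e^{\alpha_{\de,\xi}}}{8\delta_j^2}\rho_j e^{U_j}$ near each $\xi_j$, so arguing exactly as for \eqref{ikeW} one gets $\int_T ke^W \sim \frac{\pi m}{\delta^2}$, while $\int_T k^2 e^{2W} \sim \frac{c}{\delta^4}\sum_j \rho_j^2(\xi_j)$ for a computable constant $c$ (the integral $\int_{\R^2} e^{2u_0}$ of the square of the bubble). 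Hence $C(W) = O(1)$ and, using \eqref{repcs1}, $\epsilon^2 C(W) = O(\epsilon^2) = O(\delta^4)$, which in particular stays well below $1$ for $\delta$ small so that $\mathcal{A}_\e$ is a non-trivial constraint and all square roots are harmless (the prefactor $\frac{\epsilon^2 C(W)}{(1+\sqrt{1-\epsilon^2 C(W)})^2}$ is $O(\delta^4)$). Next I would bound the two ratios in the parenthesis pointwise against the weight defining $\|\cdot\|_*$: near $\xi_j$, $\frac{ke^W}{\int_T ke^W} = O(\delta^2 e^{U_j})$ and $\frac{k^2 e^{2W}}{\int_T k^2 e^{2W}} = O(\delta^4 e^{2U_j})$, both of which are $O\!\big(\frac{\delta_j^\sigma}{(\delta_j^2 + |y_{\xi_j}(x)|^2)^{1+\sigma/2}}\big)$ times a factor that is already $o(1)$, while away from the $\xi_j$'s everything is $O(\delta^2)$. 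Multiplying by the $O(\delta^4)$ prefactor gives $\|R^\e - R\|_* = O(\delta^{4-\sigma})$ (say), which is absorbed into the $\delta^{2-\sigma}$ term. Finally I combine with Lemma \ref{estrr0}; if one wants to shave the $|\log\delta|$ one re-runs the part of that proof giving \eqref{R8pim} but with $\lambda = 8\pi m$ exactly, so that the $O(\delta^{-\sigma}|\lambda - 8\pi m|)$ contribution vanishes identically and the clean bound $\|R\|_* \le C(\delta|\nabla\varphi_m(\xi)|_g + \delta^{2-\sigma})$ holds.

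\textbf{Main obstacle.} The only genuinely delicate point is the estimate of $\int_T k^2 e^{2W}$ and hence of $C(W)$: one must verify that the $C^0$-closeness provided by Lemma \ref{ewfxi} is enough to control the squared, more singular integrand, and that the constant $c = \int_{\R^2} e^{2u_0}\, = \int_{\R^2}\frac{64}{(1+|y|^2)^4}dy$ is finite (it is), so that $C(W)$ is bounded above \emph{and} below uniformly in $\xi \in \Xi$ and $\delta$ small. Everything else is a routine pointwise comparison against the $\|\cdot\|_*$-weight, entirely parallel to the estimates \eqref{important}--\eqref{R8pim} in the proof of Lemma \ref{estrr0}, now with an extra gain of $\epsilon^2 = O(\delta^4)$ coming from \eqref{repcs1}.
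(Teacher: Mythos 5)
Your overall strategy is exactly the one the paper follows: write $R^\e=R_{8\pi m}+\frac{8\pi m\,\e^2C(W)}{(1+\sqrt{1-\e^2C(W)})^2}\big(\frac{ke^W}{\int_Tke^W}-\frac{k^2e^{2W}}{\int_Tk^2e^{2W}}\big)$, estimate $C(W)$, bound the two ratios pointwise against the $\|\cdot\|_*$-weight, and invoke \eqref{R8pim} (with $\lambda$ frozen at $8\pi m$, so no logarithm) for the main part. Your observation about why no $|\log\de|$ appears is correct. However, the one step you yourself single out as the delicate one is computed incorrectly. Squaring the bubble changes the scaling: near $\xi_j$ one has $k^2e^{2W}\approx\frac{\rho_j^2}{64\de_j^4}e^{2U_j}$ and $\int e^{2U_j}\,dx=\frac{64\pi}{3}\de_j^{-2}$, so
$$\int_Tk^2e^{2W}\,dx=\frac{\pi}{3\de^6}\sum_{j=1}^m\frac{1}{\rho_j(\xi_j)}\big(1+O(\de^2|\log\de|)\big),$$
of order $\de^{-6}$, not $\de^{-4}\sum_j\rho_j^2(\xi_j)$ as you claim (you have in effect treated $\int e^{2U_j}$ as $O(1)$, and the $\rho_j$-dependence also comes out wrong because of \eqref{repla0}). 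Consequently $C(W)=\frac{32}{3m\de^2}\sum_j\rho_j(\xi_j)^{-1}$ is of order $\de^{-2}$, not $O(1)$ (this is \eqref{cw} in the paper), and with \eqref{repcs1} the prefactor is $O(\e^2/\de^2)=O(\de^2)$, not $O(\de^4)$. Your pointwise bounds carry the same error: near $\xi_j$ one has $\frac{ke^W}{\int_Tke^W}=O(e^{U_j})$ (not $O(\de^2e^{U_j})$) and $\frac{k^2e^{2W}}{\int_Tk^2e^{2W}}=O(\de^2e^{2U_j})=O(e^{U_j})$, while away from the points the second ratio is $O(\de^6)$.

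The good news is that the argument still closes once the bookkeeping is corrected: since $\|\chi_je^{U_j}\|_*=O(1)$, the perturbative term satisfies $\|R^\e-R_{8\pi m}\|_*=O(\e^2\de^{-2})=O(\de^2)$ by \eqref{repcs1}, which is absorbed into the $\de^{2-\sigma}$ term, and combining with \eqref{R8pim} gives \eqref{ree}. So your plan is sound and coincides with the paper's proof, but as written the central estimate of $\int_Tk^2e^{2W}$ and hence of $C(W)$ is off by a factor $\de^{-2}$ (together with the wrong $\rho_j$-weights), and the claimed margins $O(\de^4)$ and $O(\de^{4-\sigma})$ are not the true ones; you happen to survive only because the correct margin $O(\e^2/\de^2)$ is still small enough. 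Note also that the correct expansion of $\int_Tk^2e^{2W}$ is not a throwaway: it is reused later (via \eqref{cw}) in the expansion of $I_\e(W)$, so getting its order and its $\sum_j\rho_j(\xi_j)^{-1}$ structure right matters beyond this lemma.
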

\begin{proof}[\dem] First, note that $\ds R^\e=R_{8\pi m} + {8\pi m\e^2C(W)\over
\big(1+\sqrt{1-\e^2C(W)}\big)^2}\lf({ke^W\over
\int_Tke^W}-{k^2e^{2W}\over \int_Tk^2e^{2W}}\rg)$ in view of
$N=2m$. As in (\ref{ikeW}) we have that
\begin{equation*}
\begin{split}
\int_T k^2 e^{2W}\,dx&=\sum_{j=1}^m{1\over
64\de_j^4}\int_{B_{r_0}(\xi_j)}\rho_j^2(x)e^{2U_j}(1+O(\de^2|\log\de|))\,dx+O(1)\\
&=\sum_{j=1}^m{1\over 64\de_j^4}\lf({64\pi \rho_j^2(\xi_j)\over
3\de_j^2}+O(|\log\de|)\rg)+O(1)={\pi\over3\de^6}\sum_{j=1}^m
{1\over \rho_j(\xi_j)}\lf(1+O(\de^2|\log\de|)\rg).
\end{split}
\end{equation*}
Hence, in $T \setminus \cup_{j=1}^m B_{r_0}(\xi_j)$ there holds
${k^2e^{2W}\over \int_Tk^2e^{2W}}=O(\de^6)$  in view of
$W(x)=O(1)$, and in $B_{r_0}(\xi_j)$, $j\in\{1,\dots,m\}$, there
holds
\begin{eqnarray*}
{k^2e^{2W}\over \int_T k^2e^{2W}}
&=&\frac{3\de^2[ \rho_j^2(x)+O(\de^2|\log \de|)]}
{64\pi\rho_j^2(\xi_j)\sum_{l=1}^m[\rho_l(\xi_l)]^{-1}(1 +
O(\de^2|\log\de|))}e^{2U_j}=O(\de^2 e^{U_j}),
\end{eqnarray*}
which summarize as follows: $ {k^2e^{2W}\over
\int_Tk^2e^{2W}}=O\Big(\de^2 \sum_{j=1}^m \chi_j e^{U_j}+\de^6 \chi_{T
\setminus \cup_{j=1}^m B_{r_0}(\xi_j)}\Big).$  On the other hand, from \eqref{ikeW} we get that
\begin{equation}\label{cw}
C(W)=16\pi N{{\pi\over 3\de^6}\sum_{j=1}^m{1\over
\rho_j(\xi_j)}(1+O(\delta^2|\log\de|))\over [{\pi m\over
\de^2}+O(|\log\de|)]^2}={32\over 3m\de^2}\sum_{j=1}^m{1\over
\rho_j(\xi_j)}\lf(1+O(\de^2|\log\de|)\rg), \end{equation} which
implies by (\ref{repcs1}) that for $\e$ and $\de$ sufficiently small $W\in\ml{A}_\e$
and
$${8\pi m\e^2C(W)\over
\big(1+\sqrt{1-\e^2C(W)}\big)^2}=2\pi
m\e^2C(W)+O([\e^2C(W)]^2)=O\Big({\e^2\over\de^2}\Big).$$
Therefore, by using \eqref{important} and the estimate on ${k^2e^{2W}\over \int_Tk^2e^{2W}}$ we find the following
estimate
$$\ds R^\e-R_{8\pi
m}=O\bigg({\e^2\over\de^2}\Big[\sum_{j=1}^m\chi_je^{U_j}+\de^2\Big]\bigg),$$
and then $\|R^\e-R_{8\pi m}\|_*=O(\e^2\de^{-2})$. Thus,
in view of \eqref{R8pim} and \eqref{repcs1} the conclusion
follows.
\end{proof}

\medskip \noindent Now, we are going to establish the expansion of $I_\e(W)$.
\begin{theo} \label{energyexpansion}
Assume \eqref{repla0} and \eqref{repcs1}. The following expansion
does hold
\begin{equation} \label{IUt}
I_\e (W)=-16\pi m +8\pi m\log (8\e^2)+{|T|\over 2\e^2} -32\pi^2
\varphi_m(\xi)+A(\xi)\de^2\log\de-B(\xi)\delta^2+\ti
B(\xi){\e^2\over \de^2}+o(\de^2)
\end{equation}
in $C^2(\mathbb{R})$ and $C^1(\Xi)$ as $\de\to 0^+$, where
$\varphi_m(\xi)$, $A(\xi)$ and $B(\xi)$ are given by \eqref{fim},
\eqref{v} and \eqref{B}, respectively, and
\begin{equation}\label{Bti}
\ti B(\xi)={32\pi \over 3}\sum_{j=1}^m{1\over\rho_j(\xi_j)}.
\end{equation}
\end{theo}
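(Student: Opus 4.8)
The plan is to treat $I_\e(W)$ directly from the closed formula \eqref{iuc} evaluated at $u=W$, regarding it as a perturbation of $J_{8\pi m}(W)$, whose expansion is already provided by Theorem \ref{expansionenergy} (in the form \eqref{energyMF}, since $\lambda=4\pi N=8\pi m$ when $N=2m$). The only genuinely new work concerns the terms depending on $C(W)$. By \eqref{cw} and the constraint \eqref{repcs1} one has
$$\e^2 C(W)=\frac{32}{3m}\Big(\sum_{j=1}^m \frac{1}{\rho_j(\xi_j)}\Big)\frac{\e^2}{\de^2}\big(1+O(\de^2|\log\de|)\big)=O(\e^2/\de^2)=O(\de^2),$$
so $\e^2 C(W)$ is small and every function of $\e^2C(W)$ occurring in \eqref{iuc} can be Taylor expanded about $0$.

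First I would expand, for $t=\e^2C(W)\to 0$,
$$-8\pi m\log\big(1+\sqrt{1-t}\big)=-8\pi m\log 2+2\pi m\,t+O(t^2),\qquad -\frac{8\pi m}{1+\sqrt{1-t}}=-4\pi m-\pi m\,t+O(t^2).$$
Hence the two $C(W)$-dependent terms in \eqref{iuc} contribute, apart from the constants $-8\pi m\log2-4\pi m$, exactly $\pi m\,\e^2 C(W)+O((\e^2C(W))^2)$; plugging in the expansion of $\e^2C(W)$ gives $\pi m\,\e^2C(W)=\tilde B(\xi)\frac{\e^2}{\de^2}+O(\e^2|\log\de|)=\tilde B(\xi)\frac{\e^2}{\de^2}+o(\de^2)$ with $\tilde B(\xi)$ as in \eqref{Bti}, while $O((\e^2C(W))^2)=O(\e^4/\de^4)=O(\de^4)=o(\de^2)$. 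The remaining terms $4\pi N\log(8\pi N\e^2)=8\pi m\log(16\pi m)+8\pi m\log\e^2$, $\frac{|T|}{2\e^2}$ and $-2\pi N=-4\pi m$ are explicit; collecting all $\xi$- and $\e$-independent constants together with those in \eqref{energyMF} and using $\log(\pi m)+\log 2-\log(16\pi m)=-\log 8$ gives $-16\pi m+8\pi m\log(8\e^2)+\frac{|T|}{2\e^2}$. Together with the terms $-32\pi^2\varphi_m(\xi)+A(\xi)\de^2\log\de-B(\xi)\de^2$ inherited from $J_{8\pi m}(W)$ this yields \eqref{IUt} in $C(\mathbb{R}\times\Xi)$.

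For the $C^2$-statement in $\de$ and the $C^1$-statement in $\xi$ I would differentiate this identity. The derivatives of $J_{8\pi m}(W)$ are controlled by Theorem \ref{expansionenergy}, and since the functions $t\mapsto\log(1+\sqrt{1-t})$ and $t\mapsto(1+\sqrt{1-t})^{-1}$ are smooth with bounded derivatives near $t=0$, it remains to estimate $\fr_\de$, $\fr_{\de\de}$ and $\fr_{(\xi_j)_i}$ of $\e^2C(W)=16\pi N\,\e^2\frac{\int_T k^2e^{2W}}{(\int_T ke^W)^2}$. The needed derivative expansions of $\int_T ke^W$ and $\int_T k^2e^{2W}$ follow from Lemma \ref{ieuf} exactly as in the proof of Theorem \ref{expansionenergy} (in the second integral the factor $e^{2U_j}$ only improves the decay), giving $C(W)=\frac{c(\xi)}{\de^2}(1+O(\de^2|\log\de|))$ with $c(\xi)=\frac{32}{3m}\sum_j\rho_j(\xi_j)^{-1}$, hence $\fr_\de C(W)=O(\de^{-3})$, $\fr_{\de\de}C(W)=O(\de^{-4})$ and $\fr_{(\xi_j)_i}C(W)=O(\de^{-2})$ up to lower-order terms; multiplying by $\e^2=O(\de^4)$ shows that all contributions of $\e^2C(W)$ to the derivatives stay within the asserted error, the leading ones being $\fr_\de[\tilde B(\xi)\e^2/\de^2]$ and $\fr_{(\xi_j)_i}[\tilde B(\xi)\e^2/\de^2]$, which are kept explicitly in \eqref{IUt}.

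The main obstacle is precisely this last point: producing the second $\de$-derivative of $\e^2C(W)$ with a remainder that is $o(1)$, which forces one to push the expansions of $\int_T ke^W$ and $\int_T k^2e^{2W}$ one order beyond their leading terms, uniformly for $\xi\in\Xi$. By contrast, the bookkeeping of the constant terms, though somewhat lengthy, is entirely routine.
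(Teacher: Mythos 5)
Your proposal is correct and follows essentially the same route as the paper: starting from \eqref{iuc}, Taylor expanding in $\e^2C(W)$ to isolate the contribution $\pi m\,\e^2C(W)$, inserting \eqref{cw} and the expansion \eqref{energyMF} of $J_{8\pi m}(W)$, and then differentiating with the aid of expansions for the derivatives of $C(W)$. The paper is in fact just as terse as you are on the last point (it only invokes ``the expansions for the derivatives of $C(W)$ in the line of \eqref{cw}''), so your identification of this as the only genuinely delicate step matches its treatment.
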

\begin{proof}[\dem] By \eqref{cw} we have that
$$\ds\big[1+\sqrt{1-\e^2C(W)}\big]^{-1}={1\over 2}+{\e^2\over
8}C(W)+O\Big(\frac{\e^4}{\delta^4}\Big)\,,\quad \log\Big(1+\sqrt{1-\e^2C(W)}\Big)=\log 2-{\e^2\over 4}C(W)+O\Big(\frac{\e^4}{\delta^4})\Big).$$
Hence, by using \eqref{iuc} we find that
$$I_\e(W)=J_{8\pi m}(W)+\pi m\e^2C(W)+8\pi m\log(8\pi
m\e^2)+{|T|\over 2\e^2}-8\pi m+O(\e^4\de^{-4}).$$
Thus, the expansion \eqref{IUt} follows by \eqref{energyMF},
\eqref{repcs1} and $C(W)=[\pi m\de^2]^{-1}\ti
B(\xi)[1+O(\de^2|\log\de|)]$ in view of (\ref{cw}). Finally, the expansions for the
derivatives follow similarly as in the proof of Theorem
\ref{expansionenergy}, in view of
\begin{equation*}
\begin{split}
\fr_\beta[I_\e(W)]&=\fr_\beta[J_{8\pi m}(W)]+\frac{4\pi
m\e^2\fr_\beta[C(W)]}{(1+\sqrt{1-\e^2C(W)})^2}
\\
&=\fr_\beta[J_{8\pi m}(W)]+\pi
m\e^2\fr_\beta[C(W)]+O(\e^4 C(W)|\fr_\beta[C(W)]|)
\end{split}
\end{equation*}
for either $\beta=(\xi_j)_i$ or $\beta=\de$, and
\begin{equation*}
\fr_{\de \de}[I_\e(W)]=\fr_{\de \de}[J_{8\pi m}(W)]+\pi
m\e^2\fr_{\de \de}[C(W)]+O(\e^4C(W)|\fr_{\de \de}[C(W)]| +\e^4 |\fr_\de[C(W)]|^2),
\end{equation*}
by using
\eqref{derivativexiMF},
\eqref{derivdelta}, \eqref{derivdelta2} and the expansions for the derivatives of $C(W)$ in the line of \eqref{cw}.
\end{proof}

\medskip \noindent Since $L^\e$ and $N^\e$ are small perturbations of $L_{8\pi m}$
and $N_{8\pi m}$ in view of
$\|\Lambda^\epsilon(\phi)\|_*=O\Big(\ds{\e^2\over\de^2}\|\phi\|_\infty\Big)$
and $N^\e(\phi)=N_{8\pi
m}(\phi)+O\Big(\ds{\e^2\over\de^2}\|\phi\|_\infty^2\Big)$,
as for Proposition \ref{lpnlabis}, in view of \eqref{ree} it
follows
\begin{prop}\label{lpnlacs}
There exists $\delta_0>0$ so that for all $0<\delta\leq \delta_0$,
$\xi \in \Xi$ problem
\begin{equation*}
\left\{ \begin{array}{ll} L^\e(\phi)= -[R^\e+N^\e(\phi)]
+c_0\Delta PZ +\displaystyle \sum_{i=1}^{2}\sum_{j=1}^m c_{ij}
\Delta
PZ_{ij}& \text{in } T\\
\int_T \phi \Delta PZ=\int_T \phi \Delta PZ_{ij}= 0 &\forall\:
i=1,2,\, j=1,\dots,m
\end{array} \right.
\end{equation*}
admits a unique solution $\phi(\de,\xi) \in \bar H \cap
W^{2,2}(T)$ and $c_0(\de,\xi),\,c_{ij}(\de,\xi) \in \R$, $i=1,2$
and $j=1,\dots,m$, where $\de_j>0$ are as in \eqref{repla0} and
$N^\e$, $R^\e$ are given by \eqref{nltcs}, \eqref{Rcs}. Moreover,
the map $(\delta,\xi)\mapsto
(\phi(\delta,\xi),c_0(\de,\xi),c_{ij}(\de,\xi))$ is
twice-differentiable in $\de$ and one-differentiable in $\xi$ with
$$\|\phi\|_\infty+{\de\over |\log\de|}\lf[\|\fr_\de \phi \|_\infty+\sum_{i,j} \|\fr_{(\xi_j)_i} \phi \|_\infty
+{\de\|\fr_{\de\de}\phi\|_\infty\over |\log\de|}\rg] \le
C|\log\de|\left( \delta |\nabla \varphi_m(\xi)|_g+
\delta^{2-\sigma}\right).$$
\end{prop}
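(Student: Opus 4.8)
\emph{Proof strategy.} The plan is to carry out the same nonlinear Lyapunov--Schmidt reduction used for Proposition \ref{lpnlabis} in Appendix B, exploiting that $L^\e$ and $N^\e$ are small perturbations of $L$ and $N$ (with $\lambda=8\pi m$) under the hypothesis $\e\le C\de^2$ of \eqref{repcs1}. First I would upgrade the linear theory of Proposition \ref{p2} to $L^\e$: by \eqref{cw} one has $\e^2C(W)=O(\e^2\de^{-2})=O(\de^2)$, and inspecting the explicit form of $\Lambda^\e$ shows that $\|\Lambda^\e(\phi)\|_*=O(\e^2\de^{-2}\|\phi\|_\infty)=O(\de^2\|\phi\|_\infty)$ uniformly for $\xi\in\Xi$; combined with the a priori bound $\|\phi\|_\infty\le C|\log\de|\,\|h\|_*$ of Proposition \ref{p2}, a Neumann--series argument gives, for $\de$ small, the exact analogue of Proposition \ref{p2} with $L$ replaced by $L^\e$, with the same estimates \eqref{estmfe1}--\eqref{estd} and the same differentiability in $(\de,\xi)$. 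Write $\phi=\mathcal{T}^\e(h)$ for the associated solution operator, together with multipliers $(c_0,c_{ij})$ satisfying $|c_0|+\sum_{i,j}|c_{ij}|\le C\|h\|_*$.

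Next I would solve the projected version of \eqref{ephit} (i.e.\ the system in the statement) by a contraction mapping argument on the ball $\mathcal{B}_\de=\{\phi\in\bar H:\|\phi\|_\infty\le C_0|\log\de|(\de|\grad\varphi_m(\xi)|_g+\de^{2-\sigma})\}$, for a large constant $C_0$. One first checks that $\mathcal{B}_\de$ lies in the region where $N^\e$ is defined: for $\phi\in\mathcal{B}_\de$ one has $e^{\pm\phi}=1+O(\|\phi\|_\infty)$, hence $\e^2C(W+\phi)=\e^2C(W)(1+O(\|\phi\|_\infty))=O(\e^2\de^{-2})$, so $W+\phi\in\mathcal{A}_\e$ with $\e^2C(W+\phi)$ bounded away from $1$ for $\de$ small. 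The problem is then equivalent to the fixed-point equation $\phi=\mathcal{F}^\e(\phi):=-\mathcal{T}^\e(R^\e+N^\e(\phi))$; using the bound $\|R^\e\|_*\le C(\de|\grad\varphi_m(\xi)|_g+\de^{2-\sigma})$ of \eqref{ree} (which, unlike \eqref{re}, carries \emph{no} extra $|\log\de|$), the quadratic estimates $\|N^\e(\phi)\|_*=O(\de^{-\sigma}\|\phi\|_\infty^2)$ and $\|N^\e(\phi_1)-N^\e(\phi_2)\|_*=o(1)\|\phi_1-\phi_2\|_\infty$ on $\mathcal{B}_\de$ (which follow from \eqref{nltcs}, the analogous estimates for $N$ in Appendix B, and the $O(\e^2\de^{-2})$ corrections), and $\|\mathcal{T}^\e(h)\|_\infty\le C|\log\de|\,\|h\|_*$, one shows that $\mathcal{F}^\e$ maps $\mathcal{B}_\de$ into itself and is a contraction for $\de$ small. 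This yields a unique $\phi(\de,\xi)\in\mathcal{B}_\de$ and, reading off $(c_0,c_{ij})$ from $\mathcal{T}^\e$ applied to $R^\e+N^\e(\phi)$, the desired solution together with the stated bound on $\|\phi\|_\infty$.

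For the differentiability in $(\de,\xi)$ and the derivative bounds I would apply the implicit function theorem to the map $(\de,\xi,\phi,c_0,c_{ij})\mapsto(L^\e(\phi)+R^\e+N^\e(\phi)-c_0\Delta PZ-\sum_{i,j}c_{ij}\Delta PZ_{ij},\ \text{orthogonality conditions})$, whose partial differential in $(\phi,c_0,c_{ij})$ is invertible by the modified linear theory (the term $DN^\e$ being negligible on $\mathcal{B}_\de$). Differentiating the fixed-point identity in $\de$ and in $(\xi_j)_i$, using the weighted derivative estimates \eqref{estd} for $\mathcal{T}^\e$, the derivative bounds for $R^\e$ obtained exactly as in Lemma \ref{estrr0}, and those for $N^\e$, and bootstrapping (first the $\grad_\xi$-derivatives and $\fr_\de$, then $\fr_{\de\de}$), one reaches the asserted bound on $\frac{\de}{|\log\de|}[\|\fr_\de\phi\|_\infty+\sum_{i,j}\|\fr_{(\xi_j)_i}\phi\|_\infty+\frac{\de}{|\log\de|}\|\fr_{\de\de}\phi\|_\infty]$, with one fewer power of $|\log\de|$ than in \eqref{cotadphi1bis}--\eqref{cotad2phi1bis} precisely because $\|R^\e\|_*$ carries no logarithm. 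The main obstacle in all of this is the second step: one must guarantee that the whole iteration stays inside the admissible set $\mathcal{A}_\e$, and this is exactly where \eqref{repcs1} is essential, since it forces $\e^2C(W+\phi)$ to be uniformly small; note also that differentiating the $\e$-dependent terms in $\de$ produces only harmless factors $\e^2\de^{-3}=O(\de)$.
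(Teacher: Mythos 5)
Your proposal is correct and follows essentially the same route as the paper: the authors also treat $L^\e$ and $N^\e$ as $O(\e^2\de^{-2})$-perturbations of the operators $L_{8\pi m}$ and $N_{8\pi m}$ (using \eqref{repcs1} so that $\e^2\de^{-2}=O(\de^2)$), and then repeat verbatim the fixed-point and implicit-function arguments of Proposition \ref{lpnlabis}, with \eqref{ree} replacing Lemma \ref{estrr0} and thereby accounting for the missing power of $|\log\de|$. Your additional checks (membership of $W+\phi$ in $\ml{A}_\e$, and the harmless $\de$-derivatives of the $\e$-dependent terms) match the remark the paper records right after the proposition.
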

\begin{obs}
Notice that if $\|\phi\|_\infty\le \nu\de|\log\de|$ then
$W+\phi\in\ml{A}_\e$ for $\de$ and $\e$ small enough.
\end{obs}
\noindent The function $\phi(\de,\xi)$ will be a solution
 to \eqref{ephit}, namely, $W+\phi(\de,\xi)$ will be a true solution of \eqref{CSMF} if
$\delta$ and $\xi$ are such that
$c_0(\de,\xi)=c_{ij}(\delta,\xi)=0$ for all $i=1,2,$ and
$j=1,\dots,m$. Similarly to Lemma \ref{cpfc0bis}, this problem is
equivalent to finding critical points of the reduced energy
$\ml{E}^\e(\delta, \xi)=I_\e\big(W+\phi(\de,\xi)\big)$.

\begin{theo} \label{fullexpansionenergycs}
Assume \eqref{repla0} and \eqref{repcs1}. The following expansion
does hold
$$\ml{E}^\e(\de,\xi)=-16\pi m +8\pi m\log
(8\e^2)+{|T|\over 2\e^2} -32\pi^2
\varphi_m(\xi)+A(\xi)\de^2\log\de-B(\xi)\delta^2+\ti
B(\xi){\e^2\over \de^2}+o(\de^2)+r^\e(\de,\xi)$$ in
$C^2(\mathbb{R})$ and $C^1(\Xi)$ as $\de\to 0^+$, where
$\varphi_m(\xi)$, $A(\xi)$, $B(\xi)$ and $\ti B(\xi)$ are given by
\eqref{fim}, \eqref{v}, \eqref{B} and \eqref{Bti}, respectively.
The term $r^\e(\de,\xi)$ satisfies for some $C>0$ independent of
$(\de,\xi)$
\begin{eqnarray*}
|r^\e(\de,\xi)|+\frac{\de}{|\log \de|} |\nabla r^\e(\de,\xi)|
+\frac{\de^2}{|\log \de|^2}|\partial_{\de \de} r^\e(\de,\xi)| \leq
C \delta^2 |\log \delta | |\nabla \varphi_m (\xi)|_g^2.
\end{eqnarray*}
\end{theo}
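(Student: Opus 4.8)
The plan is to follow the scheme used for Theorem \ref{fullexpansionenergy} in Appendix B, the only genuinely new point being the terms of $I_\e$ built on $C(W)$ and $C(W+\phi)$; by \eqref{repcs1} and \eqref{cw} these enter only at order $\e^2\de^{-2}=O(\de^2)$, and are therefore compatible with the claimed expansion. First we split
$$\ml{E}^\e(\de,\xi)=I_\e(W)+r^\e(\de,\xi),\qquad r^\e(\de,\xi):=I_\e(W+\phi)-I_\e(W),$$
and insert the expansion of $I_\e(W)$ from Theorem \ref{energyexpansion}; it then remains to control $r^\e$, $\grad r^\e$ and $\fr_{\de\de}r^\e$, the $\grad\varphi_m$-independent higher-order contributions being understood as absorbed into the $o(\de^2)$ term.

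For the $C^0$ bound we Taylor-expand $I_\e$ at $W$ in the direction $\phi$. As in the mean-field case, the structure of $R^\e$ and of the linearization $L^\e$ gives $DI_\e(W)[\phi]=-\int_T R^\e\phi$; moreover, since $\phi$ solves \eqref{ephit} up to the linear combination $c_0\Delta PZ+\sum_{i,j}c_{ij}\Delta PZ_{ij}$ and $\phi$ is $L^2$-orthogonal to each of these functions, one gets $D^2I_\e(W)[\phi,\phi]=\int_T(R^\e+N^\e(\phi))\phi$, while the third variation of $I_\e$ along $W+t\phi$ is $O(\|\phi\|_\infty^3)$; using $\|N^\e(\phi)\|_*=O(\|\phi\|_\infty^2)$ and $\|\phi\|_\infty\le C|\log\de|\,\|R^\e\|_*$ this yields $|r^\e|\le C(\|R^\e\|_*\|\phi\|_\infty+\|\phi\|_\infty^3)\le C|\log\de|\,\|R^\e\|_*^2$. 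By \eqref{ree} and Proposition \ref{lpnlacs} the last quantity is $\le C|\log\de|\,(\de|\grad\varphi_m(\xi)|_g+\de^{2-\sigma})^2$, which after a Young inequality applied to the cross term becomes $\le C\de^2|\log\de|\,|\grad\varphi_m(\xi)|_g^2+O(\de^{4-2\sigma}|\log\de|)$, the last summand being $o(\de^2)$.

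For the derivatives we differentiate the splitting, e.g. $\fr_\xi r^\e=\big(DI_\e(W+\phi)-DI_\e(W)\big)[\fr_\xi W]+DI_\e(W+\phi)[\fr_\xi\phi]$, and analogously for $\fr_\de$ and $\fr_{\de\de}$. The terms $DI_\e(W+\phi)[\fr_\xi\phi]$, $DI_\e(W+\phi)[\fr_\de\phi]$, $DI_\e(W+\phi)[\fr_{\de\de}\phi]$ are controlled by observing that $DI_\e(W+\phi)[\psi]=-c_0\int_T\psi\,\Delta PZ-\sum_{i,j}c_{ij}\int_T\psi\,\Delta PZ_{ij}$ for every $\psi\in\bar H$, with $|c_0|+\sum_{i,j}|c_{ij}|=O(\|R^\e\|_*)$, and by differentiating the orthogonality relations $\int_T\phi\,\Delta PZ=\int_T\phi\,\Delta PZ_{ij}=0$ in $(\de,\xi)$ to bound $\int_T\fr\phi\,\Delta PZ$ and the like in terms of $\|\phi\|_\infty$; the remaining first- and second-order differences are estimated along $W+t\phi$ as in the $C^0$ case. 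Inserting the derivative bounds for $\phi$ from Proposition \ref{lpnlacs}, the estimate \eqref{ree}, and the expansions of $C(W),C(W+\phi)$ and their $(\de,\xi)$-derivatives --- which by \eqref{repcs1} enter at orders $O(\e^2\de^{-2})$, $O(\e^2\de^{-3})$, $O(\e^2\de^{-4})$, i.e.\ $o(\de^2)$, $o(\de)$, $o(1)$ up to powers of $|\log\de|$, exactly as in the proofs of Theorems \ref{expansionenergy} and \ref{energyexpansion} --- one recovers the stated bounds on $\grad r^\e$ and $\fr_{\de\de}r^\e$. The main obstacle is the $C^2$-in-$\de$ estimate: there the $\de^{-1}|\log\de|^3|\grad\varphi_m(\xi)|_g$ and $\de^{-\sigma}|\log\de|^3$ losses in $\fr_{\de\de}\phi$ coming from Proposition \ref{lpnlacs} must be matched against the quadratic smallness of the second variation of $I_\e$ along $W+t\phi$ and against the $|\log\de|$-weighted a priori estimates, the $\Lambda^\e$- and $C(W+\phi)$-corrections being carried along exactly as in the mean-field treatment of Appendix B.
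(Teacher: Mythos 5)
Your argument is correct and is essentially the route the paper intends: Theorem \ref{fullexpansionenergycs} is stated there without a written proof, as the direct analogue of Theorem \ref{fullexpansionenergy}, and what you write is precisely the Appendix B scheme transplanted to $I_\e$ (Taylor expansion at $W$, the identities $DI_\e(W)[\phi]=-\int_T R^\e\phi$ and $D^2I_\e(W)[\phi,\phi]=-\int_T L^\e(\phi)\phi$ together with the orthogonality in Proposition \ref{lpnlacs}, the estimate \eqref{ree}, and Theorem \ref{energyexpansion} for the leading part, with the $\Lambda^\e$- and $C(W+\phi)$-corrections controlled through \eqref{repcs1}). Your organization of the $C^1$/$C^2$ bounds via $DI_\e(W+\phi)[\psi]=-c_0\int_T\psi\,\Delta PZ-\sum_{i,j}c_{ij}\int_T\psi\,\Delta PZ_{ij}$ and the differentiated orthogonality relations is only a mild repackaging of the same derivative estimates carried out in Appendix B.
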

\begin{proof}[{\bf Proof (of Theorem \ref{main3}):}]
Similarly to Theorem \ref{main2}, to find a critical point of
$\ml{E}^\e(\de,\xi)$ the key step is to get the existence of a function
$\de=\de(\e,\xi)=\sqrt{\e}\mu(\e,\xi)$ such that
$\fr_\de\ml{E}^\e(\de(\e,\xi),\xi)=0$ in a small neighborhood of
the critical set $\DD$. Even if $A(\xi) \geq 0$ for all $\xi \in (T\setminus \{p_1,\dots,p_l\})^m \setminus \Delta$, this is possible in view of $\ti B(\xi)>0$ and ``the correct sign'' $B(\xi)<0$ in
$\DD$. The argument is based on the same one explained in Remark \ref{minmax}-(ii) and uses the crucial smallness property of $A(\xi)$ near $\DD$: $A(\xi)=O(|\nabla \varphi_m(\xi)|_g^2)$.
\end{proof}


\section{Appendix A}
\noindent We consider the operator
$$L_{8\pi m}(\phi) = \Delta_g \phi +  {8\pi m ke^{W}\over\int_S
ke^{W}dv_g}\lf(\phi - {\int_{S} ke^{W}\phi dv_g \over\int_S
ke^{W}dv_g} \rg),$$
for which we first address a-priori estimates when all the $c_{ij}$'s vanish:
\begin{prop} \label{p1}
There exists $\delta_0>0$ and $C>0$ so that, for all $0<\delta\leq
\delta_0$, $h\in C(S)$ with $\int_Sh dv_g=0$, $\xi \in \Xi$ and
$\phi \in H_0^1(S) \cap W^{2,2}(S)$ a solution of \eqref{plco}
with $L=L_{8\pi m}$ and $c_0=c_{ij}=0$, $i=1,2$ and $j=1,\dots,m$,
one has
\begin{equation}\label{estmfe}
\|\phi \|_\infty \le C | \log \de | \|h\|_*.
\end{equation}
\end{prop}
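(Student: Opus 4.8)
The plan is to argue by contradiction using a blow-up (concentration-compactness type) analysis, which is by now standard for linearized Liouville problems. Suppose the estimate fails: then there are sequences $\delta_n\to 0$, $\xi_n\in\Xi$, $h_n\in C(S)$ with $\int_S h_n\,dv_g=0$, and solutions $\phi_n\in H^1_0(S)\cap W^{2,2}(S)$ of $L_{8\pi m}(\phi_n)=h_n$ (with all $c_0=c_{ij}=0$, and the orthogonality conditions $\int_S \phi_n\Delta_g PZ\,dv_g=\int_S\phi_n\Delta_g PZ_{ij}\,dv_g=0$) such that $\|\phi_n\|_\infty=1$ while $|\log\delta_n|\,\|h_n\|_*\to 0$. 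Up to a subsequence we may assume $\xi_n\to\xi_*\in\tilde S^m\setminus\Delta$ (using that $\Xi$ is compact in $\tilde S^m\setminus\Delta$ by the separation and $k\ge r_0$ conditions).

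The first step is the \emph{inner analysis near each bubble}. Fix $j\in\{1,\dots,m\}$ and rescale: set $\hat\phi_{n,j}(y)=\phi_n(y_{\xi_{n,j}}^{-1}(\delta_{n,j}y))$ for $y\in B_{r_0/\delta_{n,j}}(0)$. Using the expansion \eqref{important} for $\frac{8\pi m\,ke^W}{\int_S ke^W dv_g}$, one checks that in the rescaled variables the equation $L_{8\pi m}(\phi_n)=h_n$ becomes $\Delta\hat\phi_{n,j}+\frac{8}{(1+|y|^2)^2}\hat\phi_{n,j}+(\hbox{lower order})=\hbox{small}$, where the ``small'' right-hand side is controlled by $\|h_n\|_*$ times a locally integrable weight and the extra terms vanish because of $\delta_{n,j}|y|$-factors and $\nabla\varphi_n$-factors. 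Since $\|\hat\phi_{n,j}\|_\infty\le 1$, elliptic estimates give local $C^{1,\alpha}$ bounds, so $\hat\phi_{n,j}\to\hat\phi_{*,j}$ in $C^1_{\mathrm{loc}}(\mathbb R^2)$, where $\hat\phi_{*,j}$ is a bounded solution of $\hat L(\hat\phi_{*,j})=0$ — more precisely, because of the ``nonlocal mean'' term one gets $\Delta\hat\phi_{*,j}+\frac{8}{(1+|y|^2)^2}(\hat\phi_{*,j}-\bar c_j)=0$ for some constant $\bar c_j$, which after absorbing the constant is exactly $\hat L$ applied to $\hat\phi_{*,j}-\bar c_j$ up to normalization. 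By the classification of the kernel of $\hat L$ recalled in the excerpt (spanned by $1$ and $Y_0,Y_1,Y_2$), $\hat\phi_{*,j}$ is a linear combination of $1,Y_0,Y_1,Y_2$. The orthogonality conditions $\int_S\phi_n\Delta_g PZ\,dv_g=\int_S\phi_n\Delta_g PZ_{ij}\,dv_g=0$, together with the facts that $\Delta_g PZ$ and $\Delta_g PZ_{ij}$ behave like $\chi_j\Delta Z$ and $\chi_j\Delta Z_{ij}$ (whose rescalings converge to $\frac{8}{(1+|y|^2)^2}Y_0$ and $\frac{8}{(1+|y|^2)^2}Y_i$), pass to the limit and force the coefficients of $Y_0,Y_1,Y_2$ in $\hat\phi_{*,j}$ to vanish; hence $\hat\phi_{*,j}\equiv$ const. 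Summing the limiting relations over $j$ (and using $\int_S\phi_n\,dv_g=0$) one shows this common constant is $0$, so $\hat\phi_{*,j}\equiv 0$ for every $j$, i.e. $\phi_n\to 0$ uniformly on each fixed ball $B_{R\delta_{n,j}}(\xi_{n,j})$.

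The second step is the \emph{outer estimate}: show $\phi_n\to 0$ uniformly on all of $S$, contradicting $\|\phi_n\|_\infty=1$. Write $\phi_n$ via the Green representation for $-\Delta_g$ on $\bar H$: $\phi_n(x)=\int_S G(x,z)\big[-L_{8\pi m}(\phi_n)(z)+\frac{8\pi m ke^W}{\int_S ke^W}(\phi_n-\overline{\phi_n})\big](z)\,dv_g(z)$ where the bracket equals $-h_n+\frac{8\pi m ke^W}{\int_S ke^W}(\phi_n-\overline{\phi_n})$ and $\overline{\phi_n}=\frac{\int_S ke^W\phi_n dv_g}{\int_S ke^W dv_g}$. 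The potential $V_n:=\frac{8\pi m ke^W}{\int_S ke^W dv_g}$ is, by \eqref{important} and \eqref{ikeW}, comparable to $\sum_j\chi_j e^{U_j}$ plus $O(\delta_n^2)$; its mass $\int_S V_n\,dv_g\to 8\pi m$ concentrates at the $\xi_{*,j}$'s. One estimates $\|G(x,\cdot)V_n\|_{L^1}$ and, crucially, exploits the logarithmic gain: the building block $e^{U_j}$ has $\int e^{U_j}\,dv_g=O(1)$ but is concentrated on scale $\delta_{n,j}$, so $\int_S |G(x,z)|\,V_n(z)\,dv_g(z)\le C$ while the $L^\infty$ contribution of $h_n$ through $G$ is $\le C|\log\delta_n|\,\|h_n\|_*\to 0$ by the definition of $\|\cdot\|_*$ (the $|\log\delta_n|$ comes from integrating the singular weight against the logarithmic Green kernel). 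Combining, one gets an a priori bound of the form $\|\phi_n\|_\infty\le o(1)+\big(\sup_{x}\int_S |G(x,z)|V_n(z)dv_g\big)\,\|\phi_n\|_\infty$; since the inner analysis shows that the part of the integral coming from the concentration regions $B_{R\delta_{n,j}}(\xi_{n,j})$ contributes $o(1)\|\phi_n\|_\infty$ (there $\phi_n\to 0$), and the part away from them has $V_n$-weight $o(1)$, one concludes $\|\phi_n\|_\infty=o(1)$, the desired contradiction.

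The main obstacle is the third step's bookkeeping: controlling the \emph{nonlocal term} $V_n\overline{\phi_n}$ and the interaction between the $m$ bubbles, and extracting exactly the factor $|\log\delta_n|$ (not a worse power) from the interplay of the singular weight in $\|\cdot\|_*$ with the Green function — this requires splitting $S$ into the $m$ concentration annuli $A_{R\delta_{n,j},\rho}(\xi_{n,j})$, the cores $B_{R\delta_{n,j}}(\xi_{n,j})$, and the bulk, and estimating $G$ carefully (logarithmically) on each piece. Everything else (the rescaling, the kernel classification, passing orthogonality to the limit) is routine once the weighted-norm estimates on $G\ast(V_n\,\cdot)$ and $G\ast h_n$ are in hand; these are the computations I would carry out in detail in the appendix, following \cite{DeKM,EGP}.
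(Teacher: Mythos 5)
Your blow-up setup (contradiction, rescaling at each $\xi_j$, classification of bounded kernel elements, passing orthogonality to the limit) matches the paper's strategy, but there is a genuine gap at the decisive point: you claim that the orthogonality conditions force the coefficient of $Y_0$ in each limit $\hat\phi_{*,j}$ to vanish. They do not. The constraints in \eqref{plco} are $\int_S\phi\,\Delta_g PZ_{ij}\,dv_g=0$ only for $i=1,2$ (these indeed kill $a_{1j},a_{2j}$), plus a \emph{single} condition against $\Delta_g PZ$ with $Z=\sum_{l=1}^m Z_{0l}$; in the limit this yields only the one scalar relation $\sum_{j=1}^m a_{0j}=0$ (using $\int_{\R^2}\frac{1-|y|^2}{(1+|y|^2)^3}dy=0$), so for $m\ge2$ the individual $Y_0$-components are not controlled by orthogonality, and your conclusion $\hat\phi_{*,j}\equiv\mathrm{const}$ does not follow. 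This is exactly where the paper has to work harder: first a Green-representation identity evaluated at the points $\xi_j$, combined with $\sum_j a_{0j}=0$, shows that the limiting constant $c_0=-\lim\frac{\int_S ke^W\phi}{\int_S ke^W}$ vanishes; then, to get each $a_{0j}=0$ separately, one tests the equation (not the orthogonality) against the extra functions $PZ_j$ built from $\beta_j(y)=\frac43\big[2\log\delta_j+\log(1+|y|^2)\big]\frac{1-|y|^2}{1+|y|^2}+\frac83\frac{1}{1+|y|^2}$, which satisfy $e^{\varphi_j}\Delta_g Z_j+e^{U_j}Z_j=e^{U_j}Z_{0j}$ (the trick from \cite{EGP}); the $\log\delta_j$ weight is what singles out each bubble. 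This ingredient is absent from your proposal and cannot be replaced by the stated orthogonality.

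A secondary issue is your outer step: the absorption inequality $\|\phi_n\|_\infty\le o(1)+\big(\sup_x\int_S|G(x,z)|V_n(z)\,dv_g\big)\|\phi_n\|_\infty$ cannot close, because $\int_S|G(x,z)|V_n(z)\,dv_g\approx 8\pi\sum_jG(x,\xi_j)$ is of order one away from the bubbles and of order $|\log\delta|$ near them, so the coefficient in front of $\|\phi_n\|_\infty$ is not small; moreover the neck regions $B_r(\xi_j)\setminus B_{R\delta_j}(\xi_j)$ carry $V_n$-mass $O(R^{-2})$ against a Green kernel of size $|\log\delta|$, which is not $o(1)$ for fixed $R$ while the inner convergence $\phi_n\to0$ is only available on $B_{R\delta_j}(\xi_j)$ for fixed $R$. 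The paper bridges this by a maximum-principle (barrier) argument for the operator $e^{\hat\varphi_j}\Delta+\mathcal{\hat K}_j$ on the annuli $B_r(0)\setminus B_{R\delta_j}(0)$, following \cite{DeKM}, after the inner limits and the constant have been shown to vanish; some ingredient of this type is needed in place of the naive Green-kernel bound.
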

\begin{proof}[\dem] By contradiction, assume the existence of sequences $\de \to 0$, points $\xi \in \Xi$ with $\xi \to \xi^*$, functions $h$ with $|\log \de| \|h\|_*=o(1)$ and solutions $\phi$ with $\|\phi\|_\infty=1$. Recall that $\de_j^2=\de^2 \rho_j(\xi_j)$. Setting $\mathcal{K}=\frac{8\pi m ke^{W}}{\int_S ke^{W}dv_g}$ and $\psi=\phi-\frac{\int_S ke^{W}\phi dv_g}{\int_S
ke^{W}dv_g}$, we have that $\Delta_g \psi+\mathcal{K} \psi=h$ in $S$ and $\psi$ does satisfy the same orthogonality conditions as $\phi$.

\medskip \noindent Since $\|\psi_n\|_\infty\le 2\|\phi_n\|_\infty \le 2$ and $\Delta_g\psi =o(1)$ in $C_{\hbox{loc}}(S \setminus \{\xi_1^*,\dots, \xi_m^*\})$, we can assume that $\psi \to \psi_\infty$ in $C^1_{\hbox{loc}}(S \sm\{\xi_1^*,\dots,\xi_m^*\})$. Since $\psi_\infty$ is bounded, it extends to an harmonic function in $S$, and then $\psi_\infty=c_0:= -\lim \frac{\int_S k e^W \phi dv_g}{\int_S k e^W dv_g}$ in view of ${1\over |S|}\int_S \psi dv_g=-\frac{\int_S k e^W \phi dv_g}{\int_S k e^W dv_g}$.

\medskip \noindent The function $\Psi_j =\psi(y_{\xi_j}^{-1}(\delta_j y))$ satisfies $\lap \Psi_j + \mathcal{\tilde K}_j \Psi_j = \tilde h_j$ in $B_{2r_0 \over \de_j}(0)$, where $\mathcal{\tilde K}_j=\de_j^2 \mathcal{K}(y_{\xi_j}^{-1}(\de_j y))$ and $\tilde
h_j=\de_j^2 h (y_{\xi_j}^{-1}(\de_j y))$. Since $|\ti h_j| \le C\|h \|_*$ and $\mathcal{\tilde K}_j={8\over (1+|y|^2)^2}(1+O(\delta^2|\log \delta|))$ uniformly in $B_{\frac{2r_0}{\delta}}(0)$ in view of Lemma \ref{ewfxi} and (\ref{intkeW}), up to a sub-sequence, by elliptic estimates $\Psi_j \to \Psi_{j,\infty}$ in $C^1_{\hbox{loc}}(\mathbb{R}^2)$, where $\Psi_{j,\infty}$ is a bounded solution of $\Delta \Psi_{j,\infty} + {8\over(1+|y|^2)^2}\Psi_{j,\infty}= 0$ of the form $\Psi_{j,\infty}=\displaystyle \sum_{i=0}^2 a_{ij}Y_i$ (see for example \cite{bp}). Since $-\Delta_g PZ_{ij} =\chi_j e^{-\varphi_j} e^{U_j} Z_{ij}-\frac{1}{|S|}\int_S
\chi_j  e^{-\varphi_j} e^{U_j} Z_{ij} dv_g$ in view of (\ref{ePZ}) and $\Delta_g=e^{-\varphi_j} \Delta$ in $B_{2r_0}(\xi_j)$ through $y_{\xi_j}$, we have that
$$0=-\int_S \psi \Delta_g PZ_{ij}=32 \int_{\mathbb{R}^2} \Psi_j \frac{y_i}{(1+|y|^2)^3} dy
-\frac{32}{|S|} \int_{\mathbb{R}^2} \frac{y_i}{(1+|y|^2)^3} dy \int_S \psi_n +O(\de^3).$$
Since then $\int_{\mathbb{R}^2} \Psi_j \frac{y_i}{(1+|y|^2)^3} dy=0$, we deduce that $a_{1j}=a_{2j}=0$. By the other orthogonality condition $\int_S \psi \Delta_g PZ=0$ similarly we deduce that
$$0=-\sum_{j=1}^m\int_S \psi \Delta_g PZ_{0j}dv_g=16 \int_{\mathbb{R}^2} \Psi_j \frac{1-|y|^2}{(1+|y|^2)^3} dy
-\frac{16}{|S|} \int_{\mathbb{R}^2} \frac{1-|y|^2}{(1+|y|^2)^3} dy \int_S \psi_n
+O(\de^2),$$
which implies $\displaystyle \sum_{j=1}^m a_{0j}=0$ in view of $\int_{\mathbb{R}^2} \frac{1-|y|^2}{(1+|y|^2)^3} dy=0$. By dominated convergence we have that
\begin{eqnarray*}
&&\int_S G(y,\xi_j) \mathcal{K} \psi dv_g=  -{1\over 2\pi } \log \de \int_{B_{r_0}(\xi_j)}  \mathcal{K} \psi dv_g+\int_{\mathbb{R}^2} \Big[-{1\over 2\pi }\log |y|+H(\xi_j,\xi_j) \Big] \frac{8}{(1+|y|^2)^2} \Psi_{j,\infty} dy\\
&&+ \sum_{i\not=j} G(\xi_i,\xi_j)  \int_{\mathbb{R}^2} \frac{8}{(1+|y|^2)^2} \Psi_{j,\infty} dy+o(1)=
-{1\over 2\pi } \log \de \int_{B_{r_0}(\xi_j)}  \mathcal{K} \psi dv_g+4 a_{0j}+o(1)
\end{eqnarray*}
in view of $\int_{\mathbb{R}^2} \log |y| \frac{1-|y|^2}{(1+|y|^2)^3} dy=-\frac{\pi}{2}$. In view of $\int_S \ml{K} \psi=0$ and
$$\bigg|\int_S G(y,\xi_j) h dv_g \bigg| \le C |\log\de | \int_S |h| dv_g+\frac{\|h\|_*}{\delta^2}\bigg|\int_{B_\de(\xi_j)} G(y,\xi_j) dv_g\bigg|\leq C' |\log \delta|\|h\|_*=o(1),$$
by the Green's representation formula
$$\sum_{j=1}^m  \Psi_j(0)= \sum_{j=1}^m \psi (\xi_j)={m \over |S|}\int_S \psi dv_g + \sum_{j=1}^m \int_S G(y,\xi_j) [ \mathcal{K} \psi- h ] dv_g=m c_0+4 \sum_{j=1}^m a_{0j}+o(1)$$
which gives $\displaystyle \sum_{j=1}^m a_{0j}= m c_0+4 \displaystyle \sum_{j=1}^m a_{0j}$ as $n \to +\infty$. Since $\displaystyle \sum_{j=1}^m a_{0j}=0$, we get that $c_0=0$.\\
Following \cite{EGP}, let $PZ_j \in H_0^1(S)$ be s.t. $\Delta_g P Z_j =\chi_j \Delta_g Z_j -\frac{1}{|S|}\int_S
\chi_j \Delta_g Z_j dv_g$ in $S$, where
$$Z_j(x)=\beta_j\Big(\frac{y_{\xi_j}(x)}{\delta_j}\Big)\,,\qquad \beta_j(y)={4\over 3}[2\log \delta_j+\log (1 + |y|^2 )]\frac{1
- |y|^2}{1 + |y|^2} + {8\over 3} \frac{1}{1+ |y|^2}$$
satisfies $e^{\varphi_j}\Delta_g Z_j+e^{U_j} Z_j=e^{U_j}Z_{0j}$ in $B_{2r_0}(\xi_j)$. Since it is easily seen that $ PZ_j= \chi_j Z_j +{16 \pi \over 3}H(\cdot, \xi_j) +O(\de^2 |\log \de |^2)$ uniformly in $S$, we test the equation of $\psi$ against $PZ_j$ to get:
\begin{eqnarray*}
&&\int_S \psi \bigg[\chi_j \Delta_g Z_j -\frac{1}{|S|}\int_S
\chi_j \Delta_g Z_j dv_g \bigg]dv_g + \int_S \mathcal{K} \psi P
Z_j dv_g= \int_S \chi_j \psi [e^{\varphi_j} \Delta_g Z_j
+ \mathcal{K} Z_j]dv_g+o(1)\\
&&=\int_S \chi_j \psi e^{U_j}Z_{0j} dv_g+o(1)=16 \int_{\mathbb{R}^2} \Psi_j \frac{1-|y|^2}{(1+|y|^2)^3}dy+o(1)
=\int_S h PZ_j=o(1)
\end{eqnarray*}
in view of $\int_S \mathcal{K} \psi dv_g=0$, $\int_S \psi dv_g=o(1)$, $\int_S \chi_j \Delta_g Z_j dv_g=O(1)$, $\int_S \chi_j \psi [\mathcal{K}-e^{U_j}]Z_j dv_g=O(\delta^2 |\log \delta|^2)$ and $\int_S h PZ_j=O(|\log \delta|\|h\|_*)=o(1)$. Since $\int_{\mathbb{R}^2} \Psi_j \frac{1-|y|^2}{(1+|y|^2)^3}dy=0$ we have that $a_{0j}=0$. So far, we have shown that $\psi \to 0$ in $C_{\hbox{loc}}(S\setminus \{\xi_1^*,\dots,\xi_m^*\})$ and uniformly in $\cup_{j=1}^m B_{R \delta_j}(\xi_j)$, for all $R>0$.

\medskip \noindent Setting $\hat \psi_j(y)=\psi (y_{\xi_j}^{-1}(y))$, $\mathcal{\hat
K}_j(y)=\mathcal{K} (y_{\xi_j}^{-1}(y))$ and $\hat h_j(y)=h(y_{\xi_j}^{-1}(y))$ for $y \in B_{2r_0}(0)$, we have that $
e^{\hat \varphi_j} \Delta \hat \psi_j + \mathcal{\hat K}_j \hat \psi_j=\hat h_j$. By now it is rather standard to show that the
operator $\hat L_j=e^{\varphi_j} \Delta + \mathcal{\hat K}_j$ satisfies the maximum principle in $
B_r(0) \sm B_{R\delta_j}(0)$ for $R$ large and $r>0$ small enough, see for example \cite{DeKM}. As a consequence, we get that $\psi \to 0$ in $L^\infty(S)$. Since $\frac{\int_S ke^{W}\phi dv_g}{\int_S ke^{W}dv_g}\to c$ along a sub-sequence, $\|\psi \|_\infty \to 0$ implies $\phi \to c$ in $L^\infty(S)$ with $c=0$ in view of $\int_S \phi=0$, in contradiction with $\|\phi\|_\infty=1$. This completes the proof.
\end{proof}
\noindent We are now ready for
\begin{proof}[{\bf Proof (of Proposition \ref{p2}):}] Since $\|\lap_g PZ_{ij}\|_*\le C$ for all $i=0,1,2$, $j=1,\dots,m$, and
$$\bigg\|(\lambda-8\pi m) { ke^{W}\over\int_S
ke^{W}dv_g}\lf(\phi - {\int_{S} ke^{W}\phi dv_g \over\int_S
ke^{W}dv_g} \rg)\bigg\|_*=O(|\lambda-8\pi m| \|\phi\|_\infty),$$
by Proposition \ref{p1}  for $\lambda$ close to $8\pi m$ any
solution of \equ{plco} satisfies
$$\|\phi\|_\infty\le
C |\log \de| \lf[\|h\|_*+|c_0|+\sum_{i=1}^2\sum_{j=1}^m |c_{ij}|\rg].$$
To estimate the values of the $c_{ij}$'s, test equation \equ{plco} against $PZ_{ij}$, $i=1,2$ and $j=1,\dots,m$:
$$\int_S \phi L(PZ_{ij})dv_g =\int_S h PZ_{ij}dv_g+c_{0}\sum_{l=0}^m \int_S  \lap_g
PZ_{0l} PZ_{ij}dv_g + \sum_{k=1}^2\sum_{l=1}^m c_{kl} \int_S \lap_g
PZ_{kl} PZ_{ij}dv_g.$$
Since for $j=1,\dots,m$ we have the following estimates in $C(S)$
\begin{equation}\label{pzij}
PZ_{ij}=\chi_jZ_{ij}+O(\de)\,,\:\:\:i=1,2\,, \qquad  PZ_{0j}=\chi_j(Z_{0j}+2)+O(\de^2|\log\de|),
\end{equation}
it readily follows that $\int_S \lap_g PZ_{kl} PZ_{ij}dv_g=-{32\pi\over 3}\delta_{ki}\delta_{lj}+O(\de)$, where the $\delta_{ij}$'s are the Kronecker's symbols. By Lemma \ref{ewfxi}, (\ref{repla1}), (\ref{intkeW}) and \eqref{pzij} we have that for $i=1,2$
$$L(PZ_{ij})=\chi_j \Delta_g Z_{ij}+e^{U_j} PZ_{ij}+O\Big(\delta^2 +\delta \sum_{k=1}^m e^{U_k}\Big)=
e^{U_j} [PZ_{ij}-e^{-\varphi_j }\chi_j
Z_{ij}]+O\Big(\delta^2+\delta \sum_{k=1}^m e^{U_k}\Big)$$ in view
of $\frac{\int_S k e^W PZ_{ij}dv_g}{\int_S k e^W dv_g}=O(\delta)$,
leading to $\|L(PZ_{ij})\|_*=O(\delta)$. Similarly, we have that
\begin{eqnarray*}
L(PZ)&=&\sum_{j=0}^m [\chi_j \Delta_g Z_{0j}+e^{U_j} PZ_{0j}- \frac{2}{m}\sum_{k=1}^m \chi_k  e^{U_k} ]+O(\delta^2)+O\bigg(\delta \sum_{k=1}^m e^{U_k}\bigg)\\
&=&\sum_{j=0}^m e^{U_j} [PZ_{0j}-\chi_j e^{-\varphi_j } Z_{0j}-
2\chi_j ]+O(\delta^2)+O\bigg(\delta \sum_{k=1}^m e^{U_k}\bigg)
\end{eqnarray*}
in view of $\frac{\int_S k e^W PZ_{0j}dv_g}{\int_S k e^W dv_g}=\frac{2}{m}+O(\delta^2|\log \delta|)$, leading to $\|L(PZ)\|_*=O(\delta)$.
Hence, we get that
$$|c_0|+\sum_{i=1}^2 \sum_{j=1}^m
|c_{ij}| \leq C \|h\|_*+\delta
O\Big(\|\phi\|_\infty+|c_0|+\sum_{i=1}^2 \sum_{j=1}^m
|c_{ij}|\Big) \leq C' \|h\|_*+\delta |\log
\delta|O\Big(|c_0|+\sum_{i=1}^2 \sum_{j=1}^m |c_{ij}|\Big),$$
yielding to the desired estimates $\|\phi\|_\infty=O(|\log \delta|
\|h\|_*)$ and $|c_0|+\displaystyle \sum_{i=1}^2 \sum_{j=1}^m
|c_{ij}|=O(\|h\|_*)$. To prove the solvability assertion, problem
\eqref{plco} is equivalent to finding $\phi\in H$ such that
$$ \int_S \langle\nabla \phi, \nabla \psi\rangle_g dv_g=\int_S \lf[{\la ke^W\over \int_S ke^W dv_g}\lf(\phi-{\int_S ke^W \phi dv_g \over \int_S ke^W dv_g}\rg)-h\rg]\psi dv_g\qquad \forall \, \psi \in H,$$
where $H=\{\phi \in H_0^1(S) \,:\: \int_S \phi \lap_g PZ_{ij} dv_g=\int_S \phi \lap_g PZ dv_g=0,\,
i=1,2,\, j=1,\dots,m \}$. With the aid of Riesz representation theorem, the Fredholm's alternative guarantees unique solvability for any $h$ provided that the homogeneous equation has only the trivial solution: for \eqref{plco} with $h=0$, the a-priori estimate (\ref{estmfe1}) gives that $\phi=0$.

\medskip \noindent So far, we have seen that, if $T(h)$ denotes the unique solution $\phi$ of \eqref{plco}, the operator $T$ is a continuous linear map from $\{h \in L^\infty(S):\, \int_S h dv_g =0 \}$, endowed with the $\|\cdot\|_*$-norm, into $\{\phi \in L^\infty(S):\, \int_S \phi dv_g =0 \}$, endowed with $\|\cdot\|_\infty$-norm. The argument below is heuristic but can be made completely rigourous. The operator $T$ and the coefficients $c_0,\,c_{ij}$ are differentiable w.r.t. $\xi_{l}$, $l=1,\dots,m$, or $\de$. Differentiating equation \eqref{plco}, we formally get that $X=\fr_\beta \phi$, where $\beta=\xi_{l}$ with $l=1,\dots,m$ or $\beta=\de$, satisfies $L(X)=\ti h(\phi)+d_0\lap_g PZ+\sum_{i,j} d_{ij}\lap_g PZ_{ij}$, where
\begin{eqnarray*}
\ti h(\phi)&=& -\fr_\beta\lf({\la ke^W\over \int_S k
e^W dv_g}\rg)\phi+\fr_\beta\lf[{\la ke^W\over \lf(\int_S k
e^W dv_g \rg)^2}\rg]\int_S ke^W\phi dv_g+{\la ke^W\over \lf(\int_S k
e^W dv_g \rg)^2}\int_S ke^W \fr_\beta
W \phi dv_g\\
&& + c_{0}\fr_\beta(\lap_g
PZ)+\sum_{i,j}c_{ij}\fr_\beta(\lap_g PZ_{ij})
\end{eqnarray*}
and $d_0=\fr_\beta c_0$, $d_{ij}=\fr_\beta c_{ij}$, and the
orthogonality conditions  become
$$\int_S X \lap_g PZ_{ij}dv_g =-\int_S \phi  \fr_\beta(\lap_g
PZ_{ij}) dv_g\,, \qquad \int_S X \lap_g PZ dv_g=-\int_S \phi \fr_\beta(\lap_g PZ)dv_g.$$
Find now coefficients $b_0$, $b_{ij}$ so that $Y=X+b_0 PZ+ \sum_{k,l} b_{kl}PZ_{kl}$ satisfies the orthogonality conditions $\int_S Y \lap_g PZ dv_g=\int_S Y \lap_g PZ_{ij}dv_g=0$. The coefficients $b_0,\,b_{ij}$ have to satisfy an almost diagonal system, and are then well-defined with $|b_0|+\displaystyle \sum_{ij} |b_{ij}| \le C {|\log \de| \over \de} \|h\|_*$ in view of
$\|\fr_\beta(\lap_g PZ_{ij})\|_*\le {C\over \de}$. Hence, the function $X$ can be uniquely expressed as $X=T(f)-b_0PZ-
\sum_{i,j} b_{ij}PZ_{ij}$, where $f=\ti h(\phi)+b_0L(PZ)+\sum_{i,j} b_{ij}L(PZ_{ij})$. Moreover, since $\|\fr_\beta
W \|_\infty+\|\fr_\beta \mathcal{K } \|_*\le{C\over\de}$, $\|\mathcal{K }  \|_*\le C$ and $\|\fr_\beta [{ \mathcal{K} \over \int_S ke^W dv_g } ] \|_*\le {C\over\de}(\int_S ke^W dv_g)^{-1}$ we find that
$$\|f\|_*\le \|\ti
h(\phi)\|_*+|b_0|\,\|L(PZ)\|_*+\sum_{i,j} |b_{ij}|\,\|L(PZ_{ij})\|_* \le C
{|\log \de|  \over \de} \|h\|_*,$$
and by (\ref{estmfe1}) we deduce that for any first derivative
$$\|\fr_\beta \phi\|_\infty \le C \Big[|\log \de|\|f\|_*+{\|\phi\|_\infty \over \de}\Big] \le C' {|\log \de|^2 \over\de} \|h\|_*.
$$
Differentiating once more in $\delta$ the equation satisfied by $\fr_\de \phi$ and arguing as above, we finally obtain that
$\|\fr_{\de\de} \phi \|_\infty \le C {|\log \de|^3 \over\de^2} \|h\|_*$, and the proof is complete.
\end{proof}


\section{Appendix B}
\noindent By Proposition \ref{p2} we now deduce the following.
\begin{proof}[{\bf Proof (of Proposition \ref{lpnlabis}):}] In terms of the operator $T$, problem \eqref{pnlabis} takes the form $\ml{A}(\phi)=\phi$, where $\ml{A}(\phi):=-T(R+N(\phi))$. Given $\nu>0$, let us consider the space
$$\ml{F}_\nu=\left \{\phi\in C(S)\,:\: \|\phi\|_\infty \le \nu
\bigg[\delta |\log \delta| \sum_{j=1}^m |\nabla \log(\rho_j \circ
y_{\xi_j}^{-1})(0)|+\de^{2-\sigma}|\log \de|^2 \bigg] \right \}.$$
Notice that if $\phi\in\ml{F}_\nu$ then $W+\phi\in\ml{A}_\e$ for
$\de$ and $\e$ small enough. Since in view of Lemma \ref{ewfxi}
and (\ref{intkeW}) we have
\begin{eqnarray*}
&& \lf\|  {\la ke^{W+\phi} \psi_1 \psi_2 \over\int_S ke^{W+\phi} dv_g } - {\la ke^{W+\phi} \psi_1 \int_S k e^{W+\phi} \psi_2 dv_g \over (\int_S ke^{W+\phi} dv_g)^2 }-{\la ke^{W+\phi} \psi_2 \int_S k e^{W+\phi} \psi_1 dv_g \over (\int_S ke^{W+\phi} dv_g)^2 }\right.\\
&& \left. -{\la ke^{W+\phi} \int_S ke^{W+\phi} \psi_1 \psi_2 dv_g \over (\int_S ke^{W+\phi} dv_g)^2 }+2{\la ke^{W+\phi} (\int_S ke^{W+\phi} \psi_1 dv_g)(\int_S ke^{W+\phi} \psi_2 dv_g) \over (\int_S ke^{W+\phi} dv_g)^3 }
\rg\|_* \le C \|\psi_1 \|_\infty \|\psi_2 \|_\infty ,
\end{eqnarray*}
 for any $\phi_1,\phi_2\in\ml{F}_\nu$ we obtain that $\|N(\phi_1)-N(\phi_2)\|_* \leq C (\| \phi_1\|_\infty+\|\phi_2)\|_\infty) \| \phi_1-\phi_2\|_\infty$ and then
$$\|\ml{A}(\phi_1)-\ml{A}(\phi_2)\|_\infty \leq C|\log\de| (\| \phi_1\|_\infty+\|\phi_2)\|_\infty) \| \phi_1-\phi_2\|_\infty \leq \frac{1}{2} \| \phi_1-\phi_2\|_\infty$$
for $\delta$ small in view of Proposition \ref{p2}. Moreover, we have that for any $\phi \in\ml{F}_\nu$
$$\|\ml{A}(\phi)\|_\infty \leq C |\log\de| (\| \phi\|^2_\infty+\|R\|_*)\leq
C |\log\de|\, \| \phi\|^2_\infty+ C_0 \left[\delta |\log \delta|
\sum_{j=1}^m |\nabla \log(\rho_j \circ
y_{\xi_j}^{-1})(0)|+\de^{2-\sigma}|\log \de|^2\right]$$ in view of
Lemma \ref{estrr0}. Then, for $\nu=2C_0$ and $\de$ small $\ml{A}$
is a contraction mapping of $\ml{F}_\nu$ into itself, and
therefore has a unique fixed point $\phi \in \ml{F}_\nu$.

\medskip \noindent By the Implicit Function Theorem it follows that the map $(\de,\xi) \to (\phi(\delta,\xi), c_0(\delta,\xi), c_{ij}(\de,\xi))$ is (at least) twice-differentiable in $\de$ and one differentiable in $\xi$. Differentiating  $\phi=-T(R+N(\phi))$ w.r.t. $\beta=\xi_l$, $l=1,\dots,m$, or $\beta=\de$, we get that $\fr_\beta\phi=-\fr_\beta T(R+N(\phi))-
T(\fr_\beta R+\fr_\beta N(\phi))$. By  Lemma \ref{estrr0} and (\ref{estd}) we have that
$$ \|\fr_\beta T(R+N(\phi))\|_\infty \le C {|\log \de|^2 \over\de} (\|R\|_*+\|N(\phi)\|_*)=O\bigg( |\log \de|^2  \sum_{j=1}^m |\nabla\log (\rho_j \circ y_{\xi_j}^{-1})(0)|+\de^{1-\sigma}|\log \de|^3 \bigg),$$
and, in view of $\|\partial_\beta W\|_\infty \leq \frac{C}{\de}$, we can estimate
\begin{eqnarray*}
&& \fr_\beta N(\phi)= N(\phi)\fr_\beta W+\la\lf({ke^{W+\phi}\over
\int_S k e^{W+\phi}dv_g}-{ke^{W}\over \int_S k e^{W}dv_g }\rg)\fr_\beta
\phi  \\
&&-\la \left({ke^{W+\phi} \int_S k e^{W+\phi} \fr_\beta
W  dv_g \over\lf(\int_S k e^{W+\phi}dv_g \rg)^2}-{ke^{W} \int_S k
e^{W} \fr_\beta W dv_g \over\lf(\int_S k
e^{W}dv_g \rg)^2}-{ke^{W}\phi \int_S k e^{W}\fr_\beta W dv_g \over\lf(\int_S k
e^{W}dv_g \rg)^2}- {ke^{W} \int_S k e^{W}\fr_\beta W \phi dv_g \over\lf(\int_S
k e^{W}dv_g \rg)^2}\right.\\
&& \left.+2{ke^{W} \lf(\int_S k e^{W}\fr_\beta
W dv_g \rg)\lf(\int_S k e^{W} \phi dv_g \rg)\over\lf(\int_S
k e^{W}dv_g \rg)^3}\right) -\la \left({ke^{W+\phi} \int_S k e^{W+\phi} \fr_\beta \phi dv_g \over\lf(\int_S k e^{W+\phi}dv_g \rg)^2}-{ke^{W} \int_S k
e^{W} \fr_\beta\phi dv_g \over\lf(\int_S k
e^{W}dv_g \rg)^2}\right)
\end{eqnarray*}
as follows
\begin{eqnarray}
\|\fr_\beta N(\phi) \|_* &\le& C\lf[\|\fr_\beta
W\|_\infty\|\phi\|_\infty^2+\|\phi\|_\infty\|\fr_\beta\phi\|_\infty\rg] \nonumber \\
&=& O\bigg(\de |\log \delta|^2 \sum_{j=1}^m |\nabla \log(\rho_j
\circ y_{\xi_j}^{-1})(0)|^2+\de^{3-2\sigma}|\log \de|^4\bigg)
+o\lf(\frac{\|\fr_\beta\phi\|_\infty}{|\log \de|}\rg).
\label{derivN}
\end{eqnarray}
Since $\int_S \chi_j e^{-\varphi_j} e^{U_j}dv_g=\int_{\mathbb{R}^2} \chi(|y|)\frac{8\de^2 \rho_j(\xi_j)}{(\de^2 \rho_j(\xi_j)+|y|^2)^2}dy$, we have that
$$\partial_{\xi_l}\bigg(\int_S \chi_j e^{-\varphi_j} e^{U_j}dv_g\bigg)= 8 \partial_{\xi_l} \log \rho_j(\xi_j) \int_{\mathbb{R}^2}  \frac{1-|y|^2}{(1+|y|^2)^3}+O(\de^2)=O(\de^2)$$
and
$$\partial_\de (\int_S \chi_j e^{-\varphi_j} e^{U_j}dv_g)=\int_{\mathbb{R}^2} \chi(|y|)\frac{16 \de \rho_j(\xi_j) (|y|^2-\de^2 \rho_j(\xi_j))}{(\de^2 \rho_j(\xi_j)+|y|^2)^3} dy=
\frac{16}{\de} \int_{\mathbb{R}^2} \frac{|y|^2-1}{(1+|y|^2)^3}dy+O(\de)=O(\de).$$
Since $\varphi_j(\xi_j)=0$ and $\nabla \varphi_j(\xi_j)=0$, we have that $e^{-\varphi_j}=1+O(|y_{\xi_j}(x)|^2)$ and $\partial_\beta(\chi_j e^{-\varphi_j}(x))=O(|y_{\xi_j}(x)|)$, and then
$$\lab \fr_\beta W=-\sum_{j=1}^m \chi_j  e^{U_j}\fr_\beta U_j+O(\de^{1-\sigma})$$
in view of $|\partial_\beta U_j|=O(\frac{1}{\de})$, where the big
$O$ is estimated in $\|\cdot \|_*$-norm. Since  in
$B_{r_0}(\xi_j)$
\begin{eqnarray*}
\partial_{\xi_l} W= \partial_{\xi_l} U_j+O(\de^2 |\log \de|+|y_{\xi_j}(x)|+ |\nabla\log(\rho_j \circ y_{\xi_j}^{-1})(0)|)\,,\qquad\partial_\de W=\partial_\de U_j-\frac{2}{\de}+O(\de |\log \de|),
\end{eqnarray*}
in the same line as Lemma \ref{ewfxi} and
$$\frac{\lambda k e^W}{\int_S ke^W dv_g}=\sum_{j=1}^m \chi_j e^{U_j} [1+O( |\nabla \log(\rho_j \circ y_{\xi_j}^{-1})(0)||y_{\xi_j}(x)|+\de^2 |\log \de|)]+O(\de^2)$$
in view of (\ref{important}), by (\ref{2term}) and (\ref{Merry}) we deduce for
$$\fr_\beta R=\lab \fr_\beta W+{\la ke^W\over\int_S
ke^W dv_g}\lf(\fr_\beta W-{\int_S ke^W \fr_\beta W\over\int_S
ke^W dv_g}\rg)$$
the estimate
$$\|\partial_\beta R\|_*=O\bigg(\sum_{j=1}^m |\nabla \log(\rho_j \circ y_{\xi_j}^{-1})(0)|+\de^{1-\sigma} |\log \delta|\bigg).$$
Combining all the estimates, we then get that
$$\|\fr_\beta\phi\|_\infty=O\bigg(|\log \delta|^2 \sum_{j=1}^m |\nabla \log(\rho_j \circ y_{\xi_j}^{-1})(0)|+\de^{1-\sigma}|\log \de|^3\bigg)
 +o\big(\|\fr_\beta\phi\|_\infty\big),$$
which in turn provides the validity of (\ref{cotadphi1bis}). We proceed in the same way to obtain the estimate (\ref{cotad2phi1bis}) on $\fr_{\de\de}\phi$, and the proof is complete.
\end{proof}
\noindent Lemma \ref{cpfc0bis} is rather standard and we will omit its proof. Since the problem has been reduced to find c.p.'s of the reduced energy $E_\lambda(\delta, \xi)= J_\lambda(W+\phi(\delta,\xi))$, where $J_\lambda$ is given by \eqref{energy}, the last key step is show that the main asymptotic term of $E_\lambda$ is given by $J_\lambda(W)$.
\begin{proof}[{\bf Proof (of Theorem \ref{fullexpansionenergy}):}] Write
\begin{eqnarray*}
J_\la(W+\phi)-J_\la(W)&=&DJ_\lambda(W)[\phi]+{ D^2
J_\lambda(W)[\phi,\phi]\over 2}+\int_0^1\!\!\! \int_0^1 [D^2
J_\lambda(W+ts \phi)-D^2J_\lambda(W)][\phi,\phi]\,t\, dsdt \\
&=&-{1\over 2}\int_S R\phi\,dv_g+{1\over 2}\int_S
N(\phi)\phi\,dv_g+\int_0^1\!\!\! \int_0^1 [D^2 J_\lambda(W+ts
\phi)-D^2J_\lambda(W)][\phi,\phi]\,t\, dsdt,
\end{eqnarray*}
since $D J_\lambda (W) (\phi)=-\int_S R \phi dv_g$, $D^2
J_\lambda(W)[\phi,\phi]= -\int_S L(\phi) \phi dv_g$ and
$$DJ_\lambda(W)[\phi]+ D^2 J_\lambda(W)[\phi,\phi]=\int_S N(\phi) \phi dv_g$$
in view of $\int_S \phi dv_g=0$ and (\ref{pnlabis}).
Since $\frac{1}{2}\int_S ke^W dv_g \leq \int_S k e^{W+ts \phi}dv_g \leq 2\int_S k e^W dv_g$ and $|e^{W+ts\phi}-e^W| \leq C e^W \|\phi\|_\infty$, it is straighforward to see that
\begin{eqnarray*}
&&\bigg|DJ_\lambda(W)[\phi]+ D^2 J_\lambda(W)[\phi,\phi]\bigg|+\bigg|\int_0^1 t dt \int_0^1 ds [D^2 J_\lambda(W+ts \phi)-D^2J_\lambda(W)][\phi,\phi] \bigg|\\
&&=O(\|N(\phi)\|_* \|\phi\|_\infty+\|\phi\|_\infty^3)=O(\|\phi\|_\infty^3),
\end{eqnarray*}
and then we deduce that
$$|J_\la(W+\phi)-J_\la(W)|=O(\|R\|_*\|\phi\|_\infty + \|\phi\|_\infty^3)
=O\left(\delta^2 |\log \delta |\, |\nabla \varphi_m(\xi)|^2+
\delta^{3-\sigma}|\log\delta|^2 \right)$$ in view of
(\ref{cotaphi1bis}) and
$4\pi\grad_{\xi_j}\varphi_m(\xi)=\grad\log(\rho_j\circ
y_{\xi_j}^{-1})(0)$. Differentiating w.r.t. $\be=\xi_{l}$,
$l=1,\dots,m$, or $\be=\de$ we get that
\begin{eqnarray*}
\partial_\beta[J_\la(W+\phi)-J_\la(W)]&=& -{1\over 2}\int_S[\fr_\beta R\,\phi + R\,\fr_\beta\phi]\,dv_g
+{1\over 2}\int_S \lf(\partial_\beta[N(\phi)] \phi +N(\phi) \partial_\beta \phi\rg)\, dv_g\\
&&
+\int_0^1 t dt \int_0^1 ds \partial_\beta \{[D^2 J_\lambda(W+ts \phi)-D^2J_\lambda(W)][\phi,\phi]\}.
\end{eqnarray*}
Since it is straightforward to see that
$$\bigg|\int_0^1 t dt \int_0^1 ds \partial_\beta \{[D^2 J_\lambda(W+ts \phi)-D^2J_\lambda(W)][\phi,\phi]\}\bigg|=
O( \|\phi\|_\infty^2  \|\partial_\beta \phi\|_\infty+\|\phi\|_\infty^3 \|\partial_\beta W\|_\infty),$$
by (\ref{derivN}) we deduce that
\begin{eqnarray*}
|\partial_\beta[J_\la(W+\phi)-J_\la(W)]|&=& O(\|\partial_\beta
R\|_* \|\phi\|_\infty + \|R\|_* \|\partial_\beta \phi\|_\infty+
\|\phi\|_\infty^2  \|\partial_\beta \phi\|_\infty+\|\phi\|_\infty^3 \|\partial_\beta W\|_\infty)\\
&=&O\lf(\big[\delta^2 |\log \delta|\, | \nabla
\varphi_m(\xi)|^2+\de^{3-\sigma}|\log \de|^2\big]{|\log\de|\over
\de}\rg)
\end{eqnarray*}
in view of (\ref{cotaphi1bis})-(\ref{cotadphi1bis}) and $\|\partial_\beta W\|_\infty=O(\frac{1}{\de})$. Arguing similarly for the second derivative in $\de$, we get that
\begin{eqnarray*}
\lf|\partial_{\de
\de}[J_\la(W+\phi)-J_\la(W)]\rg|=O\lf(\big[\delta^2 |\log
\delta|\, | \nabla \varphi_m(\xi)|^2+\de^{3-\sigma}|\log
\de|^2\big]{|\log\de|^2\over \de^2}\rg).
\end{eqnarray*}
Combining the previous estimates on the difference $J_\la(W+\phi)-J_\la(W)$ with the expansion of $J_\lambda(W)$ contained in Theorem \ref{expansionenergy}, we deduce the validity of the expansion (\ref{fullJUt}) with an error term which can be estimated (in $C^2(\mathbb{R})$ and $C^1(\Xi)$) like $o(\de^2)+r_\lambda(\de,\xi)$ as $\de \to 0$, where $r_\lambda(\de,\xi)$ does satisfy (\ref{rlambda}). \end{proof}



\begin{center}
{\bf Acknowledgements}
\end{center}
\noindent Part of this work was carried out while the second
author was visiting the Department of Mathematics, University of ``Roma Tre''. He would like to express his deep gratitude to Prof. Esposito for the many stimulating discussions about these topics
and the warm hospitality.



\small

\end{document}